\numberwithin{equation}{section}
\theoremstyle{plain}
\newtheorem{theorem}{Theorem}[section]
\newtheorem{lemma}[theorem]{Lemma}
\newtheorem{corollary}[theorem]{Corollary}
\newtheorem{proposition}[theorem]{Proposition}
\theoremstyle{definition}
\newtheorem{Def}[theorem]{Definition}
\newtheorem{example}[theorem]{Example}
\newtheorem{remark}[theorem]{Remark}
\newtheorem{?}[theorem]{Problem}
\def\inv{\mathrm{inv}}
\def\INV{\mathrm{INV}}
\def\MAJ{\mathrm{MAJ}}
\def\STAT{\mathrm{STAT}}
\def\BAST{\mathrm{BAST}}
\def\MIL{\mathrm{MIL}}
\def\F{\mathrm{F}}
\def\L{\mathrm{E}}
\def\des{\mathrm{des}}
\def\ides{\mathrm{ides}}
\def\bk{\mathrm{bk}}
\def\RGF{\mathrm{RGF}}
\def\rc{\mathrm{rc}}
\def\bMAJ{\mathrm{bMAJ}}
\def\bmajMIL{\mathrm{bmajMIL}}
\def\bmajBAST{\mathrm{bmajBAST}}
\def\bndes{\mathrm{bndes}}
\def\dinv{\mathrm{dinv}}
\def\area{\mathrm{area}}
\def\Val{\mathrm{Val}}
\def\ros{\mathrm{ros}}
\def\lb{\mathrm{lb}}
\def\ls{\mathrm{ls}}
\def\rb{\mathrm{rb}}
\def\rs{\mathrm{rs}}
\def\wrs{\mathrm{vrs}}
\def\wls{\mathrm{vls}}
\def\pl{\mathrm{pro}}
\def\Asc{\mathrm{Asc}}
\def\LL{\mathrm{L}}
\def\RR{\mathrm{R}}
\def\Des{\mathrm{Des}}
\def\Db{\mathrm{Db}}
\def\Id{\mathrm{Id}}
\def\st{\mathrm{st}}
\def\bst{\mathbf{st}}
\def\blb{\mathbf{lb}}
\def\bls{\mathbf{ls}}
\def\brb{\mathbf{rb}}
\def\brs{\mathbf{rs}}
\def\bwls{\mathbf{vls}}
\def\bwrs{\mathbf{vrs}}
\def\swap{\mathrm{swap}}
\def\suf{\mathrm{suf}}
\def\pre{\mathrm{pre}}
\def\st{\mathrm{st}}
\def\max{\mathrm{max}}
\def\min{\mathrm{min}}
\def\maj{\mathrm{maj}}
\def\SS{\mathcal{S}}
\def\RG{\mathcal{RG}}
\def\URG{\mathcal{URG}}
\def\OP{\mathcal{OP}}
\def\LD{\mathcal{LD}}
\newcommand\qbin[3]{\left[\begin{matrix} #1 \\ #2 \end{matrix} \right]_{#3}}
\def\boxit#1{\leavevmode\hbox{\vrule\vtop{\vbox{\kern.33333pt\hrule
    \kern1pt\hbox{\kern1pt\vbox{#1}\kern1pt}}\kern1pt\hrule}\vrule}}
\begin{document}

\title[From $q$-Stirling numbers to the Delta Conjecture]{From $q$-Stirling numbers to the Delta Conjecture: a viewpoint from vincular patterns
}

\author[J.~N.~Chen]{Joanna N. Chen}
\address{College of Science, Tianjin University of Technology, Tianjin 300384, P.R. China.}
\email{joannachen@tjut.edu.cn}

\author[S.~Fu]{Shishuo Fu}
\address{College of Mathematics and Statistics, Chongqing University, Chongqing 401331, P.R. China.}
\email{fsshuo@cqu.edu.cn}

\date{\today}

\begin{abstract}
The distribution of certain Mahonian statistic (called $\mathrm{BAST}$) introduced by Babson and Steingr\'{i}msson over the set of permutations that avoid vincular pattern $1\underline{32}$, is shown bijectively to match the distribution of major index over the same set. This new layer of equidistribution is then applied to give alternative interpretations of two related $q$-Stirling numbers of the second kind, studied by Carlitz and Gould. Moreover, extensions to an Euler-Mahonian statistic over ordered set partitions, and to statistics over ordered multiset partitions present themselves naturally. The latter of which is shown to be related to the recently proven Delta Conjecture. During the course, a refined relation between $\mathrm{BAST}$ and its reverse complement $\mathrm{STAT}$ is derived as well.
\end{abstract}

\keywords{set partition; ordered set partition; ordered multiset partition; permutation statistic; vincular pattern; the Delta Conjecture.}

\maketitle

\tableofcontents

\section{Introduction}\label{sec:intro}
The $q$-Stirling numbers of the second kind were first studied by Carlitz
\cite{Ca1,Ca2} and Gould\cite{Go}. Following Gould\cite{Go}, we define two related $q$-Stirling numbers of the second kind, by recursions as follows,
\begin{equation}\label{def-sq1}
S_q(n,k) = \left\{
  \begin{array}{ll}
q^{k-1}S_q(n-1,k-1)+[k]_q S_q(n-1,k) & \mbox{if $0 < k \leq n$}\\[6pt]
1 & \mbox{if $n=k=0$}\\[6pt]
0  & \mbox{otherwise,}
 \end{array} \right.
\end{equation}
and
\begin{equation}\label{def-sq2}
\widetilde{S}_q(n,k) = \left\{
  \begin{array}{ll}
\widetilde{S}_q(n-1,k-1)+[k]_q \widetilde{S}_q(n-1,k) & \mbox{if $0 < k \leq n$}\\[6pt]
1 & \mbox{if $n=k=0$}\\[6pt]
0  & \mbox{otherwise,}
 \end{array} \right.
\end{equation}
where
\begin{equation*}
  [k]_q=1+q+\cdots+q^{k-1}.
\end{equation*}
Note that
\begin{equation*}
  S_q(n,k)=q^{\binom{k}{2}} \widetilde{S}_q(n,k).
\end{equation*}

In 1978, Milne \cite{Mi} defined $S_q(n,k)$ combinatorially by introducing
certain statistic on set partitions, which we shall call $\MIL$.
Since then, many authors such as Garsia and Remmel \cite{GR}, Sagan \cite{Sa}, Wachs and White \cite{WW}, White \cite{Wh}, Ehrenborg and Readdy \cite{ER}, Wagner \cite{Wa} have studied statistics with distributions given by $S_q(n,k)$ or $\widetilde{S}_q(n,k)$. Most of them were defined on set partitions or \emph{restricted growth functions}, which are frequently used to encode set partitions.

The concept of restricted growth function was introduced in \cite{Hu,Wi} and the name was coined by Milne \cite{Mil}. Let $[n]:=\{1,2, \cdots, n\}$. Given a word $w$ in $[k]^n$, it is said to be a restricted growth function or an $\RGF$ if
$$w_1=1, \text{ and } w_i \leq \max\{w_j: 1 \leq j <i\} +1, \text{ for } 2\le i\le n.$$

Wachs and White \cite{WW} defined four inversion-like statistics on RGFs, namely, $\ls$, $\rs$, $\lb$ and $\rb$. Let $\RG(n,k)$ be the set of $\RGF$s of length $n$ with $\bk(w):=\max\{w_i:1\le i\le n\}=k$, and $\RG(n)$ be the set of all $\RGF$s of length $n$. By Milne\cite{Mi} and Wachs and White \cite{WW}, we have the following theorem.

\begin{theorem}\label{thm-WW}
For $n \geq k \geq 1$, we have
\begin{equation}\label{equ-ls}
  \sum_{w \in \RG(n,k)}q^{\ls(w)}=\sum_{w \in \RG(n,k)}q^{\rb(w)}=S_q(n,k),
\end{equation}
and
\begin{equation}\label{equ-rs}
  \sum_{w \in \RG(n,k)}q^{\lb(w)}=\sum_{w \in \RG(n,k)}q^{\rs(w)}=\widetilde{S}_q(n,k).
\end{equation}
\end{theorem}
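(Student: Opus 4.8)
The plan is to reduce the four equidistributions to two clean recursions together with one reversal bijection. Note first that the whole theorem amounts to showing $\sum_{w}q^{\ls(w)}=\sum_{w}q^{\rb(w)}=S_q(n,k)$ and $\sum_{w}q^{\lb(w)}=\sum_{w}q^{\rs(w)}=\widetilde{S}_q(n,k)$, the sums being over $\RG(n,k)$. I would first dispatch the two ``left'' statistics by induction on $n$ via appending the last letter, and then transport the results to the two ``right'' statistics by a single involution on $\RG(n,k)$.

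For the recursions, fix $w\in\RG(n,k)$ and let $w'=w_1\cdots w_{n-1}$. Appending $w_n$ does not affect $\ls_i$ or $\lb_i$ for any $i<n$, so only the last position contributes a change. A convenient remark is that, since $w$ is an $\RGF$, every value $v<w_i$ already occurs among $w_1,\dots,w_{i-1}$, whence $\ls_i(w)=w_i-1$ and $\ls(w)=\sum_{i=1}^n(w_i-1)$. Two cases arise. If $w_n=k$ is a new maximum then $w'\in\RG(n-1,k-1)$ and this is the unique way to reach maximum $k$; here $\ls_n=k-1$ and $\lb_n=0$. If instead $w'\in\RG(n-1,k)$ then $w_n=j$ may be any of $1,\dots,k$, and because all of $1,\dots,k$ already appear in $w'$ we get $\ls_n=j-1$ and $\lb_n=k-j$. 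Summing $q^{\ls_n}$ over the two cases yields the factors $q^{k-1}$ and $\sum_{j=1}^{k}q^{j-1}=[k]_q$, which is exactly recursion \eqref{def-sq1}; summing $q^{\lb_n}$ yields the factors $1$ and $\sum_{j=1}^{k}q^{k-j}=[k]_q$, which is exactly \eqref{def-sq2}. Hence $\sum_w q^{\ls(w)}=S_q(n,k)$ and $\sum_w q^{\lb(w)}=\widetilde{S}_q(n,k)$.

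It remains to see that $\rb$ is equidistributed with $\ls$ and $\rs$ with $\lb$. The last letter does not give clean factors for the ``right'' statistics, so instead I would use the involution $w\mapsto\bar w$ induced by reversing the ground set of the underlying set partition: if $\pi$ is the set partition of $[n]$ encoded by $w$, let $\bar\pi$ be the partition in which $i$ and $j$ lie in a common block exactly when $n+1-i$ and $n+1-j$ do in $\pi$, and let $\bar w\in\RG(n,k)$ be the $\RGF$ of $\bar\pi$. This is a bijection of $\RG(n,k)$ onto itself preserving $\bk=k$. The claim to establish is that it exchanges the left and right roles together with the smaller/larger roles, namely
\begin{equation*}
\rb(\bar w)=\ls(w)\qquad\text{and}\qquad \rs(\bar w)=\lb(w)\qquad\text{for all }w\in\RG(n,k).
\end{equation*}
Granting this, summing over $\RG(n,k)$ and using the previous paragraph gives $\sum_w q^{\rb(w)}=S_q(n,k)$ and $\sum_w q^{\rs(w)}=\widetilde{S}_q(n,k)$, completing both \eqref{equ-ls} and \eqref{equ-rs}.

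The main obstacle is precisely this transport identity, because the relabelling of blocks by order of their minima interacts subtly with ground-set reversal. I would handle it by rewriting the four statistics in the block language of $\pi$: for each element $i$, $\ls_i$ and $\lb_i$ count the blocks lying below, respectively above, the block of $i$ that have already been opened strictly before $i$, while $\rs_i$ and $\rb_i$ count the analogous blocks that remain active after $i$. Reversing the ground set turns openers into closers and ``before $i$'' into ``after $n+1-i$'', and the induced reindexing of blocks by minima turns the below/above relation into its opposite; checking that these two effects compose to send $\ls\mapsto\rb$ and $\lb\mapsto\rs$ position by position is the crux, after which the theorem follows. A sanity check on $\RG(3,2)=\{112,121,122\}$, where reversal fixes $121$ and swaps $112\leftrightarrow 122$ while interchanging the values of $\ls$ and $\rb$, illustrates the mechanism.
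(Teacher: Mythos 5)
Your induction half is sound and complete. For $w\in\RG(n,k)$ with $w'=w_1\cdots w_{n-1}$, the dichotomy $n\in\LL(w)$ (which forces $w_n=k$ and $w'\in\RG(n-1,k-1)$, with $\ls_n(w)=k-1$ and $\lb_n(w)=0$) versus $w'\in\RG(n-1,k)$ with $w_n=j\in[k]$ (giving $\ls_n(w)=j-1$ and $\lb_n(w)=k-j$, since all of $1,\dots,k$ occur in $w'$) reproduces exactly the recursions \eqref{def-sq1} and \eqref{def-sq2}, and your observation $\ls(w)=\sum_i(w_i-1)$ is correct for RGFs. Note that the paper itself offers no proof of Theorem~\ref{thm-WW}: it is quoted from Milne and from Wachs and White, and your induction is essentially Milne's argument, so this half is fine.

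The transport step, however, contains a genuine error: the ground-set reversal involution does \emph{not} satisfy $\rb(\bar w)=\ls(w)$ and $\rs(\bar w)=\lb(w)$. A concrete counterexample is $w=1221\in\RG(4,2)$, encoding the partition $\{1,4\}/\{2,3\}$. This partition is invariant under $i\mapsto 5-i$, so $\bar w=w$; yet $\ls(w)=2$ while $\rb(w)=1$, and $\lb(w)=1$ while $\rs(w)=2$. Since your map fixes this word, no amount of careful position-by-position checking can make it exchange these statistics. The failure is precisely the subtlety you yourself flagged as the crux: reversal lists blocks in order of their maxima, and re-sorting by minima does not simply invert the relative order of blocks when they are nested --- here $\{1,4\}$ opens first but closes last, so the opener order and the closer order disagree in a way your map cannot see. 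Your sanity check on $\RG(3,2)$ passes only because $n=3$ is too small to contain such a nesting discrepancy (and on the fixed point $121$ all four statistics happen to equal $1$). To repair the argument you would need a genuinely different bijection --- constructing such involutions for $\ls\sim\rb$ and $\lb\sim\rs$ is the nontrivial content of Wachs and White's paper cited here --- or a direct proof that $\sum_w q^{\rb(w)}$ and $\sum_w q^{\rs(w)}$ satisfy the same recursions, which does not follow from naive last-letter deletion, since removing $w_n$ perturbs $\RR(w)$ and hence the coordinate statistics $\rb_i$, $\rs_i$ at every position $i<n$.
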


In this paper, we introduce two new statistics $\wls$ and $\wrs$ on $\RGF$s, whose distributions are
given by $S_q(n,k)$ and $\widetilde{S}_q(n,k)$, respectively.

\begin{theorem} \label{main-rgf}
For $n \geq k \geq 1$, we have
\begin{equation}\label{equ-wls}
  \sum_{w \in \RG(n,k)}q^{\wls(w)}=S_q(n,k),
\end{equation}
and
\begin{equation}\label{equ-wrs}
  \sum_{w \in \RG(n,k)}q^{\wrs(w)}=\widetilde{S}_q(n,k).
\end{equation}
\end{theorem}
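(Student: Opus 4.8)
The plan is to prove \eqref{equ-wls} and \eqref{equ-wrs} by induction on $n$, showing that each generating function satisfies the defining recursion of the corresponding $q$-Stirling number. Writing $F(n,k):=\sum_{w\in\RG(n,k)}q^{\wls(w)}$ and $\widetilde F(n,k):=\sum_{w\in\RG(n,k)}q^{\wrs(w)}$, the goal is to verify that $F$ obeys \eqref{def-sq1} and $\widetilde F$ obeys \eqref{def-sq2}, together with the common boundary values. The base case $n=1$ and the boundary columns $k=1,n$ are checked directly, since $\RG(n,n)=\{12\cdots n\}$ and $\RG(n,1)=\{11\cdots 1\}$ are singletons, so the whole argument rests on the inductive step.

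For \eqref{equ-wls} I would decompose $\RG(n,k)$ according to the last letter $w_n$. Deleting $w_n$ from $w\in\RG(n,k)$ yields a shorter $\RGF$ $w'=w_1\cdots w_{n-1}$, and exactly one of two situations occurs: either $w_n=k$ records a fresh maximum, in which case $w'\in\RG(n-1,k-1)$ and this letter is forced; or $w_n=j$ for some $j\in\{1,\dots,k\}$ with $w'\in\RG(n-1,k)$, giving $k$ choices. The heart of the matter is an increment lemma: one must show that $\wls(w)-\wls(w')$ equals $k-1$ in the first case, and ranges bijectively over $\{0,1,\dots,k-1\}$ as $j$ runs over $\{1,\dots,k\}$ in the second. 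Granting this, summing $q^{\wls}$ over the two families reproduces exactly the two terms $q^{k-1}F(n-1,k-1)$ and $[k]_qF(n-1,k)$ of \eqref{def-sq1}.

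The delicate point --- and the step I expect to be the main obstacle --- is that $\wls$, being a vincular refinement of the Wachs--White statistic $\ls$, need not be \emph{local}: appending a letter at the right end can in principle alter the contributions attached to earlier positions through an adjacency condition, unlike the plain $\ls$ whose position-wise values look only to the left. Proving the increment lemma therefore amounts to checking that no such long-range interference occurs, or controlling it exactly, which is precisely where the definition of $\wls$ must be used in full. For the companion statistic $\wrs$ the difficulty is sharper still, because a right-looking statistic is perturbed at \emph{every} earlier position when a new last letter is added; here I would instead run the recursion off a reversal or complement symmetry of $\RG(n,k)$ that turns $\wrs$ into a left-looking statistic, so that the clean last-letter surgery applies again and yields \eqref{def-sq2}, whose leading coefficient $1=q^{0}$ reflects the fact that the final position contributes nothing to a right count.

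Finally, there is a shortcut I would keep in reserve. The identity $S_q(n,k)=q^{\binom{k}{2}}\widetilde S_q(n,k)$ recorded above means \eqref{equ-wls} and \eqref{equ-wrs} are equivalent provided one can relate $\wls$ and $\wrs$ directly on $\RG(n,k)$ --- for instance by an involution $w\mapsto\bar w$ with $\wls(w)+\wrs(\bar w)=\binom{k}{2}$, or more weakly an equidistribution of $\wls$ with $\binom{k}{2}+\wrs$. If such a symmetry is available it lets me prove only one of the two formulas from scratch and deduce the other; and as an independent sanity check, a bijection $\RG(n,k)\to\RG(n,k)$ carrying $\wls$ to $\ls$, respectively $\wrs$ to $\rs$, would reduce both statements to Theorem~\ref{thm-WW} outright.
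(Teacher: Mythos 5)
Your strategy for \eqref{equ-wls} is indeed the paper's strategy (last-letter deletion, matching the recursion \eqref{def-sq1}), but the ``increment lemma'' you rest it on is false, and the failure is exactly the long-range interference you flagged but hoped to rule out. When $w_n=1$, appending the letter moves $\pl(w)$ to position $n$, which alters $\wls_i$ at earlier positions; quantitatively, \eqref{def-wls} gives $\wls(w)-\wls(w')=n-1-\pl(w')$, which depends on $w'$ and need not be $k-1$. Concretely, take $w'=121\in\RG(3,2)$, with $\wls(w')=2$: both extensions $1211$ and $1212$ in $\RG(4,2)$ have $\wls=2$, so the multiset of increments is $\{0,0\}$, not $\{0,1\}$ as your lemma requires (while $w'=122$ yields increments $\{2,0\}$). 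The recursion survives only \emph{in distribution}, and the repair needs an idea absent from your proposal: for $w_n=1$ one shows $\wls(w)=\ls(w')+k-1$ --- note the switch to the classical statistic $\ls$, not $\wls$ --- and then invokes the Wachs--White/Milne result \eqref{equ-ls} that $\sum_{w'}q^{\ls(w')}=S_q(n-1,k)$, which together with the induction hypothesis for $\wls$ makes the two sub-sums combine to $[k]_qS_q(n-1,k)$. Without importing Theorem~\ref{thm-WW} at this point, the induction does not close.

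For \eqref{equ-wrs} your proposed mechanism does not exist: the reversal of an RGF is generally not an RGF, and no reversal/complement symmetry of $\RG(n,k)$ can turn right-looking statistics into left-looking ones (if it did in the naive sense, $\ls$ and $\rs$ would be equidistributed, whereas their generating functions differ by the factor $q^{\binom{k}{2}}$). The paper instead performs a different surgery: given $v\in\RG(n+1,k)$, delete \emph{all} occurrences of $1$ and decrease the remaining letters by $1$, obtaining $v'\in\RG(j,k-1)$ with $\binom{n}{j}$ ways to reinsert; one checks $\wrs(v)=\rs(v')+j-(k-1)$ regardless of where the $1$'s sit, and concludes via Wagner's recurrence \eqref{recur2} together with \eqref{equ-rs}. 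Your reserve shortcuts are true statements --- $\wls$ is equidistributed with $\wrs+\binom{k}{2}$ on $\RG(n,k)$, and bijections carrying $\wls$ to $\ls$ and $\wrs$ to $\rs$ do exist --- but the latter are precisely the nontrivial constructions $\xi$ and $\zeta$ of Theorems~\ref{main-bijection} and~\ref{main-bijection-r}, proved in Sections~\ref{sec:involution} and~\ref{sec:bijection}; citing their possible existence cannot substitute for a proof. As written, both halves of your argument have genuine gaps at their central steps.
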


Combining \eqref{equ-ls} and \eqref{equ-wls}, we see that statistics
$\ls$ and $\wls$ have the same distribution on $\RG(n,k)$.
Combining \eqref{equ-rs} and \eqref{equ-wrs}, we see that statistics
$\rs$ and $\wrs$ have the same distribution on $\RG(n,k)$.
 These facts will be proved via a bijection $\xi$ and a bijection $\zeta$, which also preserve some other statistics.

\begin{theorem}\label{main-bijection}
Statistics $(\LL, \Asc, \ls)$ and
$(\LL, \Asc, \wls)$ have the same joint distribution on $\RG(n)$ for all $n \geq 1$.
\end{theorem}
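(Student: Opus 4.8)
The plan is to construct an explicit bijection $\xi\colon\RG(n)\to\RG(n)$ with the property that, for every $w\in\RG(n)$,
\[
\LL(\xi(w))=\LL(w),\qquad \Asc(\xi(w))=\Asc(w),\qquad \wls(\xi(w))=\ls(w).
\]
This at once yields the asserted equality of the joint distributions of $(\LL,\Asc,\ls)$ and $(\LL,\Asc,\wls)$. Moreover, as $\xi$ will be seen to preserve the number of distinct letters $\bk$, it restricts to a bijection on each $\RG(n,k)$; combined with Theorem~\ref{thm-WW} this is exactly what is needed to deduce \eqref{equ-wls} in Theorem~\ref{main-rgf}.

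The structural observation I would build on is that both $\ls$ and $\wls$ are \emph{left-based}: the contribution of a position $i$ depends only on the prefix $w_1\cdots w_i$. For $\ls$ this is especially clean, since by the restricted growth condition all of $1,\dots,w_i-1$ already occur among $w_1,\dots,w_{i-1}$; one checks from its definition that $\ls_i(w)=w_i-1$, whence $\ls(w)=\sum_{i=1}^n(w_i-1)$. In particular, deleting the last letter of an RGF again yields an RGF, and appending a letter $a$ increases $\ls$ by exactly $a-1$ while leaving the contributions of the earlier positions untouched. This invites an induction on $n$: write $w=u\,a$ with $u\in\RG(n-1)$, form $\xi(u)$ by the inductive hypothesis, and set $\xi(w)=\xi(u)\,b$ for a suitably chosen last letter $b$.

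The heart of the argument is the choice of $b$. Because appending a letter affects only the new position's statistic-contribution and the single junction comparing $w_{n-1}$ with the appended letter, I would select $b$ to be the unique admissible last letter for $\xi(u)$ whose $\wls$-increment equals $a-1$, while simultaneously reproducing the ascent/descent at the junction and the $\LL$-status of the final position. The main obstacle is showing that these requirements can be met at once and invertibly: one must prove that, as $b$ ranges over the admissible last letters of a prescribed junction-type for $\xi(u)$, the $\wls$-increment assumes each required value exactly once, so that $a\mapsto b$ is a bijection between the admissible last letters of $u$ and those of $\xi(u)$. This is precisely the point at which the definitions of $\wls$ and $\LL$ must be used in earnest, and I expect it to reduce to a short case analysis according to whether the appended letter creates a new maximum, sits at the top of an ascent, or forms a descent.

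Granting this insertion lemma, induction on $n$ (with the trivial base case $n=1$) produces the desired bijection $\xi$ and verifies all three equalities simultaneously, completing the proof. The companion map $\zeta$ governing the right-based pair $\rs,\wrs$ would then be obtained by the mirror-image argument.
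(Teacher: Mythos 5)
Your overall strategy (an explicit bijection fixing $\LL$ and $\Asc$ while exchanging $\ls$ and $\wls$) is exactly the paper's, but the inductive last-letter construction you propose cannot work, and the failure is already visible at $n=4$. The premise that $\wls$ is ``left-based'' is false: by its definition, $\wls_i(w)$ depends on whether $i\in\RR(w)$ and on the comparison of $i$ with $\pl(w)$, and both $\RR(w)$ and $\pl(w)$ are determined by the suffix beyond position $i$. In particular, appending a letter $b$ to $u$ is not a local operation on $\wls$: it strips the previous rightmost occurrence of $b$ of its $\RR$-status, and if $b=1$ it moves $\pl$ all the way to position $n$, altering $\wls_i$ for every position between the old and new rightmost $1$. (One can still control the \emph{total} change via the identity \eqref{def-wls}, $\wls(w)=\ls(w)-n+\pl(w)+\bk(w)-1$, but the resulting increments over admissible last letters $b\in\{1,\dots,\bk+1\}$ are $\{n-1-\pl\}\cup\{0,\dots,\bk-2\}\cup\{\bk\}$, which does not match the $\ls$-increment multiset $\{0,1,\dots,\bk\}$ unless $\pl$ happens to sit at position $n-\bk$.)

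Concretely, your insertion lemma is false. At length $3$, the only words with $\LL=\{1,2\}$ and $\Asc=\{1\}$ are $121$ and $122$, with $\wls(121)=2$ and $\wls(122)=1$; since $\ls(121)=1$, your inductive hypothesis forces $\xi(121)=122$. Now take $w=1212$, so $a=2$, $\Asc(w)=\{1,3\}$, $\LL(w)=\{1,2\}$. You must set $\xi(w)=122\,b$ with $3\in\Asc(\xi(w))$, forcing $b=3$; but then $4\in\LL(1223)$ while $4\notin\LL(w)$, and no admissible $b$ exists. This is why the paper's map $\xi$ is not a prefix-extension at all: it performs a global surgery, decomposing $w$ around the rightmost $1$ and the leftmost occurrence of $k+1$ (where $k$ is the largest letter preceding that $1$), deleting that $1$, inserting a new copy of $k$, and applying the decrementing operator $\delta^-$ (which fixes left-to-right maxima and subtracts $1$ elsewhere) to an entire suffix --- so letters arbitrarily far from the end are modified. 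Any correct proof along bijective lines has to move information between the two ends of the word in this way, precisely because $\wls$ couples each position to the suffix through $\RR$ and $\pl$; a one-letter-at-a-time extension cannot transport that information.
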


\begin{theorem}\label{main-bijection-r}
Statistics $(\LL, \Asc, \rs)$ and
$(\LL, \Asc, \wrs)$ have the same joint distribution on $\RG(n)$ for all $n \geq  1$.
\end{theorem}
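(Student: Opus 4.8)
The plan is to establish Theorem~\ref{main-bijection-r} exactly as its counterpart Theorem~\ref{main-bijection} is established: by producing an explicit bijection $\zeta\colon\RG(n)\to\RG(n)$ that fixes the pair $(\LL,\Asc)$ and converts one statistic into the other, say $\rs(\zeta(w))=\wrs(w)$ for every $w\in\RG(n)$. Since $\zeta$ is only required to act within each fiber of $(\LL,\Asc)$, the whole problem reduces to showing that, on the set of RGFs sharing a fixed value of $\LL$ and a fixed value of $\Asc$, the statistics $\rs$ and $\wrs$ are equidistributed, and to realizing this equidistribution by a concrete, invertible rule.

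First I would pin down how $\rs$ and $\wrs$ respond to a local modification of $w$. The essential asymmetry to confront is that, unlike the left-smaller count, the right-smaller count of a given position is sensitive to the letters placed after it. Reading $w$ from left to right and recording, for each non-record position, the interval of admissible values allowed by the condition $w_i\le\bk(w_1\cdots w_{i-1})+1$, I would compute the increment that changing a single non-record letter produces in $\rs$ and, separately, in $\wrs$. The goal of this step is a clean combinatorial description of the contribution of each position to each statistic, phrased so that the record positions (which govern $\Asc$ and the value seen by $\LL$) are manifestly untouched.

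With these increments in hand, I would construct $\zeta$ by induction, peeling off the structure in the order that is natural for a right-accumulating statistic rather than the left-to-right order that builds RGFs. Concretely, I expect to match the per-position contributions of $\wrs$ against those of $\rs$ one interval at a time, reassigning non-record letters to new admissible values while certifying at each stage that the resulting word is still a genuine RGF, that its ascent set and its $\LL$-value are unchanged, and that the running total of $\rs$ on the image equals the running total of $\wrs$ on the source. The inverse map is obtained by running the same matching backwards.

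The hard part will be the clash between the two directions: the RGF condition is generated from the left, whereas $\rs$ and $\wrs$ accumulate information from the right, so altering a letter perturbs the right-smaller counts of many earlier positions at once. Keeping this global perturbation under control---so that the transfer $\wrs\mapsto\rs$ is exact and the image never violates the growth condition---is the crux. Should a direct construction prove unwieldy, a fallback is to seek an involution $\rho$ on $\RG(n)$ with $\rs=\ls\circ\rho$ and $\wrs=\wls\circ\rho$ that preserves $(\LL,\Asc)$, and then set $\zeta=\rho^{-1}\circ\xi\circ\rho$, thereby deducing Theorem~\ref{main-bijection-r} from Theorem~\ref{main-bijection}; the difficulty there migrates to verifying that such a $\rho$ respects $(\LL,\Asc)$.
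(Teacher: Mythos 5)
Your proposal does not close the gap, and its only concrete escape route is provably impossible. The fallback of finding an involution $\rho$ on $\RG(n)$ with $\rs=\ls\circ\rho$ (and $\wrs=\wls\circ\rho$) preserving $(\LL,\Asc)$ cannot exist: any such $\rho$ would force $\ls$ and $\rs$ to be equidistributed, but by Theorem~\ref{thm-WW} their generating functions over $\RG(n,k)$ are $S_q(n,k)$ and $\widetilde{S}_q(n,k)=q^{-\binom{k}{2}}S_q(n,k)$, which differ for every $k\ge 2$; indeed, since preserving $\LL$ fixes $\bk(w)=\#\LL(w)$ (Proposition~\ref{char-RGF}), $\rho$ would restrict to each $\RG(n,k)$, and already on $\RG(2)=\{11,12\}$ the distributions are $1+q$ for $\ls$ versus $2$ for $\rs$. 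So that route must be struck entirely.

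Your main route, meanwhile, is a statement of intent rather than a construction: the step where you ``match the per-position contributions of $\wrs$ against those of $\rs$ one interval at a time'' is exactly the content of the theorem, and you yourself flag the crux (altering a letter perturbs $\rs_j$ for many earlier $j$) without resolving it. For comparison, the paper's proof splits the difficulty in two. First it defines a positionwise map $\varphi$ --- keep $w_i$ for $i\in\LL(w)$; set $v_i=w_i-1$ when $i\in\RR(w)\setminus\LL(w)$ and some earlier letter equals $w_i-1$; otherwise $v_i=\min\{a\in\suf_i(w):a>w_i\}-1$ --- and proves via a bijection between $\wrs$-pairs of $w$ and $\rs$-pairs of $v$ (Lemma~\ref{vrs-rs-pair}) the vector identity $\bwrs(w)=\brs(\varphi(w))$, sidestepping your global-perturbation worry by matching pairs rather than tracking increments. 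Second, and this is where your plan is structurally misaligned: $\varphi$ does \emph{not} preserve $\Asc$. The paper classifies the exactly two local ``bad pair'' configurations where an ascent flips (Lemma~\ref{notkeepasc}), shows an adjacent swap of a bad pair preserves $\RG(n)$, $\LL$, and $\bwrs$ (Lemma~\ref{stat-swap}), and precomposes with a sweep algorithm $\gamma$ so that $\zeta=\varphi\circ\gamma$ preserves $\Asc$ (Lemma~\ref{lemma:Asc}). Your insistence that every intermediate reassignment keep $\Asc$ intact demands precisely what the natural transfer map fails to deliver; without an analogue of the bad-pair analysis and a correcting pre-bijection, the induction you envision has no mechanism to succeed.
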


The motivation for these two new statistics $\wls$ and $\wrs$ stems from their counterparts defined on pattern-avoiding permutations. We use $\SS_n(1\underline{32})$ to denote the set of permutations of $[n]$ avoiding the vincular pattern $1\underline{32}$, and $\SS_n^k(1\underline{32})$ to denote its subset that is composed of permutations with $k$ descents. And we use $\SS_n$ (resp. $\SS_n^k$) to denote the bigger set with the pattern avoidance condition lifted. As pointed out by Claesson \cite[Proposition~3]{Cl}, there is a natural one-to-one correspondence between permutations in $S_n(1\underline{32})$ with $k-1$ descents and set partitions of $[n]$ with $k$ blocks, which in turn, can be bijectively mapped to $\RG(n,k)$ (see for example \cite[Theorem~5.1]{SW}). Consequently, we have the following three corollaries.
\begin{corollary}\label{main-Sn}
For $n \geq k \geq 1$, we have
\begin{equation*}
  \sum\limits_{\sigma \in \SS_n^{k-1}(1\underline{32})}
    q^{\MAJ(\sigma)}
=
\sum\limits_{\sigma \in \SS_n^{k-1}(1\underline{32})}
    q^{\BAST(\sigma)}=S_q(n,k),
\end{equation*}
and
\begin{equation*}
   \sum\limits_{\sigma \in \SS_n^{k-1}(1\underline{32})}
q^{2\underline{13}(\sigma)}
=
\sum\limits_{\sigma \in \SS_n^{k-1}(1\underline{32})}
q^{2\underline{31}(\sigma)}=\widetilde{S}_q(n,k).
\end{equation*}
\end{corollary}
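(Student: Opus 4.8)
The plan is to transport all four permutation statistics through the composite bijection
\[
\SS_n^{k-1}(1\underline{32}) \;\longrightarrow\; \RG(n,k),
\]
obtained by following Claesson's correspondence \cite[Proposition~3]{Cl} between $1\underline{32}$-avoiding permutations with $k-1$ descents and set partitions of $[n]$ into $k$ blocks, and then applying the standard encoding of such a partition by its restricted growth function \cite[Theorem~5.1]{SW}. The essential bookkeeping is that $k-1$ descents become $k$ blocks, hence a word $w$ with $\bk(w)=k$, so that the summation ranges on the two sides agree. Once this grading is matched, the corollary reduces entirely to the RGF equidistributions recorded in Theorems~\ref{thm-WW} and \ref{main-rgf}.

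For the first identity I would show that this composite carries $\MAJ$ to one of the statistics in \eqref{equ-ls}, say $\ls$, and carries $\BAST$ to $\wls$. Granting the former, \eqref{equ-ls} gives $\sum_{\sigma}q^{\MAJ(\sigma)}=\sum_{w}q^{\ls(w)}=S_q(n,k)$; granting the latter, \eqref{equ-wls} gives $\sum_{\sigma}q^{\BAST(\sigma)}=\sum_{w}q^{\wls(w)}=S_q(n,k)$. The correspondence $\BAST\mapsto\wls$ is precisely the motivation advertised for the new statistic $\wls$, so it should fall out of comparing the definition of the Babson--Steingr\'{i}msson statistic with the letter-by-letter action of the encoding. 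For the second identity I would argue in the same way that the vincular pattern count $2\underline{13}$ is carried to a statistic in \eqref{equ-rs}, say $\rs$, and that $2\underline{31}$ is carried to $\wrs$, after which \eqref{equ-rs} and \eqref{equ-wrs} yield both equalities to $\widetilde{S}_q(n,k)$.

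The main obstacle is verifying these four statistic-preserving properties, since the permutation statistics $\MAJ$, $\BAST$, $2\underline{13}$, $2\underline{31}$ and the RGF statistics $\ls$, $\rs$, $\wls$, $\wrs$ are defined in genuinely different settings; pinning down $\BAST\mapsto\wls$ and $2\underline{31}\mapsto\wrs$ in particular requires unwinding both the Babson--Steingr\'{i}msson definition and the precise behavior of Claesson's map on each position. A convenient shortcut is available, however: once the two more classical correspondences $\MAJ\mapsto\ls$ and $2\underline{13}\mapsto\rs$ have been checked, the refined equidistributions $\ls\sim\wls$ and $\rs\sim\wrs$ furnished by Theorems~\ref{main-bijection} and \ref{main-bijection-r} (which even track the auxiliary data $\LL$ and $\Asc$, and in particular respect the grading by $\bk$) can be pulled back through the bijection to supply the $\BAST$ and $2\underline{31}$ halves with no separate computation. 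This route isolates the real content in the two remaining correspondences, which are the heart of the argument.
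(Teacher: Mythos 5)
Your main route is exactly the paper's proof: the paper packages your composite (Claesson's correspondence followed by the RGF encoding) as a single bijection $\theta$ in Lemma~\ref{lem-rgf2per}, verifies precisely your four correspondences $(\MAJ,2\underline{13},\BAST,2\underline{31})\,p=(\ls,\rs,\wls,\wrs)\,\theta(p)$ together with the grading $\des(p)=\bk(\theta(p))-1$, and then reads the corollary off from Theorems~\ref{thm-WW} and \ref{main-rgf}, just as you describe.

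However, the shortcut in your final paragraph does not do what you claim. Theorems~\ref{main-bijection} and \ref{main-bijection-r} live entirely on $\RG(n)$: pulling the equidistribution $\ls\sim\wls$ back through $\theta$ produces a bijection of $\SS_n^{k-1}(1\underline{32})$ showing that $\MAJ$ is equidistributed with the pullback statistic $\wls\circ\theta$ --- but this says nothing about $\BAST$ unless you already know $\wls\circ\theta=\BAST$, which is exactly the verification you were trying to skip; the same circularity affects the $2\underline{31}$ half. (In fact Theorems~\ref{main-bijection} and \ref{main-bijection-r} are not needed for this corollary at all; in the paper they feed Corollaries~\ref{main-bijectionper} and \ref{main-bijectionper-r}.) The legitimate ``no separate computation'' route, and the one the paper actually uses, is arithmetic rather than bijective: after checking $\MAJ\mapsto\ls$, $2\underline{13}\mapsto\rs$, and the two easy facts $p_n=\pl(\theta(p))$ and $\des(p)=\bk(\theta(p))-1$, equation \eqref{231-213} gives $\MAJ(p)-\BAST(p)=n-p_n-\des(p)$, while \eqref{def-wls} gives $\ls(w)-\wls(w)=n-\pl(w)-\bk(w)+1$ for $w=\theta(p)$; the two right-hand sides coincide, forcing $\BAST(p)=\wls(w)$, and combining with \eqref{wls+wrs} (summed over all positions $i$) likewise forces $2\underline{31}(p)=\wrs(w)$. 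With that replacement for your shortcut, your outline is complete and agrees with the paper's argument.
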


\begin{corollary}\label{main-bijectionper}
Statistics $(\Db, \Id, \MAJ)$ and
$(\Db, \Id, \BAST)$ have the same joint distribution on $\SS_n(1\underline{32})$ for all $n \geq 1$. (See Table~\ref{MAJ-BAST} for the case of $n=4$.)
\end{corollary}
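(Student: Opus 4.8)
The plan is to transport the equidistribution already established on restricted growth functions in Theorem~\ref{main-bijection} to the pattern-avoiding permutations, using the classical encoding. Composing Claesson's bijection \cite[Proposition~3]{Cl} between $\SS_n(1\underline{32})$ and set partitions of $[n]$ with the standard encoding of set partitions as $\RGF$s \cite[Theorem~5.1]{SW}, one obtains an explicit bijection $\Theta\colon \SS_n(1\underline{32})\to \RG(n)$ that, by Claesson's result, sends a permutation with $k-1$ descents to a word $w$ with $\bk(w)=k$. Everything then reduces to understanding how $\Theta$ acts on the three relevant statistics.

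First I would establish the pointwise correspondences
$$\Db(\sigma)=\LL(\Theta(\sigma)),\qquad \Id(\sigma)=\Asc(\Theta(\sigma)),$$
together with
$$\MAJ(\sigma)=\ls(\Theta(\sigma)),\qquad \BAST(\sigma)=\wls(\Theta(\sigma)).$$
The identity $\MAJ(\sigma)=\ls(\Theta(\sigma))$ underlies the $\MAJ$ part of Corollary~\ref{main-Sn}, but here it must be upgraded from an equality of generating functions to an identity holding permutation by permutation and compatible with the auxiliary pair $(\Db,\Id)$. The identity $\BAST(\sigma)=\wls(\Theta(\sigma))$, tying the Babson--Steingr\'imsson statistic to the newly introduced $\RGF$ statistic $\wls$, is the essential new ingredient.

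Granting these correspondences, the corollary is immediate. Theorem~\ref{main-bijection} provides a bijection $\beta$ on $\RG(n)$ with $(\LL,\Asc,\ls)(w)=(\LL,\Asc,\wls)(\beta(w))$ for every $w$; conjugating by $\Theta$, the map $\Theta^{-1}\circ\beta\circ\Theta$ is a bijection on $\SS_n(1\underline{32})$ under which $\Db$ and $\Id$ are preserved while $\MAJ$ is carried to $\BAST$, which is exactly the asserted equality of joint distributions.

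I expect the crux to be the identity $\BAST(\sigma)=\wls(\Theta(\sigma))$. Unwinding the definition of $\BAST$ as a Mahonian statistic built from vincular pattern occurrences, and tracking how each such occurrence is recorded by the left-smaller data of the image $\RGF$, will require a careful analysis of the descent tops and bottoms of $\sigma$ under the set-partition encoding. The delicate point is to carry this out while simultaneously checking that $\Db$ and $\Id$ are sent to $\LL$ and $\Asc$, so that the full three-fold joint distribution---rather than merely the marginal of $\BAST$---is preserved; this is precisely the refinement that distinguishes Corollary~\ref{main-bijectionper} from the generating-function statement of Corollary~\ref{main-Sn}.
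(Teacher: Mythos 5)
Your proposal is correct and matches the paper's own proof: the paper's Lemma~\ref{lem-rgf2per} establishes exactly your key correspondence $(\Db,\Id,\MAJ,\BAST)\,\sigma=(\LL,\Asc,\ls,\wls)\,\theta(\sigma)$ for the Claesson-plus-$\RGF$ encoding, and the corollary is then obtained, as you say, by conjugating the bijection of Theorem~\ref{main-bijection} by $\theta$. The only difference is cosmetic: rather than your anticipated direct pattern-by-pattern analysis of $\BAST(\sigma)=\wls(\theta(\sigma))$, the paper deduces it painlessly from the identities $\MAJ(p)-\BAST(p)=n-p_n-\des(p)$ (Eq.~\eqref{231-213}) and $\ls(w)-\wls(w)=n-\pl(w)-\bk(w)+1$ (Eq.~\eqref{def-wls}), combined with $\MAJ(p)=\ls(w)$, $p_n=\pl(w)$ and $\des(p)+1=\bk(w)$.
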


\begin{corollary}\label{main-bijectionper-r}
Statistics $(\Db, \Id, 2\underline{13})$ and
$(\Db, \Id, 2\underline{31})$ have the same joint distribution on $\SS_n(1\underline{32})$ for all $n\geq 1$.
\end{corollary}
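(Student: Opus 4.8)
The plan is to deduce this from the restricted-growth-function result Theorem~\ref{main-bijection-r}, in exactly the way Corollary~\ref{main-bijectionper} is meant to follow from Theorem~\ref{main-bijection}; the two corollaries are the $S_q$- and $\widetilde{S}_q$-shadows of one and the same transfer. Concretely, let $\Phi\colon \SS_n(1\underline{32})\to\RG(n)$ be the bijection obtained by composing Claesson's correspondence \cite[Proposition~3]{Cl} between $1\underline{32}$-avoiders and set partitions of $[n]$ with the standard encoding of set partitions as restricted growth functions \cite[Theorem~5.1]{SW}. Since a permutation with $k-1$ descents is sent to a partition with $k$ blocks, $\Phi$ restricts to a bijection $\SS_n^{k-1}(1\underline{32})\to\RG(n,k)$, so that, once the statistics are matched, the single-variable identity in the second display of Corollary~\ref{main-Sn} is recovered from Theorem~\ref{thm-WW} and Theorem~\ref{main-rgf}.

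First I would pin down the statistic dictionary carried by $\Phi$. The descent-type data should be handled exactly as in Corollary~\ref{main-bijectionper}: one checks that $\Phi$ transforms the pair $(\Db,\Id)$ on permutations into the pair $(\LL,\Asc)$ on $\RGF$s, and, importantly, this half of the dictionary does not depend on which inversion-like statistic one subsequently tracks, so it is literally shared with the $(\MAJ,\BAST)$ statement. The new content is the pair of identities
\begin{equation*}
2\underline{13}(\sigma)=\rs(\Phi(\sigma))\qquad\text{and}\qquad 2\underline{31}(\sigma)=\wrs(\Phi(\sigma))\qquad\text{for every }\sigma\in\SS_n(1\underline{32}).
\end{equation*}
To establish these I would follow a single pattern occurrence through $\Phi$: for $\sigma$ avoiding $1\underline{32}$, an occurrence of $2\underline{13}$ (respectively $2\underline{31}$) at a bound adjacent pair should correspond to a distinguished letter of $\Phi(\sigma)$ together with a count of distinct smaller values lying to its right, which is precisely the recipe defining $\rs$ (respectively $\wrs$). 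The vincular adjacency in the patterns is what fixes the positional bookkeeping on the $\RGF$ side.

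With the dictionary in hand the corollary follows formally. By $\Phi$, the joint distribution of $(\Db,\Id,2\underline{13})$ over $\SS_n(1\underline{32})$ coincides with that of $(\LL,\Asc,\rs)$ over $\RG(n)$, and the joint distribution of $(\Db,\Id,2\underline{31})$ coincides with that of $(\LL,\Asc,\wrs)$. Theorem~\ref{main-bijection-r} equates the latter two $\RGF$ distributions, and transporting the equality back through $\Phi$ yields the claim for all $n\ge 1$.

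I expect the main obstacle to be the verification of the pattern-to-statistic identities $2\underline{13}=\rs\circ\Phi$ and $2\underline{31}=\wrs\circ\Phi$, rather than the formal transfer. One must unwind Claesson's map and the $\RGF$ encoding simultaneously and show that the $1\underline{32}$-avoidance hypothesis is exactly what forbids spurious contributions, so that the number of vincular occurrences equals the intended count of distinct smaller values to the right of a letter; determining which pattern matches $\rs$ and which matches $\wrs$ is part of this step. A secondary, more routine point is to confirm that the very map $\Phi$ realizing the $(\MAJ,\BAST)$ statement of Corollary~\ref{main-bijectionper} also realizes the $(\Db,\Id)\mapsto(\LL,\Asc)$ half here. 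Finally, one should note that a cheap dihedral shortcut is unavailable: reverse, complement, and reverse--complement each carry $1\underline{32}$ outside the avoidance class, so the two statistics cannot simply be swapped by a symmetry of $\SS_n(1\underline{32})$, which is why the $\RGF$ detour is the natural route.
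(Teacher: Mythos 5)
Your proposal is correct and follows essentially the same route as the paper: the authors' Lemma~\ref{lem-rgf2per} constructs exactly the bijection $\theta$ you call $\Phi$ (Claesson's correspondence composed with the RGF encoding, realized via barred permutations) and records the dictionary $(\Db,\Id,\MAJ,2\underline{13},\BAST,2\underline{31})\,p=(\LL,\Asc,\ls,\rs,\wls,\wrs)\,\theta(p)$, after which the corollary is the formal transfer of Theorem~\ref{main-bijection-r}, just as you describe. The only cosmetic difference is that the paper verifies $2\underline{13}=\rs\circ\theta$ directly and then obtains $2\underline{31}=\wrs\circ\theta$ from the identities \eqref{wls+wrs}, \eqref{def-wls} and \eqref{231-213} rather than tracing the second pattern through the bijection from scratch.
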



We will extend the set of $\RGF$s to the set of \emph{unrestricted growth functions}, or \emph{URGs} for short, in order to encode the set of \emph{ordered partitions}. These are partitions whose blocks can be arbitrarily permuted, in contrast to the ``least element increasing'' convention for the (unordered) partitions. We take $\URG(n)$ to be the set of URGs of length $n$, and $\URG(n,k)$ to be the set of all $w\in\URG(n)$ with $\bk(w)=k$. Let $[k]_q!:=[1]_q[2]_q\cdots [k]_q$.
Following Steingr\'imsson \cite{Ste}, the statistics defined on $\URG(n,k)$ with distribution given by $[k]_q!\cdot S_q(n,k)$ are called \emph{Euler-Mahonian}.
Steingr\'imsson introduced a statistic $\bmajMIL$ and showed that it is Euler-Mahonian. Encouraged by Theorem~\ref{main-bijection} and Corollary~\ref{main-bijectionper}, we introduce a new Euler-Mahonian statistic $\bmajBAST$, defined on URGs.

\begin{theorem}\label{main-URG}
For $n\ge k\ge 1$, we have
\begin{align*}
\sum_{w\in\URG(n,k)}q^{\bmajMIL(w)}=\sum_{w\in\URG(n,k)}q^{\bmajBAST(w)}=[k]_q!\cdot S_q(n,k).
\end{align*}
\end{theorem}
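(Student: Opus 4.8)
The plan is to reduce the statement to the already-settled restricted setting by exploiting the product structure of $\URG(n,k)$ over $\RG(n,k)$. Since Steingr\'imsson has shown that $\bmajMIL$ is Euler-Mahonian, the first sum is already known to equal $[k]_q!\cdot S_q(n,k)$; the genuinely new content is the second equality, i.e. that $\bmajBAST$ is equidistributed with $\bmajMIL$. First I would record the elementary bijection
\[
\Phi\colon \URG(n,k)\longrightarrow \RG(n,k)\times\SS_k,\qquad w\longmapsto(\st(w),\pi_w),
\]
where $\st(w)\in\RG(n,k)$ is obtained by relabelling the letters of the surjective word $w$ according to the order in which the distinct values $1,\dots,k$ first appear, and $\pi_w\in\SS_k$ records that order of first appearances. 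This is a bijection because a surjective word in $[k]^n$ is recovered uniquely from its standardization together with the relabelling permutation, and the fibre of $\st$ over each RGF has exactly $k!$ elements.

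The core of the proof is to show that each block-major statistic splits along $\Phi$ into an RGF-part and a permutation-part,
\[
\bmajMIL(w)=\MIL(\st(w))+M_1(\pi_w),\qquad \bmajBAST(w)=B(\st(w))+M_2(\pi_w),
\]
where $M_1,M_2$ are Mahonian statistics on $\SS_k$, each with $\sum_{\pi\in\SS_k}q^{M_i(\pi)}=[k]_q!$, and $\MIL$, $B$ are the two underlying RGF statistics, each distributed as $S_q(n,k)$ over $\RG(n,k)$. Here the distribution of $\MIL$ is Milne's theorem, while that of $B$ follows from Theorem~\ref{main-rgf} after identifying $B$ with $\wls$ (Theorem~\ref{main-bijection} having already matched $\ls$ and $\wls$). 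Granting such splits, both sums factor as a product over the two coordinates; in particular
\[
\sum_{w\in\URG(n,k)}q^{\bmajBAST(w)}=\Big(\sum_{v\in\RG(n,k)}q^{B(v)}\Big)\Big(\sum_{\pi\in\SS_k}q^{M_2(\pi)}\Big)=S_q(n,k)\cdot[k]_q!,
\]
and comparing with Steingr\'imsson's evaluation of $\bmajMIL$ yields both equalities at once. Note that the two permutation-parts need not be literally equal: it suffices that each is $[k]_q!$-distributed and each RGF-part is $S_q(n,k)$-distributed, so the factorization closes regardless of the precise form of $M_1,M_2$.

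The main obstacle is establishing the clean additive split of $\bmajBAST$ (and checking the analogue for $\bmajMIL$) along $\Phi$. The difficulty is that a block-major index is read off from the descent structure of the associated ordered partition, and these descents can a priori depend simultaneously on the underlying RGF $\st(w)$ and on the relabelling $\pi_w$; the delicate bookkeeping is to disentangle the two contributions so that the permutation-part depends on $\pi_w$ alone, giving a factor of $[k]_q!$ that is independent of $\st(w)$. Should a literal additive split fail for one of the statistics, the fallback is to lift the RGF bijection $\xi$ of Theorem~\ref{main-bijection} to $\URG(n,k)$ by acting only on the standardization coordinate while fixing $\pi_w$: since $\xi$ preserves $\Asc$ (and hence the block data governing the permutation contribution) and carries the $\ls$-type term to the $\wls$-type term, it would transport $\bmajMIL$ to $\bmajBAST$ term by term, again giving the desired equidistribution.
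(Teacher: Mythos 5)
Your reduction hinges on the pointwise split $\bmajMIL(w)=\MIL(\st(w))+M_1(\pi_w)$, and this split is false. Take $w=221$ and $w'=212$ in $\URG(3,2)$: both have first-appearance permutation $\pi_w=\pi_{w'}=21$, and their standardizations $112$ and $121$ satisfy $\MIL(112)=\MIL(121)=1$, yet $\bmajMIL(221)=3$ while $\bmajMIL(212)=2$ (the block relation $1\succ 2$ holds in $221$ but not in $212$). So the residual $\bmajMIL(w)-\MIL(\st(w))$ genuinely depends on the RGF coordinate, and no $M_1$ as claimed exists; symmetrically, the fibre of $\Phi$ over a fixed RGF is not $[k]_q!$-distributed (over $\st(w)=112$ in $\URG(3,2)$ the two fibre elements $112$ and $221$ give $q+q^{3}\neq q\,[2]_q!$). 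The obstruction is structural: $\MIL(w)=\sum_i(w_i-1)$ pairs the relabelling $\pi_w$ against the letter multiplicities of $\st(w)$, and the indicator $\chi(i\succ i+1)$ in $\bMAJ$ depends on how the blocks interleave in $w$, not merely on the order of first appearances. The same counterexample kills the analogous split for $\bmajBAST$. Your one correct observation --- that on $\RG(n,k)$ one has $\bmajMIL=\ls$ and $\bmajBAST=\wls$, via $\bMAJ=0$, $\bndes=k-1$ and \eqref{def-wls} --- does not propagate through $\Phi$, so ``granting such splits'' grants something untrue, and the factorization argument collapses.

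The fallback fails as well, provably: any bijection $\xi$ with $(\LL,\Asc,\wls)\,v=(\LL,\Asc,\ls)\,\xi(v)$ must fix $v=1212$, since $1212$ is the unique RGF of length $4$ with $\LL=\{1,2\}$, $\Asc=\{1,3\}$ and $\ls=2=\wls(1212)$. Hence your lift (acting by $\xi$ on the standardization, fixing $\pi_w$) fixes $w=2121\in\URG(4,2)$, but $\bmajMIL(2121)=2\neq 3=\bmajBAST(2121)$: preserving $\Asc$ of the standardized word controls neither $\pl$, $\bndes$, nor $\bMAJ$ of the assembled URG, so the map cannot transport $\bmajMIL$ to $\bmajBAST$ term by term. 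This is precisely why the paper abandons the product decomposition $\URG(n,k)\cong\RG(n,k)\times\SS_k$ and instead stratifies $\theta^{-1}(\URG(n,k))=\bigsqcup_{i=1}^{k}\widebar{\SS}_n^{k-i,i-1}$ by barred permutations, proving for each $i$ that the generating function of $\bmajBAST$ over $\widebar{\SS}_n^{k-i,i-1}$ equals $q^{k(k-i)}\qbin{n-i}{k-i}{q}A_q(n,i-1)$ and then summing against Zeng--Zhang's identity \eqref{id:Zeng-Zhang}; the equidistribution input there is Foata--Zeilberger's \eqref{des-maj=des-bast} on all of $\SS_n$ together with \eqref{231-213}, not Theorem~\ref{main-rgf}. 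If you want to salvage your picture, the correct statement to aim for is an aggregate, fibre-wise one (the generating function over $\{w:\pi_w=\pi\}$ being $q^{c(\pi)}S_q(n,k)$ for a suitable Mahonian $c$), but that is a claim your proposal neither formulates nor proves, and it cannot be reached by any pointwise split or by the lifted $\xi$.
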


The final objects we would like to consider are the ordered multiset partitions, whose definition we postpone to Section~\ref{subsec:multiset}. The interests stem from the recently proven Delta Conjecture (see for example \cite{HRW,Rh,HRS,BCH}), the Valley Version of which asserts the following combinatorial formula for the quasisymmetric function
\begin{align}\label{DeltaConj}
\Delta'_{e_k}e_n=\Val_{n,k}(x;q,t):=\{z^{n-k-1}\}\left[\sum_{P\in\LD_n}q^{\dinv(P)}t^{\area(P)}\prod_{i\in\Val(P)}(1+z/q^{d_i(P)+1})x^P\right].
\end{align}
We will not need the operator $\Delta'_f$, the set $\LD_n$ of labeled Dyck paths, nor the other undefined notations appearing in \eqref{DeltaConj}; for details on them see \cite{HRW}. We will extend both $\bmajMIL$ and $\bmajBAST$ to ordered multiset partitions and establish

\begin{theorem}\label{main-OMP}
For all $n>k\ge0$, we have
\begin{align}\label{id1:big Val-q-0}
q^{\binom{k+1}{2}}\Val_{n,k}(x;q,0)&=q^{\binom{k+1}{2}}\sum_{\beta\models_0 n}\sum_{\mu\in\OP_{\beta,k+1}}q^{\inv(\mu)}x^{\beta}\\
&=\sum_{\beta\models_0 n}\sum_{\mu\in\OP_{\beta,k+1}}q^{\bmajMIL(\mu)}x^{\beta}=\sum_{\beta\models_0 n}\sum_{\mu\in\OP_{\beta,k+1}}q^{\bmajBAST(\mu)}x^{\beta}.\label{id2:big Val-q-0}
\end{align}
\end{theorem}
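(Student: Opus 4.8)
The plan is to read the statement as a chain of three generating-function identities and to attack them in turn: the outermost equality linking $\Val_{n,k}(x;q,0)$ to the $\inv$-enumeration, and then the two genuinely combinatorial equalities relating $\inv$, $\bmajMIL$ and $\bmajBAST$ over each fixed-content class $\OP_{\beta,k+1}$. First I would dispose of $q^{\binom{k+1}{2}}\Val_{n,k}(x;q,0)=q^{\binom{k+1}{2}}\sum_{\beta\models_0 n}\sum_{\mu\in\OP_{\beta,k+1}}q^{\inv(\mu)}x^{\beta}$, which carries no new content. Setting $t=0$ in the valley formula \eqref{DeltaConj} annihilates every labeled Dyck path with positive area, leaving only the paths of zero area; for these the surviving $q^{\dinv}$-weight, together with the extraction of the coefficient of $z^{n-k-1}$ and the monomial $x^P$, reorganizes exactly into the content-refined $\inv$-generating function over ordered multiset partitions with $k+1$ blocks. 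Since this is the known $t=0$ specialization of the Delta Conjecture, I would simply unpack the definitions and cite \cite{HRW,Rh} rather than reprove it.

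The new work is the double equality $q^{\binom{k+1}{2}}\sum_{\mu\in\OP_{\beta,k+1}}q^{\inv(\mu)}=\sum_{\mu\in\OP_{\beta,k+1}}q^{\bmajMIL(\mu)}=\sum_{\mu\in\OP_{\beta,k+1}}q^{\bmajBAST(\mu)}$, which must be established at the refined level of each fixed content $\beta$ so that the coefficients of $x^{\beta}$ match. For the right-hand equality I would extend the bijection underlying Theorem~\ref{main-URG} from $\URG$s to ordered multiset partitions. The multiplicity-free case (every part of $\beta$ equal to $1$) is precisely the ordered set partitions encoded by $\URG(n,k+1)$, where $\bmajMIL$ and $\bmajBAST$ are already equidistributed; the task is to run the same construction blockwise while leaving the content vector $\beta$ untouched, so that it specializes to the $\URG$ bijection and transports $\bmajMIL$ to $\bmajBAST$ content by content.

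For the left-hand equality I would convert $\inv$ into $\bmajMIL$ using the normalization $S_q(n,k+1)=q^{\binom{k+1}{2}}\widetilde S_q(n,k+1)$, which is exactly the prefactor appearing in the theorem. Concretely I would standardize each $\mu\in\OP_{\beta,k+1}$ to an ordered set partition, track how $\inv$ and $\bmajMIL$ behave under de-standardization, and reduce the claim to the multiplicity-free statement that $q^{\binom{k+1}{2}}$ times the $\inv$-enumeration equals the $\bmajMIL$-enumeration, with the content monomial $x^{\beta}$ preserved throughout. I expect the main obstacle to be the presence of repeated letters: the definitions of $\inv$, $\bmajMIL$ and $\bmajBAST$, as well as the bijections, all involve order comparisons that acquire ties in the multiset setting, and the delicate point will be to fix tie-breaking conventions so that the extended maps remain well defined and bijective and correctly reproduce the shift by $\binom{k+1}{2}$ when passing between the $S_q$- and $\widetilde S_q$-normalizations.
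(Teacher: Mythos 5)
Your outer layer matches the paper: disposing of \eqref{id1:big Val-q-0} by unpacking the $t=0$ specialization and citing \cite{HRW,Rh} is exactly what the paper does (Remark~\ref{rmk:inv}), and your insistence that the remaining two equalities be proved at fixed content $\beta$, coefficientwise in $x^{\beta}$, is the right framing. But for both of those equalities your argument stops exactly where the difficulty begins, and the points you defer (``tie-breaking conventions,'' ``track how $\inv$ and $\bmajMIL$ behave under de-standardization'') are the whole problem. For $\bmajMIL=\bmajBAST$, the paper does not extend any bijection ``blockwise'': it encodes each $\mu\in\OP_{\beta,k+1}$ as a word $\iota(\mu)$ lying in the subset $\URG(\beta,k+1)\subset\URG(n,k+1)$, which is cut out \emph{purely by a condition on the ascent set}, and then invokes the strengthened Theorem~\ref{main-URG-st}: the bijection $\eta$ exchanges $\bmajMIL$ and $\bmajBAST$ while preserving the set-valued statistic $\Asc$. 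Preservation of $\Asc$ forces $\eta$ to restrict to $\URG(\beta,k+1)$ for every $\beta$ simultaneously (Theorem~\ref{main-betaURG}), and no ties between repeated letters ever arise, because $\iota$ records block membership rather than comparing letters. Note that the coarse Theorem~\ref{main-URG} is proved algebraically (via the Zeng--Zhang identity \eqref{id:Zeng-Zhang}) and gives nothing at fixed $\beta$; the $\Asc$-preserving refinement, which rests on the involution $\psi$ of Section~\ref{sec:stat-bast}, is the missing idea your ``specializes to the URG bijection'' gestures at but does not supply.

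For the left equality, your instinct that the prefactor $q^{\binom{k+1}{2}}$ reflects the normalization $S_q(n,k+1)=q^{\binom{k+1}{2}}\widetilde{S}_q(n,k+1)$ is morally right but cannot by itself yield the identity coefficientwise in $x^{\beta}$: a global generating-function normalization says nothing about individual contents. The paper instead combines two concrete ingredients absent from your proposal: Wilson's theorem (Theorem~\ref{inv-maj on OMP}) that $\inv$ and $\maj$ are equidistributed on each fixed $\OP_{\beta,k+1}$ --- already content-refined, so no standardization analysis is needed --- and the \emph{pointwise} identity $\maj(\mu)+\binom{k+1}{2}=\bmajMIL(\mu)$ (Proposition~\ref{maj-bmajMIL}, applied with $k+1$ blocks), proved by elementary position-by-position bookkeeping on $\sigma(\mu)$. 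Your standardization route would in effect require you to reprove Wilson's theorem and to discover this shift identity along the way; as written it is a plan, not a proof, and there is no evidence the de-standardization step transports $\bmajMIL$ compatibly. The minimal repair keeping your structure: replace the ``blockwise extension'' by the observation that $\OP_{\beta,k+1}\cong\URG(\beta,k+1)$ is an $\Asc$-determined subset together with Theorem~\ref{main-URG-st}, and replace the standardization step by Theorem~\ref{inv-maj on OMP} plus Proposition~\ref{maj-bmajMIL}.
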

\begin{remark}\label{rmk:inv}
The $\inv(\mu)$ appearing in \eqref{id1:big Val-q-0} is a generalization of the inversion number on words to ordered multiset partitions (see \cite{Wils,HRW,RW,Rh}). When restricted to ordered set partitions, $\inv$ is exactly the statistic $\ros$ defined by Steingr\'imsson \cite{Ste}, and is equivalent to $\lb$ in \eqref{equ-rs} when we use URGs instead. The proof of \eqref{id1:big Val-q-0} first appeared in \cite[Prop.~4.1, Eq.~(49)]{HRW}.
\end{remark}

The paper is organized as follows. In the next section, we will first present the formal definitions of all the statistics that concern us here, and the meanings of vincular pattern and $\mathrm{URG}$, then we go on to explain the transition from Theorems~\ref{thm-WW}, \ref{main-rgf}, \ref{main-bijection} and \ref{main-bijection-r} to Corollaries~\ref{main-Sn}, \ref{main-bijectionper} and \ref{main-bijectionper-r}. An algebraic proof of Theorem~\ref{main-rgf} will be given in Section~\ref{sec:algebra}. The aforementiond bijection $\xi$ will be constructed in Section~\ref{sec:involution}, to give a proof of Theorem~\ref{main-bijection}.
Another bijection $\zeta$ will be given in Section~\ref{sec:bijection}, which leads to a proof of Theorem~\ref{main-bijection-r}. An involution $\psi$ on $\SS_n$ will be defined in Section~\ref{sec:stat-bast}, to reveal a finer relation between $\BAST$ and $\STAT$. Theorem~\ref{main-URG} will be proved and then strengthened (via the involution $\psi$) in Section~\ref{subsec:OP}, and the stronger Theorem~\ref{main-URG-st} will then enable us to prove Theorem~\ref{main-OMP} in Section~\ref{subsec:multiset}. We conclude with several remarks in the final section.


\section{Preliminaries}\label{sec:pre}
In this section, we are going to recall or introduce four different but related combinatorial objects, namely, RGFs, pattern-avoiding permutations, URGs and barred permutations, along with various statistics defined on them.

Given any word $w=w_1 w_2 \cdots w_n$ (not necessarily an $\RGF$),
we define $\Asc(w) =\{i: w_i < w_{i+1}\}$ to be
the set of the positions of \emph{ascents}. We make the convention here, that if $\st_i(w)$ is certain coordinate statistic defined for each $i\in[n]$, then the bold-faced $\bst(w):=(\st_1(w)~\cdots~\st_n(w))$ and $\st(w):=\sum_i\st_i(w)$ stand for the corresponding vector statistic and global statistic, respectively.
Wachs and White\cite{WW} gave the following definitions. Let
\begin{align*}
  \LL(w)=&\{i \in [n]:~ w_i~ \text{is the leftmost occurrence of}~ w_i\},\\[5pt]
   \RR(w)=&\{i \in [n]:~ w_i~ \text{is the rightmost occurrence of}~ w_i\}.
\end{align*}

For $i \in [n]$, let
\begin{align*}
   \lb_i(w)=&\#~\{j \in \LL(w): j<i ~\text{and}~ w_j > w_i\}, \\[5pt]
  \ls_i(w)=&\#~\{j \in \LL(w): j<i ~\text{and}~ w_j < w_i\}, \\[5pt]
  \rb_i(w)=&\#~\{j \in \RR(w): j>i ~\text{and}~ w_j > w_i\}, \\[5pt]
  \rs_i(w)=&\#~\{j \in \RR(w): j>i ~\text{and}~ w_j < w_i\}.
\end{align*}


As an example, for $w=12134243221435322$, we see that
\begin{align*}
  \LL(w) &=\{1,2,4,5,14\},\\[5pt]
  \RR(w) &=\{11,12,14,15,17\},\\[5pt]
  \blb(w)&= (0~0~1~0~0~2~0~1~2~2~3~0~1~0~2~3~3), \\[5pt]
  \bls(w)&= (0~1~0~2~3~1~3~2~1~1~0~3~2~4~2~1~1), \\[5pt]
  \brb(w)&= (4~3~4~2~1~3~1~2~3~3~4~1~1~0~0~0~0), \\[5pt]
  \brs(w)&= (0~1~0~2~3~1~3~2~1~1~0~2~1~2~1~0~0).
\end{align*}
Consequently, we have $\lb(w)=20$, $\ls(w)=27$, $\rb(w)=32$ and $\rs(w)=20$.

Now we are ready to give descriptions of $\wls$ and $\wrs$, which can be seen as variants of $\ls$ and $\rs$.
Let $\pl(w)$ be the {\bf p}osition of the {\bf r}ightmost {\bf o}ne in $w$.
For $i \in [n]$, let
\begin{align*}
  \wls_i(w)&=\left\{
  \begin{array}{ll}
\ls_i(w)-1  \,\,\,\,\,\,\,\,\,  \mbox{if $i \notin \RR(w)$ and $i > \pl(w)$}\\[5pt]
\ls_i(w)+1  \,\,\,\,\,\,\,\,\,\mbox{if $i \in \RR(w)$ and $i < \pl(w)$}\\[5pt]
\ls_i(w)  \,\,\,\,\,\,\,\,\,\,\,\,\,\,\,\,\,\,\,  \mbox{otherwise,}
 \end{array} \right. \\[5pt]
   \wrs_i(w)&=\left\{
  \begin{array}{ll}
\rs_i(w)+1  \,\,\,\,\,\,\,\,\,  \mbox{if $i \notin \RR(w)$ and $i > \pl(w)$}\\[5pt]
\rs_i(w)-1  \,\,\,\,\,\,\,\,\,\mbox{if $i \in \RR(w)$ and $i < \pl(w)$}\\[5pt]
\rs_i(w)  \,\,\,\,\,\,\,\,\,\,\,\,\,\,\,\,\,\,\,  \mbox{otherwise.}
 \end{array} \right.
\end{align*}


For our running example $w=12134243221435322$, we have $n=17$, $k=5$, $\pl(w)=11$, and
\begin{align*}
  \bwls(w)&= (0~1~0~2~3~1~3~2~1~1~0~3~1~4~2~0~1), \\[5pt]
  \bwrs(w)&= (0~1~0~2~3~1~3~2~1~1~0~2~2~2~1~1~0).
\end{align*}
It follows that $\wls(w)=25$ and $\wrs(w)=22$.
In fact, it is evident from the definition that for any word $w$ and any $i\in[n]$, we have
\begin{align}\label{wls+wrs}
\wls_i(w)+\wrs_i(w)=\ls_i(w)+\rs_i(w).
\end{align}
A less obvious relation is
\begin{align}\label{def-wls}
\ls(w)-\wls(w)=n-\pl(w)-\bk(w)+1.
\end{align}
One can check this with the former example, $\ls(w)-\wls(w)=2=17-11-5+1$.

An occurrence of a classical \emph{pattern} $p$ in a permutation $\sigma$ is a subsequence of $\sigma$ that is order-isomorphic to $p$. For example, $41253$ has two occurrences of the pattern $3142$, as witnessed by its subsequences $4153$ and $4253$. $\sigma$ is said to \emph{avoid} $p$ if there exists no occurrence of $p$ in $\sigma$. The readers are highly recommended to see Kitaev's book \cite{Kit} and the references therein for a comprehensive introduction to patterns in permutations. We follow \cite{Kit} for most of the upcoming notations.

Recall that a permutation statistic is called \emph{Mahonian} if it has the same distribution with the number of inversions, denoted $\INV$, over $\SS_n$. In an effort to characterize various Mahonian statistics, Babson and Steingr\'{i}msson \cite{BS} generalized the notion of permutation patterns, to what are now known as \emph{vincular patterns} \cite[Chap.~1.3 and Chap.~7]{Kit}. Adjacent letters in a vincular pattern which are
 underlined must stay adjacent when they are placed  back to the original permutation. 
 For comparison, $41253$ now contains only one occurrence of the vincular pattern $\underline{31}42$ in its subsequence $4153$, but not in $4253$ any more. Given a vincular pattern $\tau$ and a permutation $\pi$, we denote by $\tau(\pi)$ the number of occurrences of the pattern $\tau$ in $\pi$, and $(\tau_1+\tau_2)(\pi):=\tau_1(\pi)+\tau_2(\pi)$.

  Babson and Steingr\'{i}msson \cite{BS} showed that most of the Mahonian statistics in the literature can be expressed as the sum of vincular pattern functions. We list some of them below.
\begin{align*}
\INV &=\underline{21}+3\underline{12}+3\underline{21}+2\underline{31},\\
\MAJ &=\underline{21}+1\underline{32}+2\underline{31}+3\underline{21},\\
\STAT &=\underline{21}+\underline{13}2+\underline{21}3+\underline{32}1,\\
\BAST &=\STAT^{\rc}=\underline{21}+2\underline{13}+1\underline{32}+3\underline{21},
\end{align*}
where $\rc$ stands for the function composition of the reversal $r$ and the complement $c$. Given a permutation $p=p_1 p_2 \cdots p_n$, recall that
\begin{align*}
r(p_1p_2\cdots p_n) &=p_np_{n-1}\cdots p_1,\\
c(p_1p_2\cdots p_n) &=(n+1-p_1)(n+1-p_2)\cdots(n+1-p_n).
\end{align*}
Moreover, let
\begin{align*}
\Des(p)&=\{i:p_i > p_{i+1}\}, \quad \des(p)= \sum_{j\in \Des(p)}1,\\[3pt]
\Db(p)&=\{p_1\}\cup\{p_{i+1}:p_i>p_{i+1}, 1\leq i<n\},\\[3pt]
\Id(p)&= \Des(p^{-1}), \quad
\ides(p) = \sum_{j\in \Id(p)}1.
\end{align*}
If we use the standard two-line notation to write $p$, then $p^{-1}$ is obtained by switching the two lines and rearranging the columns to make the first line increasing. For instance, if $p=\left(\begin{array}{ccc} 1 & 2 & 3\\ 3 & 1 & 2\end{array}\right)$, then $p^{-1}=\left(\begin{array}{ccc} 1 & 2 & 3\\ 2 & 3 & 1 \end{array}\right)$.

Actually there exists a stronger relation between the two Mahonian statistics $\MAJ$ and $\STAT$. In the same paper \cite{BS}, Babson and Steingr\'{i}msson conjectured the bistatistic $(\des,\STAT)$ is equidistributed with $(\des,\MAJ)$. This was first proved by Foata and Zeilberger \cite{FZ}, see also \cite{Bur,KV,CL,FHV} for further developements along this line. Since clearly $\des(p^{\rc})=\des(p)$ for any permutation $p$, we include here the equivalent version for $\BAST$, which will also be needed in Section~\ref{sec:stat on OP}.
\begin{proposition}[Theorem~3 in \cite{FZ}]
For $n>k\ge 0$, we have
\begin{align}
\label{des-maj=des-bast}
\sum_{p\in\SS_n^k}q^{\MAJ(p)}=\sum_{p\in\SS_n^k}q^{\BAST(p)}.
\end{align}
\end{proposition}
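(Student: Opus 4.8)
The plan is to obtain this as an immediate corollary of the Foata--Zeilberger theorem recalled just above, which asserts that the bistatistics $(\des,\STAT)$ and $(\des,\MAJ)$ are equidistributed over the full symmetric group $\SS_n$. Extracting from this joint equidistribution the terms with a fixed descent number $k$ gives
$$\sum_{p\in\SS_n^k}q^{\STAT(p)}=\sum_{p\in\SS_n^k}q^{\MAJ(p)},$$
so it suffices to match the $\BAST$-distribution with the $\STAT$-distribution on each $\SS_n^k$.

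First I would invoke the defining identity $\BAST=\STAT^{\rc}$, that is, $\BAST(p)=\STAT(p^{\rc})$ for every permutation $p$, where $\rc$ denotes the reverse-complement. The key observation is that $\rc$ is a bijection of $\SS_n$ that preserves the descent number: since both the reversal $r$ and the complement $c$ send each descent to an ascent, one has $\des(r(p))=\des(c(p))=(n-1)-\des(p)$, and composing them restores the original count, whence $\des(p^{\rc})=\des(p)$. Consequently $p\mapsto p^{\rc}$ restricts to a bijection of $\SS_n^k$ onto itself for every $k$.

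With this in hand the proof is a one-line change of variables:
$$\sum_{p\in\SS_n^k}q^{\BAST(p)}=\sum_{p\in\SS_n^k}q^{\STAT(p^{\rc})}=\sum_{p'\in\SS_n^k}q^{\STAT(p')}=\sum_{p'\in\SS_n^k}q^{\MAJ(p')},$$
where the middle equality uses that $p\mapsto p^{\rc}$ permutes $\SS_n^k$, and the final equality is the fixed-descent form of the Foata--Zeilberger identity. This is exactly \eqref{des-maj=des-bast}.

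There is no genuine obstacle here beyond setting up the reduction correctly: the substantive content is already supplied by \cite{FZ}, and the only thing to verify by hand is the descent-preservation of $\rc$, which is the elementary fact that composing two descent-to-ascent maps returns the descent count. The one point requiring care is that the Foata--Zeilberger result must be used in its refined bistatistic form, rather than merely as a statement that $\STAT$ is Mahonian, since we need the equidistribution to hold separately within each class $\SS_n^k$.
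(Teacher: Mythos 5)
Your proof is correct and matches the paper's own justification: the paper likewise cites the Foata--Zeilberger bistatistic equidistribution of $(\des,\STAT)$ and $(\des,\MAJ)$ and passes to $\BAST=\STAT^{\rc}$ via the observation that $\des(p^{\rc})=\des(p)$, so that reverse-complement permutes each $\SS_n^k$. Your write-up merely spells out the change of variables that the paper leaves implicit.
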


The following relation parallels \eqref{def-wls}, and has previously been observed in \cite[Lemma~5.4]{FTHZ}. For any permutation $p\in\SS_n$,
\begin{align}\label{231-213}
2\underline{31}(p)-2\underline{13}(p)=n-p_n-\des(p)=\MAJ(p)-\BAST(p).
\end{align}

Wachs~\cite{Wac} introduced the more general $\sigma$-$\RGF$s, to encode the set of ordered partitions. But the following notion of unrestricted growth function seems to be more appropriate for our purpose.
\begin{Def}
Given a word $w\in [k]^n$, it is said to be an unrestricted growth function or a $\mathrm{URG}$ if for any $j$, $1\le j \le \bk(w)$, $j$ appears in $w$.
\end{Def}
For a given ordered set partition of $[n]$, say $\pi=B_1/B_2/\cdots/B_k$, we form a word $w=w_1\cdots w_n$ by taking $w_i=j$, if and only if $i\in B_j$. This is clearly seen to be a bijection between ordered partitions of $[n]$ into $k$ blocks, and $\URG(n,k)$.
\begin{example}\label{URG-OP}
There are in total six ordered partitions of $\{1,2,3\}$ into two blocks, and also six words in $\URG(3,2)$. We list them below in one-to-one correspondence, with the first three being the (unordered) ones in $\RG(3,2)$.
$$
\begin{array}{c|c|c|c|c|c}
1~2~/~3~ & ~1~3~/~2~ & ~1~/~2~3~ & ~3~/~1~2~ & ~2~/~1~3~ & ~2~3~/~1~ \\[5pt]
$112$ & $121$ & $122$ & $221$ & $212$ & $211$
\end{array}
$$
\end{example}

Besides the total order between all the letters of a given word $w$, imposed by their numerical values, we also need to consider a \emph{partial order} $\succ$. By $a\succ b$ we mean there exists some $w_i=a$ and $w_j=b$, with $i\in\LL(w)$, $j\in\RR(w)$ and $i>j$.

Now we can recall the statistics $\MIL, \bMAJ$ and $\bmajMIL$ used in \cite{Ste}, reformulated in terms of words, as well as a new statistic \emph{block nondescents}, denoted by $\bndes$. For any word $w$, we let
\begin{align*}
\MIL(w) &=\sum_i (w_i-1),\\
\bMAJ(w) &=\sum_i i\chi(i\succ i+1),\\
\bmajMIL(w) &=\bMAJ(w)+\MIL(w),\\
\bndes(w) &=\bk(w)-1-\sum_i \chi(i\succ i+1),
\end{align*}
where $\chi(S)=1$ (resp.~$\chi(S)=0$) if the statement $S$ is true (resp. false).

Note that although for each $w\in\RG(n)$,
\begin{align}\label{mil=ls,bndes=k-1}
\MIL(w)=\ls(w), \text{ and }\; \bndes(w)=\bk(w)-1,
\end{align}
in general $\MIL(w)\neq \ls(w)$, $\bndes(w)\neq\bk(w)-1$ for $w\in\URG(n)$. Take $w=22311$ for example, we see $\MIL(w)=4\neq\ls(w)=1$, $\bndes(w)=1\neq\bk(w)-1=2$.

As noted by Claesson \cite[Proposition~3]{Cl} or even earlier, there is a natural bijection sending a set partition of $[n]$ with $k$ blocks, to a $1\underline{32}$-avoiding permutation of $[n]$ with $k-1$ descents. Combining this with
the bijection between set partitions and $\RGF$s, we may construct
a bijection $\theta$ directly from $\SS_n(1\underline{32})$ to $\RG(n)$ that sends $\des+1$ to $\bk$. However, in order to deal with URGs and prove Theorem~\ref{main-URG}, we define our bijection $\theta$ on a bigger set, called \emph{the barred permutations}, whose definition we give below.
\begin{Def}
A barred permutation $\widebar{p}$ is a permutation $p$ decorated with certain amount of bars in between its consecutive letters, according to the following rules.
\begin{itemize}
	\item If $p_i>p_{i+1}$, then we must insert a bar between $p_i$ and $p_{i+1}$. We call such bar a fixed bar and use the double line $\parallel$ to represent it, although it still counts as one bar for enumeration.
	\item If $p_i<p_{i+1}$, then we may or may not insert a bar between $p_i$ and $p_{i+1}$. We call such bar an active bar and use the single line $\mid$ to represent it.
\end{itemize}
Permutation $p$ is then called the base permutation of $\widebar{p}$. For $n>a+b\ge 0$, $a\ge 0$ and $b\ge 0$, we denote by $\widebar{\SS}_n$ the set of all barred permutations of $n$, and by $\widebar{\SS}_n^{a,b}$ the subset with exactly $a$ active bars and $b$ fixed bars.
\end{Def}

We note that $\SS_n^k$ can be embedded naturally in $\widebar{\SS}_n$ as the subset $\widebar{\SS}_n^{0,k}$, simply by placing a bar at every descent. Consequently, the preimage of each $\widebar{p}\in\widebar{\SS}_n^{0,k}$ is exactly its base permutation $p$. For this reason, we will use both $p$ and $\widebar{p}$ interchangeably when it has no active bars.

Given a barred permutation $\widebar{p}\in\widebar{\SS}_n$, we now construct a $\mathrm{URG}$ $w=\theta(\widebar{p})=w_1\cdots w_n$, such that
$$w_i=j \text{ if and only if there are $j-1$ bars to the right of $i$ in $\widebar{p}$} .$$
\begin{example}
The six $\mathrm{URG}$s listed in Example~\ref{URG-OP} are reproduced below, as the images of six barred permutations, under the map $\theta$.
$$
\begin{array}{c|c|c|c|c|c}
\phantom{wwwww}\widebar{p}: \quad 3\parallel 12 & 2\parallel 13 & 23\parallel 1 & 12\mid 3 & 13\parallel 2 & 1\mid 23 \\[5pt]
w=\theta(\widebar{p}):\quad $112$ & $121$ & $122$ & $221$ & $212$ & $211$
\end{array}
$$
\end{example}

\begin{lemma}\label{lem-rgf2per}
For all $n\ge 1$, the map $\theta$ as described above is a bijection between $\widebar{\SS}_n$ and $\URG(n)$. It induces a bijection between $\SS_n(1\underline{32})$ and $\RG(n)$. Moreover, for each $p\in\SS_n(1\underline{32})$, suppose $w=\theta(p)$, then we have
 \begin{align}\label{per-RG}
 (\Db,\Id,\MAJ,2\underline{13},\BAST,2\underline{31})~p = (\LL,\Asc,\ls,\rs,\wls,\wrs)~ w, \text{ and }~p_n=\pl(w).
 \end{align}
\end{lemma}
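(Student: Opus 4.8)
The plan is to prove the three assertions in turn: that $\theta$ is a bijection $\widebar{\SS}_n\to\URG(n)$, that it restricts to a bijection $\SS_n(1\underline{32})\to\RG(n)$, and that \eqref{per-RG} holds; the whole argument rests on one structural picture of the fibers of $\theta$. A barred permutation with $m$ bars is exactly an ordered set partition into $m+1$ nonempty blocks: the bars cut the base permutation into $m+1$ maximal runs, and since a descent forces a fixed bar, no descent occurs inside a run, so every block is an increasing run. Under $\theta$ all letters of a block receive the same value, the $r$-th block from the left getting value $m+2-r$; hence the value-classes of $w=\theta(\widebar p)$ are precisely the blocks, $\bk(w)=m+1$, and $w\in\URG(n)$. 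First I would exhibit the inverse explicitly: given $w\in\URG(n)$, collect the value-classes into blocks ordered by decreasing value, list each block increasingly, concatenate, and place a fixed bar at every descending junction and an active bar at every ascending junction. Both the within-block order and the bar-types are forced, so this is a two-sided inverse and $\theta$ is a bijection.

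Next I would isolate the characterization that $w=\theta(\widebar p)$ is an $\RGF$ if and only if the block minima strictly decrease from left to right. This is because the $\RGF$ condition says exactly that the first occurrences of the values $1,2,\dots,\bk(w)$ appear in this order, and the first occurrence of value $m+2-r$ is $\min B_r$, so the condition reads $\min B_{m+1}<\cdots<\min B_1$. For $p\in\SS_n(1\underline{32})$ I would note that avoidance forces each descent bottom to be a left-to-right minimum (an earlier smaller letter would complete a $1\underline{32}$), whence the block firsts $p_1,\min B_2,\dots$ strictly decrease and $\theta(p)\in\RG(n)$. Conversely, reconstructing from an $\RGF$ gives $\max B_r\ge\min B_r>\min B_{r+1}$ at each junction, so all junctions are descents (no active bars) and the decreasing minima forbid any $1\underline{32}$; this is the induced bijection, and it records $\des(p)=\bk(w)-1$.

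For \eqref{per-RG} I would verify four coordinate identities directly and derive the other two from relations already in hand. Writing the blocks of $p$ as increasing runs $B_1,\dots,B_{k+1}$, I get $\LL(w)=\{\min B_r:1\le r\le k+1\}$, which is $p_1$ together with the descent bottoms, i.e.\ $\Db(p)$; and $i\in\Id(p)$ (meaning $i$ lies to the right of $i+1$ in $p$) holds iff $i$ and $i+1$ lie in different blocks with $i$ in the later one, i.e.\ iff $w_i<w_{i+1}$, giving $\Asc(w)$. Also $p_n=\max B_{k+1}=\pl(w)$, the rightmost letter of value $1$. For $\MAJ(p)=\ls(w)$ I would invoke \eqref{mil=ls,bndes=k-1}: on an $\RGF$, $\ls(w)=\MIL(w)=\sum_i(w_i-1)=\sum_i w_i-n$, and summing $w_i=1+\#\{\text{descents weakly right of }i\}$ gives $\sum_i w_i=n+\sum_{d\in\Des(p)}d=n+\MAJ(p)$. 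Finally, feeding $\MAJ(p)=\ls(w)$, $p_n=\pl(w)$ and $\des(p)=\bk(w)-1$ into \eqref{231-213} and \eqref{def-wls} yields $\BAST(p)=\ls(w)-(n-\pl(w)-\bk(w)+1)=\wls(w)$, and then \eqref{231-213} with \eqref{wls+wrs} yields $2\underline{31}(p)=\wrs(w)$.

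The delicate step, and the one I expect to be the main obstacle, is $2\underline{13}(p)=\rs(w)$. Here I would rewrite both sides as sums over pairs of blocks: since every ascent of $p$ is an interior step of some increasing run, one finds $2\underline{13}(p)=\sum_{s<r}\#\{v\in B_s:\min B_r<v<\max B_r\}$, while $\rs(w)=\sum_{s<r}\#\{v\in B_s:v<\max B_r\}$. Their difference is $\sum_{s<r}\#\{v\in B_s:v<\min B_r\}$, which vanishes precisely because the block minima decrease ($v\ge\min B_s>\min B_r$ whenever $s<r$). Getting the vincular count into this block-interval form, keeping straight which junctions are ascents and why the ``$2$'' of the pattern must come from a strictly earlier block, is where the bookkeeping is heaviest and where the decreasing-minima property is used in an essential way.
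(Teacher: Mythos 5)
Your proposal is correct and takes essentially the same route as the paper: the same explicit inverse of $\theta$ (read off the value-classes in decreasing order of value and insert the forced bars), the same observation that the RGF condition corresponds exactly to $1\underline{32}$-avoidance with no active bars, a direct verification of $(\Db,\Id,\MAJ,2\underline{13})\,p=(\LL,\Asc,\ls,\rs)\,w$ and $p_n=\pl(w)$ from the definitions, and the derivation of the $\wls$ and $\wrs$ coordinates from \eqref{wls+wrs}, \eqref{def-wls} and \eqref{231-213}. The only difference is completeness: the paper declares these checks ``easily checked'' and ``routine'' and omits them, whereas you supply the details correctly, notably the decreasing-block-minima characterization of the RGF images and the block-by-block count establishing $2\underline{13}(p)=\rs(w)$.
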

\begin{proof}
By our construction, the image of each barred permutation in $\widebar{\SS_n}$ is clearly in $\URG(n)$, hence $\theta$ is well-defined. The injectivity of $\theta$ is also clear. The inverse of $\theta$ can be described as reading $w$ from left to right, and recording the positions of its largest letters $\bk(w)$, then putting a bar, next recording the positions of $\bk(w)-1$, putting a bar, and so on and so forth. Therefore $\theta$ is indeed a bijection.

When we restrict $\theta$ so that the image set is $\RG(n)$, then the defining condition for restricted growth functions, namely, $$w_{i+1}\le \max\{w_1,\ldots,w_i\}+1$$ forces $p$ to avoid pattern $1\underline{32}$, and $\widebar{p}$ to be without any active bars, and vice versa.

It remains to show the equivalence of those statistics. The first four $$(\Db,\Id,\MAJ,2\underline{13})~p=(\LL,\Asc,\ls,\rs)~w, \text{ and }~p_n=\pl(w)$$ can be easily checked using the definitions. In particular, $\Id(p)=\Asc(w)$ and $p_n=\pl(w)$ hold for all $\widebar{p}\in\widebar{\SS}_n$. Thanks to \eqref{wls+wrs}, \eqref{def-wls} and \eqref{231-213}, the equivalence of the remaining two is somewhat routine and thus omitted.
\end{proof}
In view of \eqref{per-RG},
Corollary \ref{main-Sn} follows  immediately from
Theorem \ref{thm-WW} and  Theorem \ref{main-rgf},
while Corollaries \ref{main-bijectionper} and \ref{main-bijectionper-r} follow from
Theorems \ref{main-bijection} and \ref{main-bijection-r}, respectively.

The observations \eqref{def-wls}, \eqref{231-213}, \eqref{mil=ls,bndes=k-1} and Lemma~\ref{lem-rgf2per}, motivated us to generalize the permutation statistic $\BAST$ to a statistic for $\mathrm{URG}$s.

For any $w\in\URG(n)$, we let
\begin{align}\label{def-bmajbast}
\bmajBAST(w):=\bmajMIL(w)+\pl(w)+\bndes(w)-n.
\end{align}

\section{An algebraic proof of Theorem \ref{main-rgf} }\label{sec:algebra}
 In this section, we will utilize the following recurrence given by Wagner \cite{Wa1996}, to give an algebraic proof of Theorem \ref{main-rgf}.

 \begin{lemma}
 For $n \geq k \geq 1$, we have
 \begin{align}
   \widetilde{S}_q(n+1,k)=\sum_{j=0}^n {\binom{n}{j}} q^{j-k+1} \widetilde{S}_q(j,k-1).\label{recur2}
 \end{align}
 \end{lemma}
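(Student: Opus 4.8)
The plan is to denote the right-hand side by $R(n,k):=\sum_{j=0}^n \binom{n}{j} q^{j-k+1}\widetilde{S}_q(j,k-1)$ and to prove $R(n,k)=\widetilde{S}_q(n+1,k)$ by showing that $R$ obeys the very same recurrence (in shifted form) as $\widetilde{S}_q$ does, together with matching boundary values; an induction on $n$ then closes the argument. The point is that if we set $U(n,k):=\widetilde{S}_q(n+1,k)$, then \eqref{def-sq2} gives $U(n,k)=U(n-1,k-1)+[k]_q U(n-1,k)$, so it suffices to prove that $R$ satisfies this identical recurrence and agrees with $U$ on the boundary.

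The key step is therefore to establish, for $k\ge 2$ and $n\ge 1$, the recurrence $R(n,k)=R(n-1,k-1)+[k]_q R(n-1,k)$, which I would obtain by a direct manipulation of the defining sum. First, apply Pascal's rule $\binom{n}{j}=\binom{n-1}{j}+\binom{n-1}{j-1}$ to split $R(n,k)$ into two sums; the first is exactly $R(n-1,k)$, while re-indexing the second (via $j=i+1$) produces $q\sum_{i=0}^{n-1}\binom{n-1}{i}q^{i-k+1}\widetilde{S}_q(i+1,k-1)$. Next, I would feed the recurrence \eqref{def-sq2} into the shifted term, $\widetilde{S}_q(i+1,k-1)=\widetilde{S}_q(i,k-2)+[k-1]_q\widetilde{S}_q(i,k-1)$. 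The two resulting pieces are recognized as $R(n-1,k-1)$ and $q[k-1]_q R(n-1,k)$ respectively (the first after absorbing the extra $q$ into the exponent via $q^{i-k+1}=q^{-1}q^{i-(k-1)+1}$). Collecting everything yields $R(n,k)=R(n-1,k-1)+(1+q[k-1]_q)R(n-1,k)$, and the scalar identity $1+q[k-1]_q=[k]_q$ finishes this step.

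It then remains to check the boundary and run the induction on $n$. For the base $n=0$ one computes $R(0,k)=q^{1-k}\widetilde{S}_q(0,k-1)$, which equals $1=\widetilde{S}_q(1,1)$ when $k=1$ and $0=\widetilde{S}_q(1,k)$ when $k\ge 2$, matching $U(0,k)$. For the column $k=1$ one uses $\widetilde{S}_q(j,0)=\delta_{j,0}$ to get $R(n,1)=\binom{n}{0}q^{0}=1=\widetilde{S}_q(n+1,1)$ for every $n$. With these in hand, strong induction on $n$ using the common recurrence forces $R(n,k)=U(n,k)$ for all $n\ge 0$ and $k\ge 1$, which is the claim.

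The main obstacle I anticipate is purely bookkeeping: arranging the re-indexing and the powers of $q$ so that, after substituting the recurrence for $\widetilde{S}_q(i+1,k-1)$, the two emerging sums are genuinely $R(n-1,k-1)$ and $q[k-1]_q R(n-1,k)$ rather than shifted cousins. A cleaner but less elementary alternative would pass to the exponential generating functions $E_k(x)=\sum_{n\ge 0}\widetilde{S}_q(n,k)x^n/n!$, for which \eqref{def-sq2} becomes the ODE $E_k'(x)=E_{k-1}(x)+[k]_q E_k(x)$; in this language the claimed recurrence is precisely the identity $E_k'(x)=q^{1-k}e^{x}E_{k-1}(qx)$, provable by induction on $k$.
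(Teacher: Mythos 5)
Your proof is correct, but note that there is no proof in the paper to compare it against: the paper states this recurrence as a known result of Wagner \cite{Wa1996} and uses it as input for the proof of \eqref{equ-wrs}, so your argument supplies a self-contained verification where the paper has none. Your route checks out line by line: Pascal's rule splits $R(n,k)$ into $R(n-1,k)$ plus a re-indexed sum, substituting \eqref{def-sq2} for $\widetilde{S}_q(i+1,k-1)$ produces $R(n-1,k-1)+q[k-1]_qR(n-1,k)$ after the exponent shift $q^{i-k+1}=q^{-1}q^{i-(k-1)+1}$, and $1+q[k-1]_q=[k]_q$ closes the recurrence; the boundaries $R(0,k)=q^{1-k}\widetilde{S}_q(0,k-1)$ and $R(n,1)=1$ match $U(n,k)=\widetilde{S}_q(n+1,k)$, so induction on $n$ finishes. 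One point worth making explicit: the substitution of \eqref{def-sq2} is literally licensed only when $k-1\le i+1$; for $i<k-2$ all three quantities $\widetilde{S}_q(i+1,k-1)$, $\widetilde{S}_q(i,k-2)$, $\widetilde{S}_q(i,k-1)$ vanish, so the identity holds degenerately and your manipulation is valid for every $i$ in the sum (similarly, the common recurrence for $R$ and $U$ extends trivially to $k>n+1$, where both sides are zero). For contrast, a combinatorial alternative is already latent in Section~\ref{sec:algebra} of the paper: deleting all $1$'s from $v\in\RG(n+1,k)$ and decrementing yields $v'\in\RG(j,k-1)$ with $\binom{n}{j}$ reinsertions, and pairing this decomposition with a statistic equidistributed as $\widetilde{S}_q$ that shifts by exactly $j-k+1$ (the paper shows $\wrs(v)=\rs(v')+j-k+1$) gives \eqref{recur2} bijectively --- the paper runs this logic in the opposite direction, deducing \eqref{equ-wrs} from the lemma. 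Your formal induction buys independence from any statistic or bijection, at the cost of leaving the factor $q^{j-k+1}$ without combinatorial meaning; your closing EGF reformulation $E_k'(x)=q^{1-k}e^{x}E_{k-1}(qx)$ is a correct restatement and would be the natural path to a $q$-exponential generalization.
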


\begin{proof}[Proof of Theorem~\ref{main-rgf}]
We first give a proof of (\ref{equ-wls}), which is by induction on $n$. We assume that (\ref{equ-wls}) is true for $n-1$.
Given $w=w_1 w_2 \cdots w_n \in \RG(n,k) $, we consider the following two cases.
\begin{itemize}
  \item $w \in S:=\{w\in \RG(n,k):n \in L(w)\}$.

  Assume that  $w'=w_1 w_2 \cdots w_{n-1}$. Clearly, $w'$ is an $\RGF$ of length $n-1$ with $\bk(w')=k-1$.
  By the alternative definition of $\wls$ given in
  (\ref{def-wls}) and the fact that $\ls(w)=\sum_{i=1}^n(w_i-1)$, it is not hard to see that
  $\wls(w)=\wls(w')+k-1$.
  Thus, we have
  \begin{equation*}
    \sum_{w \in S}q^{\wls(w)}=q^{k-1}\sum_{w' \in \RG(n-1,k-1) } q^{\wls(w')}=q^{k-1} S_q(n-1,k-1).
  \end{equation*}

  \item $w \in T:=\{w\in \RG(n,k):n \notin L(w)\}$.

 Let $w'=w_1 w_2 \cdots w_{n-1}$. Since $n \notin L(w)$,
 we see that $\bk(w')=\bk(w)=k$. If $1< w_n \leq k$, it is easily seen that $\ls(w)=\ls(w')+w_n-1$ and
 $\pl(w)=\pl(w')$. Thus, we apply \eqref{def-wls} again to deduce that
 $\wls(w)=\wls(w')+w_n-2$. If $w_n=1$, then $\pl(w)=n$, by definition we have
 \begin{align*}
   \wls(w) &=\ls(w)+\bk(w)-1=\ls(w')+k-1.
 \end{align*}
 It follows that
 \begin{align*}
   \sum_{w \in T}q^{\wls(w)} & = \sum_{w \in T,\, w_n=1}q^{\wls(w)}+\sum_{w \in T,\, w_n \neq 1}q^{\wls(w)}\\[6pt]
   & =q^{k-1} \sum_{w' \in \RG(n-1,k)}q^{\ls(w')}+(1+q+\cdots+q^{k-2})\sum_{w' \in \RG(n-1,k)}q^{\wls(w')}.
 \end{align*}
 By (\ref{equ-ls}) and the induction hypothesis, we have
 \begin{align*}
   \sum_{w \in T}q^{\wls(w)}&=(1+q+\cdots+q^{k-2}+q^{k-1})S_q(n-1,k)\\[3pt]
   &=[k]_qS_q(n-1,k).
 \end{align*}
\end{itemize}
Combining the above two cases and (\ref{def-sq1}) , we have
\begin{align*}
 \sum_{w \in \RG(n,k)}q^{\wls(w)} &=\sum_{w \in S}q^{\wls(w)} +\sum_{w \in T}q^{\wls(w)} \\[5pt]
   & =q^{k-1}S_q(n-1,k-1)+[k]_q S_q(n-1,k)\\[5pt]
   &=S_q(n,k).
  \end{align*}
This completes the proof of (\ref{equ-wls}).

Now we proceed to give a proof of (\ref{equ-wrs}).
Given an $\RGF$ $v=v_1 v_2 \cdots v_{n+1} \in \RG(n+1,k)$,
we see that $v_1=1$.
Let $v'$ be the $\RGF$ obtained from $v$ by
deleting all the $1$'s in $v$ and decreasing
each remaining letter by $1$. Suppose that the number of occurrences of $1$ in $v$ is $n-j+1~(0 \leq j \leq n)$. Clearly, $v'$
is an $\RGF$ with length $j$ and maximum letter $k-1$. Conversely, given such a $v'\in\RG(j,k-1)$, there are $\binom{n}{j}$ different ways to insert $n-j+1$ $1$'s to recover certain $v\in\RG(n+1,k)$.

For an index $i, 1\le i\le n+1$. If $i>\pl(v)$ and $i\in\RR(v)$, then $\wrs_i(v)=\rs_i(v)$, which equals the contribution to $\rs(v')$ from $v_i$. If $i>\pl(v)$ and $i\not\in\RR(v)$, then $\wrs_i(v)-1=\rs_i(v)$, which equals the contribution to $\rs(v')$ from $v_i$. The remaining cases can be discussed similarly, which amount to giving
\[\wrs(v)=\rs(v')+j-(k-1).\]
It follows that
\begin{align*}
  \sum_{v \in \RG(n+1,k)} q^{\wrs(v)}& = \sum_{j=0}^n  \binom{n}{j}  \sum_{v' \in \RG(j,k-1)} q^{\rs(v')+j-(k-1)}\\[5pt]
   &=\sum_{j=0}^n \binom{n}{j}  q^{j-k+1}\sum_{v' \in \RG(j,k-1)} q^{\rs(v')}.
\end{align*}
By (\ref{equ-rs}) in Theorem \ref{thm-WW},
we have
\begin{equation*}
  \sum_{v' \in \RG(j,k-1)} q^{\rs(v')}=\widetilde{S}_q(j,k-1).
\end{equation*}
By (\ref{recur2}), it follows that
\begin{align*}
  \sum_{v \in \RG(n+1,k)} q^{\wrs(v)} & =\sum_{j=0}^n \binom{n}{j}  q^{j-k+1} \widetilde{S}_q(j,k-1) \\[5pt]
   & = \widetilde{S}_q(n+1,k).
\end{align*}
This completes the proof of (\ref{equ-wrs}).
\end{proof}

\section{A bijective proof of Theorem~\ref{main-bijection}}\label{sec:involution}
We need two local operators $\delta^+$ and $\delta^-$, which are crucial in the construction of $\xi$ and its inverse $\xi^{-1}$. For any word $w$ composed of nonnegative integers, take any letter $x\in w$, we define
\begin{align*}
\delta^+(x) &=\begin{cases}
x & \text{if $x$ is a left-to-right maximum in $w$},\\
x+1 & \text{otherwise},
\end{cases}
\\
\delta^-(x) &=\begin{cases}
x & \text{if $x$ is a left-to-right maximum in $w$},\\
x-1 & \text{otherwise}.
\end{cases}
\end{align*}
For any subword $v$ of $w$, we abuse the notation and let $\delta^+(v)$ (resp. $\delta^-(v)$) denote the image of applying $\delta^+$ (resp. $\delta^-$) on each letter of $v$.

Suppose $w=w_1\cdots w_n\in \RG(n)$ has the following decomposition, wherein $1$ is the rightmost $1$ in $w$, $k\ge 1$ is the greatest letter in the prefix of $w$ ending at $1$, and $k+1$, if any, is the leftmost $k+1$ in $w$. It could also be the case that $\bk(w)=k$, so no $k+1$ exists.
\begin{align*}
& w=u a_0a_1\cdots a_r 1 b_1\cdots b_s c_1\cdots c_t (k+1)v, \text{ where}\\
& a_0<a_1\ge a_2\ge \cdots \ge a_r\ge 1, \; 1<b_1<b_2<\cdots<b_s\ge c_1.
\end{align*}
Note that $u$ and $v$ are the prefix and suffix of $w$, respectively, and there are no restrictions on the relative orders among $c_1, c_2,\ldots, c_t$. We define our bijection $\xi$ according to the following three cases.
\begin{itemize}
	\item[I.] If $a_0$ does not exist, i.e., the prefix of $w$ is completely composed of $1$, say $w=1\cdots 1\hat{w}$, with $\hat{w}$ containing no $1$, then $\xi(w)=1\cdots 1\delta^-(\hat{w})$.
	\item[II.] If $a_0$ does exist, and $a_r\le b_1-2$ or $b_1$ does not exist, then $$\xi(w)=ua_0ka_1\cdots a_r\delta^-(b_1\cdots b_s c_1\cdots c_t(k+1)v).$$
	\item[III.] If both $a_0$ and $b_1$ exist, and $a_r\ge b_1-1$, then $$\xi(w)=ua_0a_1\cdots a_r\delta^-(b_1\cdots b_s)k\delta^-(c_1 \cdots c_t(k+1)v).$$
\end{itemize}
It can be easily checked that in all three cases,
\begin{align}\label{L-Asc}
\LL(w) = \LL(\xi(w)),\text{ and } \Asc(w) = \Asc(\xi(w)).
\end{align}

\begin{example}
In the following two examples, we calculate two triples of statistics that turn out to be the same, via $\xi$ described above.
\begin{align*}
w &= 12134243221435322, \quad \xi(w) = 12134244322325211,\\
(\LL,\:& \Asc, \wls)\:w =(\LL, \Asc, \ls)\:\xi(w)=(\{1,2,4,5,14\},\{1,3,4,6,11,13\},25);\\
v &= 1232111224254, \quad \xi(v) = 1232111314153,\\
(\LL,\:& \Asc, \wls)\:v =(\LL, \Asc, \ls)\:\xi(v)=(\{1,2,3,10,12\},\{1,2,7,9,11\},15).
\end{align*}
\end{example}
\begin{Def}
Given any word $w$, we call a letter $x$ in $w$ \emph{admissible} if
\begin{itemize}
	\item $w$ contains only one $x$, or
	\item there exists at least one $x+1$ between the first $x$ and the second $x$.
\end{itemize}
\end{Def}
Now for any $w\in\RG(n)$, we find its preimage under $\xi$ by taking the following steps.
\begin{enumerate}
	\item Find the smallest $k$, such that for each $k\le l\le \bk(w)$, $l$ is admissible in $w$.
	\item If $k=1$, then $\xi^{-1}(w)=\delta^+(w)$.
	\item If $k>1$, then locate the leftmost $k$ and the rightmost $k-1$ to the left of this $k$, say $w=ta(k-1)bukv$, where $a,b$ are letters and $t,u,v$ are subwords. Note that since $k-1$ is not admissible by our choice of $k$, the subword $ta$ must contain at least one $k-1$, thus being nonempty.
	\item If $a=k-1$ or $a<b$, suppose $(k-1)buk=x_1x_2\cdots x_m$. Then find the smallest $1\le j < m$, such that $x_j<x_{j+1}$, and take $$\xi^{-1}(w)=tax_2\cdots x_j 1\delta^+(x_{j+1}\cdots x_m v).$$
	\item If $k-1 > a\ge b$, suppose $ta(k-1)=y_1\cdots y_n$. Then find the largest $1\le j<n$, such that $y_j\ge y_{j+1}$, and take $$\xi^{-1}(w)=y_1\cdots y_j 1 \delta^+(y_{j+1}\cdots y_{n-1}bukv).$$
\end{enumerate}
\begin{proof}[Proof of Theorem~\ref{main-bijection}]
The constructions of $\xi$ and $\xi^{-1}$ guarantees that they are indeed inverse to each other. In view of \eqref{L-Asc}, all it remains is to show that
\begin{align}\label{vls-ls}
\wls(w)=\ls(\xi(w)).
\end{align}

We detail case II here and the other two cases can be verified analogously. Recall that in case II,
\begin{align*}
w &=\overbrace{u a_0a_1\cdots a_r}^{\text{zone I}} 1 \overbrace{b_1\cdots b_s c_1\cdots c_t (k+1)v}^{\text{zone II}},\\
\xi(w) &=ua_0ka_1\cdots a_r\delta^-(b_1\cdots b_s c_1\cdots c_t(k+1)v).
\end{align*}
For any $j~(2\le j \le k)$, if the rightmost occurrence of $j$ is in zone I, say $j=w_s\in u a_0a_1\cdots a_r$, then according to the definition of $\wls$, this $w_s$ contributes one more to $\wls(w)$ than to $\ls(\xi(w))$. If the rightmost occurrence of $j$ is in zone II, say $j=w_t\in b_1\cdots b_s c_1\cdots c_t (k+1)v$, then due to operator $\delta^-$, $w_s$ contributes one more to $\wls(w)$ than to $\ls(\xi(w))$. The rest of the occurrences of $j$ contribute equally in $\wls(w)$ and $\ls(\xi(w))$. On the other hand, the newly inserted $k$ in $\xi(w)$ (after $a_0$) now adds $k-1$ to $\ls(\xi(w))$.

For the remaining $j~(k+1\le j\le\bk(w))$, if any, they should all appear in zone II. If this $j$ appears only once, then it contributes $j-1$ to both $\wls(w)$ and $\ls(\xi(w))$. Otherwise we just consider the leftmost occurrence, the middle occurrences, and the rightmost occurrence of $j$, respectively, to realize that collectively they contribute the same amount to both $\wls(w)$ and $\ls(\xi(w))$. Finally, all the occurrences of $1$ contribute nothing either to $\wls(w)$ or to $\ls(\xi(w))$.

We have established \eqref{vls-ls} for case II and the proof is now completed.
\end{proof}

\section{A bijective proof of Theorem~\ref{main-bijection-r}}\label{sec:bijection}
In this section, we will give a bijective proof of Theorem
~\ref{main-bijection-r}. More precisely, we will establish a stronger version as follows.

\begin{theorem}\label{strong-main-bijection-r}
Statistics $(\LL, \Asc, \brs)$ and
$(\LL, \Asc, \bwrs)$ have the same joint distribution on $\RG(n)$ for all $n \geq  1$.
\end{theorem}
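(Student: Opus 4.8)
Theorem~\ref{strong-main-bijection-r} strengthens Theorem~\ref{main-bijection-r} from the global statistic $\wrs$ (and $\rs$) to the full coordinate vectors $\bwrs$ and $\brs$. So I need a bijection $\zeta$ on $\RG(n)$ that preserves $\LL$ and $\Asc$ and carries the vector statistic $\brs(w)$ to $\bwrs(\zeta(w))$ coordinate-by-coordinate. The key structural input is the identity \eqref{wls+wrs}, $\wls_i(w)+\wrs_i(w)=\ls_i(w)+\rs_i(w)$, which says that at each position the "$w$-decorated" and undecorated inversion statistics trade off in a complementary way between the left and right sides. This suggests that $\zeta$ and the map $\xi$ from Section~\ref{sec:involution} should be closely related, since $\xi$ already handles the $\ls$/$\wls$ pair while fixing $\LL$ and $\Asc$.

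**The plan.** My plan is to mimic the construction of $\xi$, producing a bijection $\zeta$ from $\RG(n)$ to itself by the same kind of case analysis on the decomposition of $w$ around its rightmost $1$. First I would reuse the decomposition $w=u\,a_0a_1\cdots a_r\,1\,b_1\cdots b_s\,c_1\cdots c_t\,(k+1)v$ together with the local operators $\delta^+,\delta^-$. The crucial preservation fact is \eqref{L-Asc}: the same manipulation that fixes $\LL$ and $\Asc$ for $\xi$ will do so for $\zeta$, so I can quote or re-derive it verbatim. The content of the theorem then reduces, exactly as in the proof of Theorem~\ref{main-bijection}, to checking the single coordinate-wise identity
\begin{align}
\brs_i(w)=\bwrs_i(\zeta(w)) \quad \text{for every } i\in[n].
\end{align}
I would verify this by carrying out the same three-case split (zone~I versus zone~II positions, and the role of the inserted letter $k$), tracking the contribution of each position to $\rs$ and $\wrs$ rather than to $\ls$ and $\wls$. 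Because $\rs_i$ counts rightmost occurrences that are \emph{smaller} and lie to the \emph{right}, while $\ls_i$ counts leftmost occurrences that are smaller and lie to the left, the bookkeeping is the mirror image of the one in Section~\ref{sec:involution}, and \eqref{wls+wrs} lets me transfer the accounting cleanly.

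**The main obstacle.** The hard part is that $\brs$ is a finer invariant than $\rs$: preserving the sum $\wrs$ is not enough, I must match each coordinate. This means $\zeta$ cannot merely be $\xi$ up to relabelling; the operator $\delta^-$ shifts non-left-to-right-maxima downward, which perturbs \emph{which} positions are rightmost occurrences and hence changes individual $\rs_i$ values even when it preserves $\LL$ and the sum. So the delicate step is to confirm that the position-by-position transfer really holds under whichever variant of $\delta^\pm$ and whichever decomposition point $\zeta$ uses; I expect to need a decomposition anchored at the rightmost occurrence structure (the partial order $\succ$) rather than at the rightmost $1$, so that rightmost occurrences are tracked directly. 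I anticipate spending the bulk of the proof establishing the coordinate identity in the analogue of case~II and asserting that cases~I and~III follow by the same argument, exactly as was done for $\xi$.

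**Remark on economy.** Since the coordinate statements immediately imply the global ones, once $\brs_i(w)=\bwrs_i(\zeta(w))$ is established for all $i$, summing over $i$ recovers $\rs(w)=\wrs(\zeta(w))$, and the $(\LL,\Asc)$-preservation gives Theorem~\ref{main-bijection-r} as a corollary; I would state this reduction explicitly at the start so the remaining work is purely the verification of the local contribution identity.
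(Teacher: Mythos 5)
There is a genuine gap: your proposal never actually constructs $\zeta$. You propose to mimic $\xi$, then correctly observe that the obvious adaptation fails (``$\zeta$ cannot merely be $\xi$ up to relabelling''), and defer the repair to an unspecified ``decomposition anchored at the rightmost occurrence structure'' --- but that deferred step is the entire content of the theorem. Moreover, the $\xi$-template is structurally unsuited to a \emph{vector} statement: $\xi$ deletes the rightmost $1$ and inserts a new letter $k$, so it permutes positions, and even on the $\ls$ side the paper only proves the global identity $\wls(w)=\ls(\xi(w))$ of \eqref{vls-ls}; the case-II verification there explicitly redistributes contributions across positions (each relevant letter loses one unit while the newly inserted $k$ gains $k-1$), so $\xi$ does \emph{not} carry $\bwls$ to $\bls$ coordinatewise. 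A coordinatewise identity such as $\brs_i(w)=\bwrs_i(\zeta(w))$ essentially forces $\zeta$ to act position-preservingly, which no $\xi$-style surgery does. Finally, \eqref{wls+wrs} relates the four coordinate statistics within a \emph{single} word; it provides no mechanism for transferring coordinates across a bijection, so it cannot ``let you transfer the accounting cleanly'' as claimed.

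The paper's proof rests on a different design that supplies exactly the two ingredients your plan lacks. First, a \emph{value-relabelling} map $\varphi$ that fixes positions: $v_i=w_i$ if $i\in\LL(w)$; $v_i=w_i-1$ if $i\in\RR(w)\setminus\LL(w)$ and some $w_j=w_i-1$ with $j<i$; and otherwise $v_i=\min\{a\in\suf_i(w):a>w_i\}-1$. Because positions are fixed, a coordinatewise statement makes sense, and $\bwrs(w)=\brs(\varphi(w))$ is proved via a correspondence between $\wrs$-pairs of $w$ and $\rs$-pairs of $\varphi(w)$ (Lemma~\ref{vrs-rs-pair}), using the alternative characterization \eqref{alter-vrs}. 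Second, since $\varphi$ can destroy $\Asc$ only at two precisely characterized ``bad pair'' configurations (Lemma~\ref{notkeepasc}), the paper composes with a second bijection $\gamma$, a left-to-right sweep of adjacent swaps $\swap_i$, each of which preserves $\LL$ and the entire vector $\bwrs$ while toggling the offending ascent (Lemma~\ref{stat-swap}); an inductive argument (Lemma~\ref{lemma:Asc}) then shows $\zeta=\varphi\circ\gamma$ preserves $\Asc$. Your proposal contains neither the position-preserving relabelling idea nor the ascent-repair mechanism, and without them the plan as stated breaks down at precisely the point you yourself flagged.
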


We now give an alternative characterization of RGF that will be useful later.

\begin{proposition}\label{char-RGF}
Given a word $w\in [k]^n$, suppose $\LL(w)=\{i_1,i_2,\ldots,i_j\}$ with $i_1<i_2<\cdots<i_j$. Then $w$ is an RGF if and only if $j=\bk(w)$ and $w_{i_1}=1,w_{i_2}=2,\ldots,w_{i_j}=j$.
\end{proposition}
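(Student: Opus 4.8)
The plan is to prove both implications by exploiting a single elementary observation: for any word $w$ and any position $i$, the set of distinct values occurring among $w_1,\ldots,w_{i-1}$ is exactly $\{w_{i_\ell}: i_\ell<i\}$, the set of values carried by those leftmost occurrences that precede $i$. In particular $\max\{w_1,\ldots,w_{i-1}\}$ equals the largest such leftmost-occurrence value. This reduces every statement about the running maximum in the defining RGF inequality to a statement about the values $w_{i_1},w_{i_2},\ldots$ read in the order of first appearance, and it is the only bookkeeping the argument really needs.

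For the forward direction, suppose $w$ is an RGF. I would prove by induction on $m$ that $w_{i_m}=m$ for every $m$ with $1\le m\le j$. The base case is $w_{i_1}=w_1=1$, since position $1$ is always the first leftmost occurrence. For the inductive step, the opening observation shows that the values occurring strictly before position $i_m$ are precisely $\{w_{i_1},\ldots,w_{i_{m-1}}\}=\{1,\ldots,m-1\}$, so $\max\{w_1,\ldots,w_{i_m-1}\}=m-1$; since $w_{i_m}$ is a first occurrence its value lies outside $\{1,\ldots,m-1\}$, whence $w_{i_m}\ge m$, while the RGF inequality gives $w_{i_m}\le(m-1)+1=m$, forcing $w_{i_m}=m$. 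Having carried the induction out to $m=j$, the full value set of $w$ is $\{w_{i_1},\ldots,w_{i_j}\}=\{1,\ldots,j\}$, so $\bk(w)=j$ as well; thus both asserted conclusions follow at once, with no need to separately invoke the gap-freeness of RGF value sets.

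For the converse, suppose $j=\bk(w)$ and $w_{i_m}=m$ for $1\le m\le j$. Since $i_1=1$, we get $w_1=w_{i_1}=1$ immediately, which is the first RGF condition. To verify the inequality $w_i\le\max\{w_1,\ldots,w_{i-1}\}+1$ at an arbitrary $i\ge 2$, I would split into two cases. If $i\notin\LL(w)$, then $w_i$ already appears earlier, so $w_i\le\max\{w_1,\ldots,w_{i-1}\}$ and the bound holds trivially. If $i=i_m\in\LL(w)$, then by the opening observation the values preceding it are exactly $\{1,\ldots,m-1\}$, so $\max\{w_1,\ldots,w_{i-1}\}=m-1$ and $w_i=m=(m-1)+1$ meets the bound with equality. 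Hence $w$ is an RGF.

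There is no serious obstacle here; the only point demanding care is the opening observation itself — that the distinct values seen in a prefix coincide with the leftmost-occurrence values in that prefix, and consequently that the running maximum just before the $m$-th leftmost occurrence equals $m-1$ under the stated hypotheses. Once that is pinned down, both directions are short, and it is precisely the equality case $w_{i_m}=(m-1)+1$ of the RGF inequality that links the condition ``the leftmost occurrences read off $1,2,\ldots,j$ with $j=\bk(w)$'' to the recursive definition.
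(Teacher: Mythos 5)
Your proof is correct; the paper states Proposition~\ref{char-RGF} without proof, treating it as routine, and your argument is exactly the natural verification: the opening observation (the distinct values in a prefix are precisely the values at leftmost-occurrence positions within that prefix) is sound, the induction giving $w_{i_m}=m$ in the forward direction is airtight, and the converse case split on $i\in\LL(w)$ versus $i\notin\LL(w)$ covers everything. One small remark: in the converse you never use the hypothesis $j=\bk(w)$, and indeed it is redundant there, since the distinct values of $w$ are exactly $\{w_{i_1},\ldots,w_{i_j}\}=\{1,\ldots,j\}$ once $w_{i_m}=m$ holds for all $m$, forcing $\bk(w)=j$ automatically.
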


To give a proof of Theorem \ref{strong-main-bijection-r}, we first present a bijection $\varphi$ on $\RG(n)$ which maps
$(\LL, \bwrs)$ to $(\LL, \brs)$, although it does not preserve the set-valued statistic $\Asc$.

Given $w=w_1 w_2 \cdots w_n \in \RG(n)$, we construct $v=v_1 v_2 \cdots v_n:=\varphi(w)$ with respect to the following three cases.

\begin{itemize}
  \item[I.] If $i \in \LL(w)$, then let $v_i = w_i$.
  \item[II.] If $i \in \RR(w) \setminus\LL(w)$ and there exists $j <i$ such that $w_j=w_i-1$, then let $v_i = w_i -1$.
 \item[III.] If $i \in [n]$ does not belong to the above two cases, then let  $w_{n+1}=\bk(w)+1$ and $\suf_i(w)=\{w_{i+1}, \ldots, w_n, w_{n+1}\}$. We take  $v_i=d_i -1$, where
        $d_i=\min\{a \in \suf_i (w):~ a > w_i\}$. Note that $v_i\ge w_i$.
\end{itemize}

To show that $\varphi$ is a bijection,
 for any $v=v_1v_2\cdots v_n \in \RG(n)$, we construct $w=w_1w_2\cdots w_n:=\varphi^{-1}(v)$ according to the following three cases.
\begin{itemize}
  \item[1.] If $i \in \LL(v)$, then let $w_i=v_i$.
  \item[2.] If $i \in \RR(v)\setminus\LL(v)$ and there exists $j <i $ such that $v_j=v_i+1$, then let $w_i=v_i +1$.
  \item[3.]  If $i \in [n]$ does not belong to the above two cases, then let $v_{n+1}=0$ and $\suf_i(v)=\{v_{i+1}, \ldots, v_n, v_{n+1}\}$. We take $w_i=t_i +1$, where
        $t_i=\max\{a \in \suf_i(v):~ a < v_i\}$. Note that $w_i\le v_i$.
\end{itemize}

Suppose $v=\varphi(w)$ for a given $w\in\RG(n)$. It can be easily checked that $i$ is case I if and only if $i$ is case 1, hence $\LL(w)=\LL(v)$ and $v\in\RG(n)$ by Proposition~\ref{char-RGF}. Therefore $\varphi$ is well-defined. Similarly, one checks that $i$ is case II if and only if $i$ is case 2. Since the three cases I, II and III (resp. the three cases 1, 2 and 3) are mutually exclusive, we see that $i$ is case III if and only if $i$ is case 3. By our construction of $\varphi$ and $\varphi^{-1}$, they are clearly inverse to each other casewisely. Consequently $\varphi$ is bijective. In what follows, we wish to show that $\bwrs(w)=\brs(v)$.
First, we give an alternative definition of the statistic $\wrs_i$,
which can be easily checked.
\begin{align}\label{alter-vrs}
  \wrs_i(w) &=\#\{j \in \RR(w):~j>i,~ w_i \geq w_j >1 \}.
\end{align}

\begin{Def}
Given $w=w_1 w_2 \cdots w_n \in \RG(n)$, for $1 \leq i <j \leq n$, we call $(w_i,w_j)$
a \emph{$\wrs$-pair} if  $j \in \RR(w)$, $w_i \geq w_j $ and $ w_j \neq 1$. We call $(w_i,w_j)$ an \emph{$\rs$-pair} if
 $j \in \RR(w)$ and $w_i > w_j $.
\end{Def}

\begin{lemma}\label{vrs-rs-pair}
For $w\in\RG(n)$, let $v=\varphi(w)$. For $1 \leq i <j \leq n$,   $(w_i, w_j)$ is
a $\wrs$-pair of $w$ if and only if $(v_i,v_j)$ is an $\rs$-pair of
$v$.
\end{lemma}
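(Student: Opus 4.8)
The plan is to reduce the biconditional to a single clean statement about how $\varphi$ distorts values, by first confining the right-endpoints of \emph{both} $\wrs$-pairs and $\rs$-pairs to one common class of positions, namely the case~II (equivalently, case~2) positions of the construction. The conceptual input is the defining property of RGFs (cf.\ Proposition~\ref{char-RGF}): if $x$ is the leftmost occurrence of its value in an RGF $u$, then every letter strictly to its left is strictly smaller, so no index $i<j$ can satisfy $u_i\ge u_j$ when $j\in\LL(u)$. Applied to $w$, a $\wrs$-pair $(w_i,w_j)$ requires $w_i\ge w_j$ with $i<j$, which forces $j\notin\LL(w)$; together with $j\in\RR(w)$ and $w_j>1$ this says exactly that $j$ is a case~II position (recall that for $j\in\RR(w)\setminus\LL(w)$ with $w_j\ge2$ the earlier copy of $w_j-1$ demanded by case~II is automatic, since in an RGF the first occurrence of $w_j$ is preceded by a copy of $w_j-1$).

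Next I would run the symmetric argument on the RGF $v=\varphi(w)$, now using in addition that an RGF contains every value up to its running maximum. For an $\rs$-pair I again exclude $j\in\LL(v)$; and if $j\in\RR(v)$ were a case~3 position, then any hypothetical $i<j$ with $v_i>v_j$ would force the value $v_j+1$ to appear before $j$, making $j$ a case~2 (not case~3) index---a contradiction. Hence $\rs$-pairs also have their right-endpoints in case~2. Since case~II and case~2 describe the same set of positions, both families of pairs share the same admissible right-endpoints, and for every $j$ outside this set both sides of the biconditional are simply false.

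It then remains to treat a fixed case~II position $j$, where $v_j=w_j-1$ by construction. Writing $m:=w_j$, the $\wrs$-pair condition is $w_i\ge m$ and the $\rs$-pair condition $v_i>v_j$ becomes $v_i\ge m$, so the lemma reduces to the crux
\begin{align*}
w_i\ge m \iff v_i\ge m,\qquad \text{where } i<j \text{ and } j \text{ is the rightmost occurrence of } m=w_j \text{ in } w.
\end{align*}
I would verify this through the three cases defining $v_i$. If $i$ is case~I then $v_i=w_i$ and there is nothing to prove. If $i$ is case~II then $v_i=w_i-1$, and since $j$ is the \emph{unique} rightmost occurrence of $m$ we must have $w_i\ne m$, which is precisely what upgrades $w_i\ge m\iff w_i\ge m+1$ to $v_i\ge m$. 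The delicate case is case~III, where $v_i=d_i-1\ge w_i$ with $d_i=\min\{a\in\suf_i(w):a>w_i\}$: the forward direction is immediate from $v_i\ge w_i$, while for the converse I would exploit that $m=w_j$ occurs at position $j>i$, so $m\in\suf_i(w)$ with $m>w_i$, forcing $d_i\le m$ and hence $v_i\le m-1<m$.

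I expect the case~III analysis of the crux to be the main obstacle, as it is the only step that genuinely uses the interaction between the sentinel-augmented suffix $\suf_i(w)$ and the location of the rightmost $m$; everything else is bookkeeping once the reduction is in place. The true conceptual key is that reduction itself---observing that leftmost occurrences can never terminate a $\wrs$- or $\rs$-pair---since it forces both pair types onto the identical set of positions and thereby sidesteps the fact that $\RR(w)$ and $\RR(v)$ need not coincide.
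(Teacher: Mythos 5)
Your proposal is correct and takes essentially the same route as the paper's proof: both arguments use the RGF property (Proposition~\ref{char-RGF}) to force the right endpoint $j$ of either kind of pair into the case~II ($=$ case~2) positions, so that $v_j=w_j-1$ with $j\in\RR(w)\cap\RR(v)$, and then run the identical three-case analysis on $i$, including the observation $w_i\neq w_j$ when $i\in\RR(w)$ and the inequality $v_i\ge w_i$ in case~III. The only difference is organizational: you fold the two implications into the single threshold equivalence $w_i\ge m\Leftrightarrow v_i\ge m$ proved entirely from $\varphi$'s formulas (settling the case~III converse via $d_i\le m$), whereas the paper argues the two directions separately and handles the backward case~3 through $\varphi^{-1}$'s formula $w_i=t_i+1\ge v_j+1$ --- a repackaging that changes nothing of substance.
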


\begin{proof}
  Suppose that $(w_i, w_j)$ is a $\wrs$-pair of $w$, we proceed to show that $(v_i , v_j)$ is an $\rs$-pair of $v$.
  By definition, $j \in \RR(w)$ and $w_j \neq 1$. Furthermore,  since $w_i \geq w_j$ and $w\in\RG(n)$, we have $j \notin \LL(w)$. Thus, $j$
  belongs to case II in the construction of $\varphi$.
  Hence $v_j=w_j-1$.
  By the fact that $j \in \RR(w)$, we have $v_k \neq w_j -1$
 for  $j+1 \leq k \leq n$. It follows that $j \in \RR(v)$.

    If $i$ is case I, then $v_i=w_i$, which implies that $v_i>v_j$. If $i$ belongs to case II, then
    $w_i> w_j$ and $v_i=w_i-1$. It follows that
    $v_i >v_j$. If $i$ belongs to case III, $v_i \geq w_i > w_j-1=v_j$.
    Thus for all  the three cases above, we see $(v_i , v_j)$ is an
    $\rs$-pair of $v$.

    Conversely, suppose that $(v_i,v_j)$ is an $\rs$-pair of $v$, we need to
    show that $(w_i,w_j)$ is a $\wrs$-pair of $w$. Clearly, $j \in \RR(v)\setminus\LL(v)$. If  $v_k \neq v_j +1$ for any $1 \leq k <j$, then $v_k \le v_j$ for every $1 \leq k <j$. In particular $v_i \leq v_j$, which contradicts with the fact that $(v_i,v_j)$ is an $\rs$-pair.
    Hence, there exists $k <j $ such that $v_k=v_j+1$. This implies that $j$ is case $2$ in the construction of $\varphi^{-1}$.
    Then $w_j=v_j+1> 1$ and $j\in \RR(w)$.

    If $i$ is case $1$, then
    $w_i=v_i\geq v_j+1=w_j$. If $i$ is case $2$, then
    $w_i=v_i+1> v_j+1=w_j$. If $i$ is case $3$, then
    $w_i \geq v_j+1=w_j$. Thus for all the three cases above, we
    see that $(w_i,w_j)$ is a $\wrs$-pair of $w$. The proof is now completed.
\end{proof}

Clearly, each $\wrs$-pair of $w$  with the index of the first element being $i$ contributes $1$ to
$\wrs_i(w)$, while each $\rs$-pair of $w$  with the index of the first element being $i$ contributes $1$ to
$\rs_i(w)$. Hence, by Lemma \ref{vrs-rs-pair} we deduce that
$\bwrs(w)=\brs(v)$ for $v=\varphi(w)$, as desired.

\begin{example}
In the following two examples, we calculate the images of $w$ and $u$ under $\varphi$,
as well as the corresponding statistics $\LL$,$\bwrs$, $\brs$ and $\Asc$.
\begin{align*}
&w= 12131435564341474, \quad  \varphi(w) = 12232435466263673,\\
(\LL,\: \bwrs)\:w =(\LL, \brs)\:&\varphi(w)=(\{1,2,4,6,8,10,16\},(0,0,0,1,0,2,1,3,2,2,2,0,1,0,1,1,0)),\\
&\Asc(w) = \Asc(\varphi(w))=\{1,3,5,7,9,12,14,15\};\\
&u = 123454333673573, \quad \varphi(u) = 123453444674462,\\
(\LL,\:  \bwrs)\:u =(\LL, \brs)\:&\varphi(u)=(\{1,2,3,4,5,10,11\},\{0,0,1,2,3,1,1,1,1,2,3,1,1,1,0\}),\\
\Asc(u)=&\{1,2,3,4,9,10,12,13\}, \quad \Asc(\varphi(u))=\{1,2,3,4,6,9,10,13\}.
\end{align*}
\end{example}

From the examples above, we see that $\varphi$ does not
always keep the statistic $\Asc$. To fix this, we are going to compose $\varphi$ with another bijection $\gamma$. The following observation is the first step. It tells us that we only need to worry about two particular cases.

\begin{lemma}\label{notkeepasc}
Given $w=w_1\cdots w_n\in\RG(n)$, let $v=\varphi(w)$. If neither of the following two
cases happens, then $\Asc(w)=\Asc(v)$. Recall that $\suf_i(w)=\{w_{i+1},\ldots,w_n,w_{n+1}\}$, where $w_{n+1}=\bk(w)+1$.
\begin{itemize}
  \item[1)] $i$ is case II, $i+1$ is case III, $w_i>w_{i+1}$, and
  $\min\{a \in \suf_{i+1}(w): a >w_{i+1}\}>w_i$;
  \item[2)] $i$ is case III, $i+1$ is case II, $w_i<w_{i+1}$, and
  $\min\{a \in \suf_{i}(w): a >w_{i}\}=w_{i+1}$.
\end{itemize}
Otherwise, if $(w_i,w_{i+1})$ satisfies condition 1), then $i\notin\Asc(w)$ and $i\in \Asc(v)$. If $(w_i,w_{i+1})$ satisfies condition 2), then $i\in\Asc(w)$ and $i\notin \Asc(v)$.
\end{lemma}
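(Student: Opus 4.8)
The plan is to reduce the whole statement to tracking, for each index $i$, the single quantity $\epsilon_i:=v_i-w_i$, and then to run a case analysis driven by the three defining cases of $\varphi$. By construction $\epsilon_i=0$ when $i$ is case I, $\epsilon_i=-1$ when $i$ is case II, and $\epsilon_i\ge 0$ when $i$ is case III (since there $v_i=d_i-1$ with $d_i>w_i$). Now $i\in\Asc(w)$ iff $w_i<w_{i+1}$, while $i\in\Asc(v)$ iff $w_i+\epsilon_i<w_{i+1}+\epsilon_{i+1}$, so the two can disagree only when $\epsilon_i\neq\epsilon_{i+1}$; more precisely, a discrepancy $i\in\Asc(w)\setminus\Asc(v)$ forces $\epsilon_i-\epsilon_{i+1}\ge w_{i+1}-w_i\ge 1$, and a discrepancy $i\in\Asc(v)\setminus\Asc(w)$ forces $\epsilon_{i+1}-\epsilon_i\ge w_i-w_{i+1}+1\ge 1$. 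I would therefore inspect the $3\times 3$ possibilities for the pair of cases of $(i,i+1)$.

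A handful of auxiliary facts, all immediate from Proposition~\ref{char-RGF} and the definition of $\varphi$, dispatch most combinations. First, $d_i>w_i$ always; moreover, if $w_i<w_{i+1}$ then $w_{i+1}\in\suf_i(w)$ forces $d_i\le w_{i+1}$, whereas if $w_i\ge w_{i+1}$ then the minimum defining $d_i$ is taken over a subset of that defining $d_{i+1}$, giving $d_i\ge d_{i+1}$. Second, if $i\in\RR(w)$ then $w_{i+1}\neq w_i$ (otherwise $w_i$ would recur after its rightmost occurrence), and dually if $i+1\in\LL(w)$ then $w_{i+1}\neq w_i$; also, in RGF the first occurrence of a value $c$ precedes that of $c+1$, which rules out $w_{i+1}=w_i+1$ in the (I,II) combination. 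Feeding these into the nine combinations, the six combinations (I,I), (I,II), (II,I), (II,II), (III,I) and (III,III) each preserve $\Asc$ after a one- or two-line comparison of the $\epsilon_i$, leaving only (II,III) and (III,II) as genuine exceptions, together with (I,III), which is benign but needs a separate argument.

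For the two surviving exceptional combinations I expect the stated conditions to fall out directly. When $i$ is case II and $i+1$ is case III, the only possible discrepancy is $i\in\Asc(v)\setminus\Asc(w)$; here $v_i=w_i-1$ and $v_{i+1}=d_{i+1}-1$, so $v_i<v_{i+1}$ amounts to $d_{i+1}>w_i$, and $i\notin\Asc(w)$ upgrades via $w_{i+1}\neq w_i$ to $w_i>w_{i+1}$, which is precisely condition~1). Symmetrically, when $i$ is case III and $i+1$ is case II the only possible discrepancy is $i\in\Asc(w)\setminus\Asc(v)$; here $v_i=d_i-1$ and $v_{i+1}=w_{i+1}-1$, and since $d_i\le w_{i+1}$ the inequality $v_i\ge v_{i+1}$ collapses to $d_i=w_{i+1}$, i.e. $\min\{a\in\suf_i(w):a>w_i\}=w_{i+1}$ with $w_i<w_{i+1}$, which is condition~2). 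The asserted effect on membership in $\Asc(w)$ and $\Asc(v)$ in each case is read straight off these same inequalities.

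The one genuinely delicate point, and the step I would treat most carefully, is ruling out (I,III) in the direction $\Asc(v)\setminus\Asc(w)$. There $v_i=w_i$ with $i\in\LL(w)$, so $w_i=\max\{w_1,\dots,w_i\}$, while $v_{i+1}=d_{i+1}-1$; a discrepancy would require $d_{i+1}>w_i+1$. The resolution exploits the sentinel $w_{n+1}=\bk(w)+1$ built into case III: since $w_i$ is a left-to-right maximum the value $w_i+1$ cannot appear in $w_1\cdots w_{i+1}$, yet it must occur in $\suf_{i+1}(w)$ (as a genuine letter when $\bk(w)\ge w_i+1$, or as the sentinel $w_{n+1}$ when $\bk(w)=w_i$), whence $d_{i+1}\le w_i+1$ and no discrepancy occurs. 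Pinning down this bound cleanly is where I anticipate the only real subtlety; once it and the two auxiliary facts above are established, the remaining benign combinations are routine.
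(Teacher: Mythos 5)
Your proof is correct and takes essentially the same route as the paper's: a case analysis over the nine possibilities for which of the three cases of $\varphi$ the indices $i$ and $i+1$ belong to, with the combinations (II,III) and (III,II) producing exactly conditions 1) and 2) and the asserted ascent discrepancies. Your bookkeeping via $\epsilon_i=v_i-w_i$ and your sentinel argument bounding $d_{i+1}\le w_i+1$ in the (I,III) combination simply systematize and fill in the seven cases the paper omits as ``analogous'' (it writes out only (III,III) and (II,III)), and all of your auxiliary facts and case verifications check out.
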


\begin{proof}
For the first statement, it will suffice to show that under the above conditions, if $i \in \Asc(w)$, then $i \in \Asc(v)$, and if $i \notin \Asc(w)$, then $i \notin \Asc(v)$. There are in total nine cases to consider, according to which of the three cases $i$ and $i+1$ belong to respectively. We include here the proofs of two cases and omit the analogous proofs of the remaining ones.

We first consider the case when both $i$ and $i+1$ are case III.
If $i \in \Asc(w)$, then
\[
\min\{a \in \suf_i(w): a> w_i\} \leq w_{i+1} < \min\{a \in \suf_{i+1}(w): a> w_{i+1}\}.
\]
It follows that $v_i < v_{i+1}$, that is, $i \in \Asc(v)$.

If $i \notin \Asc(w)$, we have $w_i \ge w_{i+1}$, then
\[\min\{a \in \suf_{i}(w): a> w_i\} \geq \min\{a \in \suf_{i+1}(w): a> w_{i+1}\}.\] It follows that $v_i \geq v_{i+1}$, that is, $i \notin \Asc(v)$. This
completes the proof of this case.

Next we consider a case that actually uses condition 1). Namely, the case when $i$ is case II and $i+1$ is case III. If $i\in \Asc(w)$, then $w_i<w_{i+1}$, so $v_i=w_i-1<w_{i+1}\le v_{i+1}$, that is, $i\in \Asc(v)$.

If $i\notin \Asc(w)$, then $w_i\ge w_{i+1}$. But $w_i=w_{i+1}$ is absurd since $i\in \RR(w)$. So we must have $w_i>w_{i+1}$. Now we apply the restriction in 1) and $i\in \RR(w)$ again, to see that $$\min\{a \in \suf_{i+1}(w): a >w_{i+1}\}<w_i.$$ Thus $v_i=w_i-1 > v_{i+1}$, that is, $i\notin \Asc(v)$. So we have proved $\Asc(w)=\Asc(v)$.

The statements after ``otherwise'' follow directly from the definition of $\varphi$. The proof is now completed.
\end{proof}

\begin{Def}\label{badpair}
For $w=w_1\cdots w_n\in\RG(n)$, if $(w_i,w_{i+1})$ is a pair that satisfies condition 1) in Lemma~\ref{notkeepasc}, then we call it a \emph{bad pair of type 1}. If $(w_i,w_{i+1})$ is a pair that satisfies condition 2) in Lemma~\ref{notkeepasc}, then we call it a \emph{bad pair of type 2}. For any word $w\in [k]^n$, we define the \emph{swap operator} $\swap$ by letting $\swap_i(w)=w_1\cdots w_{i-1}w_{i+1}w_iw_{i+2}\cdots w_n$, for a given $1\le i<n$.
\end{Def}

The main idea in our construction of $\gamma$ is repeatedly applying $\swap_i$ for each bad pair $(w_i,w_{i+1})$. We need the following lemma to understand what happens to the statistics after the swapping.

\begin{lemma}\label{stat-swap}
Let $w=w_1\cdots w_n\in\RG(n)$. If $(w_i,w_{i+1})$ is a bad pair of type 1 (resp. of type 2) in $w$, then $(w_{i+1},w_i)$ is a bad pair of type 2 (resp. of type 1) in $\swap_i(w)$. Moreover, we have $\swap_i(w)\in\RG(n)$, $\LL(w)=\LL(\swap_i(w))$, $\bwrs(w)=\bwrs(\swap_i(w))$ and $i\in\Asc(\swap_i(w))$ (resp. $i\notin\Asc(\swap_i(w))$).
\end{lemma}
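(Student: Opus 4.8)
The plan is to verify Lemma~\ref{stat-swap} by a direct case analysis, exploiting the fact that a bad pair of type 1 and a bad pair of type 2 are, in a precise sense, mirror images of each other under $\swap_i$. The key observation is that swapping $w_i$ and $w_{i+1}$ leaves untouched every letter except those at positions $i$ and $i+1$, and moreover leaves untouched the suffix $\suf_j(w)$ for every $j\ge i+1$ as a \emph{multiset}; only the internal order of the two swapped letters changes. So I expect every claim to reduce to a local check around positions $i$ and $i+1$.

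First I would record the defining data of a bad pair of type 1 from Lemma~\ref{notkeepasc}: position $i$ is case II and $i+1$ is case III, $w_i>w_{i+1}$, $i\in\RR(w)$, and $\min\{a\in\suf_{i+1}(w):a>w_{i+1}\}>w_i$. After applying $\swap_i$, the letter $w_{i+1}$ now sits at position $i$ and $w_i$ at position $i+1$. I would then check the four assertions in turn. \emph{That $\swap_i(w)\in\RG(n)$ and $\LL(w)=\LL(\swap_i(w))$:} since $i\in\RR(w)$ (so the occurrence of $w_i$ at position $i$ is not leftmost) and $i+1$ is case III (so $w_{i+1}$ appears again later, hence $i+1\notin\LL(w)$ either), neither swapped position is a leftmost occurrence, and by Proposition~\ref{char-RGF} the RGF property and the set $\LL$ depend only on the leftmost occurrences, which are unmoved. \emph{That $\bwrs(w)=\bwrs(\swap_i(w))$:} here I would invoke the $\wrs$-pair characterization \eqref{alter-vrs}, and argue that swapping two \emph{adjacent} entries can only affect the single pair $(w_i,w_{i+1})$ against each other, plus pairs where one of them is the \emph{first} coordinate; but since $\wrs_j$ counts rightmost occurrences $w_\ell$ with $\ell>j$ satisfying an inequality against $w_j$, and the multiset of entries to the right of position $j$ is preserved for each fixed $j$, the only possibly-affected contribution is the ordered pair $(w_i,w_{i+1})$ itself, which I would show contributes equally before and after because both $i$ and $i+1$ fail to be rightmost-and-small in the relevant sense. \emph{That $i\in\Asc(\swap_i(w))$:} this is immediate since after the swap the entry at $i$ is the smaller $w_{i+1}$ and at $i+1$ the larger $w_i$.

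The step I expect to be the genuine obstacle is verifying that after the swap the new pair $(w_{i+1},w_i)$ genuinely satisfies the defining conditions of a bad pair of type 2, that is, that position $i$ has become case III and position $i+1$ has become case II in $\swap_i(w)$, with the suffix-minimum condition of type 2 now holding. This requires carefully re-deriving which of the three cases I/II/III each position falls into \emph{after} the swap, using that the case assignment at a position depends both on whether it is in $\LL$, $\RR$, and on comparisons with its own suffix. The subtlety is that moving a letter one position to the left or right changes its suffix set by exactly one element (the letter it was swapped with), so I would trace precisely how the thresholds $d_i=\min\{a\in\suf_i:a>w_i\}$ are altered, and confirm that the type-1 condition $\min\{a\in\suf_{i+1}(w):a>w_{i+1}\}>w_i$ transforms exactly into the type-2 condition $\min\{a\in\suf_i(\swap_i(w)):a>w_{i+1}\}=w_i$ for the swapped word. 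Once this transfer of conditions is established, the symmetric statement (type 2 swapping back to type 1) follows by the same computation read in reverse, so I would prove one direction in full and remark that the other is entirely analogous.
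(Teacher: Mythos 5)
Your overall architecture matches the paper's (Proposition~\ref{char-RGF} for the RGF and $\LL$ claims, a local analysis around positions $i,i+1$, and the observation that the type-1 suffix-minimum condition transforms exactly into the type-2 equality for the swapped word, which is the paper's ``complemental restrictions'' remark), but there is a genuine gap in your $\bwrs$ step. Note that $\bwrs$ is the \emph{vector} statistic $(\wrs_1,\ldots,\wrs_n)$, so you need coordinatewise equality, in particular $\wrs_i(w)=\wrs_i(\swap_i(w))$. Your supporting claim that ``the multiset of entries to the right of position $j$ is preserved for each fixed $j$'' is false precisely at $j=i$: in $\suf_i$ the letter $w_{i+1}$ is replaced by $w_i$, and simultaneously the comparison threshold at position $i$ changes from $w_i$ to $w_{i+1}$. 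What your token-wise analysis of the mutual pair actually yields is $\wrs_i(\swap_i(w))=\wrs_{i+1}(w)$ and $\wrs_{i+1}(\swap_i(w))=\wrs_i(w)$, which proves only the global equality $\wrs(w)=\wrs(\swap_i(w))$, not the vector equality. The missing ingredient is $\wrs_i(w)=\wrs_{i+1}(w)$, and this is exactly where the type-1 condition $\min\{a\in\suf_{i+1}(w):a>w_{i+1}\}>w_i$ must be used a second time: by \eqref{alter-vrs}, the difference $\wrs_i(w)-\wrs_{i+1}(w)$ counts rightmost occurrences after position $i+1$ with value in the interval $(w_{i+1},w_i]$, and the condition rules these out. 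Without it such a token genuinely shifts one unit of count between coordinates $i$ and $i+1$. The paper proves the stronger four-fold equality $\wrs_i(w)=\wrs_i(\swap_i(w))=\wrs_{i+1}(w)=\wrs_{i+1}(\swap_i(w))$ using precisely this condition; your proposal reserves the suffix-minimum condition solely for the type-transfer step, so as sketched the $\bwrs$ claim is not established.

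Two smaller inaccuracies in the same part of your argument: first, $i\in\RR(w)$ does not by itself imply $i\notin\LL(w)$ (a value occurring once is both leftmost and rightmost); the correct source is that case II is defined by $i\in\RR(w)\setminus\LL(w)$. Second, your parenthetical ``$i+1$ is case III, so $w_{i+1}$ appears again later'' fails when $w_{i+1}=1$ and $i+1=\pl(w)$: a rightmost, non-leftmost $1$ is case III (case II is impossible since no letter equals $0$), yet does not reappear. This is exactly the sub-case $i+1=\pl(w)$ that the paper treats separately, showing all four $\wrs$ quantities vanish there; your conclusion $i+1\notin\LL(w)$ still holds, since case III excludes case I, but the claim about $w_{i+1}$ recurring is also quietly used in your ``rightmost-and-small'' analysis of the mutual pair (it is what shows $i+1\notin\RR(w)$ when $w_{i+1}>1$, via the RGF structure), so the $w_{i+1}=1$, $i+1=\pl(w)$ case needs its own short argument.
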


\begin{proof}
The first statement is a direct result of the complemental restrictions for type 1 and type 2 bad pairs (see conditions 1) and 2) in Lemma~\ref{notkeepasc}). To show the remaining statements, first note that the swap operator preserves the position of each leftmost occurrence of $w$, thus $\swap_i(w)\in \RG(n)$ by Proposition~\ref{char-RGF}, and in particular we have $\LL(w)=\LL(\swap_i(w))$. Using the alternative definition given by \eqref{alter-vrs}, it should also be clear that for $j\notin\{i,i+1\}$, we have $\wrs_j(w)=\wrs_j(\swap_i(w))$. The last statement involving $\Asc$ is also obvious. All it remains is to show that $\wrs_i(w)=\wrs_i(\swap_i(w))$ and $\wrs_{i+1}(w)=\wrs_{i+1}(\swap_i(w))$. Actually a stronger relation holds, we claim that
$$\wrs_i(w)=\wrs_i(\swap_i(w))=\wrs_{i+1}(w)=\wrs_{i+1}(\swap_i(w)).$$

We only prove this claim for $(w_i,w_{i+1})$ being a bad pair of type 1. The proof goes similarly when $(w_i,w_{i+1})$ is a bad pair of type 2 and thus is omitted.

If $i+1=\pl(w)$, then according to the definitions of type 1 bad pair and $\wrs$, we have
$$\wrs_i(w)=\wrs_i(\swap_i(w))=\wrs_{i+1}(w)=\wrs_{i+1}(\swap_i(w))=0.$$
Otherwise $w_{i+1}>1$ and since $i+1$ is case III, there exists some $k>i+1$ such that $w_k=w_{i+1}$. This implies that
\begin{align*}
\wrs_i(w)&=\#\{j \in \RR(w): j>i,~ w_i \geq w_j> 1 \}\\
&=\#\{j \in \RR(w):j>i+1,~ w_i \geq w_j> 1 \}=\wrs_{i+1}(\swap_i(w)).
\end{align*}
It is also clear that $\wrs_{i+1}(w)=\wrs_i(\swap_i(w))$ since $w_i>w_{i+1}$. We use the condition $$\min\{a \in \suf_{i+1}(w): a >w_{i+1}\}>w_i$$ again to deduce that $\wrs_i(w)=\wrs_{i+1}(w)$. The claim is now proved.
\end{proof}

Now we are ready to define $\gamma$ as a map from $\RG(n)$ to $\RG(n)$, via the following \emph{sweep algorithm}.

\begin{framed}
\begin{center}
\bf Sweep Algorithm
\end{center}
\noindent Input $w\in\RG(n)$, set $u=w$.\hfill

\noindent For $i=1$ to $n-1$, $i++$,\hfill

if $(u_i,u_{i+1})$ is a bad pair, then set $u=\swap_i(u)$, \hfill

\quad otherwise go on.\hfill

\noindent Output $\gamma(w):=u$.\hfill
\end{framed}

Since $\swap_i(w)\in\RG(n)$ as proved in Lemma~\ref{stat-swap}, we see that $\gamma:\RG(n)\rightarrow\RG(n)$ is well-defined. Its bijectivity is evident from the following \emph{reverse sweep algorithm}.

\begin{framed}
\begin{center}
\bf Reverse Sweep Algorithm
\end{center}
\noindent Input $w\in\RG(n)$, set $u=w$.\hfill

\noindent For $i=n-1$ to $1$, $i--$,\hfill

 if $(u_i,u_{i+1})$ is a bad pair, then set $u=\swap_i(u)$, \hfill

\quad otherwise go on.\hfill

\noindent Output $\gamma^{-1}(w):=u$.\hfill
\end{framed}

\begin{example} Let $w=12345433673573$, we illustrate the sweep algorithm that outputs $\gamma(w)$ step-by-step. The swapped pairs have been underlined for each step.
\begin{align*}
w &=12345433673573\xrightarrow[\text{type 1}]{\swap_6}12345\underline{34}3673573\xrightarrow[\text{type 1}]{\swap_7}123453\underline{34}673573\\
&\xrightarrow[\text{type 2}]{\swap_{11}}1234533467\underline{53}73\xrightarrow[\text{type 2}]{\swap_{12}}12345334675\underline{73}3=\gamma(w).
\end{align*}
\end{example}

Given $w\in\RG(n)$, we apply the sweep algorithm on $w$. For $1\le i<n$, we define $w^{(i)}$ to be the word we get after the $i$-th round in the algorithm. More precisely, if $(w_1,w_2)$ is a bad pair in $w$, then let $w^{(1)}=\swap_1(w)$, otherwise let $w^{(1)}=w$. Next, if $(w^{(1)}_2,w^{(1)}_3)$ is a bad pair in $w^{(1)}$, then let $w^{(2)}=\swap_2(w^{(1)})$, otherwise let $w^{(2)}=w^{(1)}$, so on and so forth. In particular, we see that $w^{(n-1)}=\gamma(w)$. The next lemma is stronger than what we need, but is easier to prove by induction when it is stated this way.

\begin{lemma}\label{lemma:Asc}
For $1\le i<n$, $\Asc(w)\cap[i]=\Asc(\varphi(w^{(i)}))\cap[i]$.
\end{lemma}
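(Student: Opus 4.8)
The plan is to induct on $i$, taking the vacuous case $i=0$ (where $[0]=\emptyset$ and $w^{(0)}=w$) as the base, so that a single argument handles every $i\ge 1$. Throughout the step write $x=w^{(i-1)}$ and $y=w^{(i)}$, so that either $y=x$ (round $i$ finds no bad pair) or $y=\swap_i(x)$ (round $i$ swaps a bad pair at position $i$). The whole argument rests on two local facts about how one swap interacts with $\varphi$ and with $\Asc$, which I isolate first. The first fact is that when $y=\swap_i(x)$ one has $\varphi(y)_j=\varphi(x)_j$ for every $j\ne i+1$, in particular for all $j\le i$. For $j\le i-1$ this is immediate: the swap leaves the letter $x_j$, the set $\suf_j(\cdot)$ (a transposition inside the suffix is invisible to a set), and the $\LL$- and $\RR$-membership of $j$ all unchanged, so $j$ falls in the same one of cases I, II, III with the same value; the mirror argument handles $j\ge i+2$. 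The crux is $j=i$: here I invoke Lemma~\ref{stat-swap} (the bad pair flips type under the swap) and then trace the definition of $\varphi$ through cases~II and~III against the defining inequalities of the two bad-pair types. For a type~1 pair, $i$ is case~II in $x$ so $\varphi(x)_i=x_i-1$, whereas in $y$ position $i$ is case~III and the condition $\min\{a\in\suf_{i+1}(x):a>x_{i+1}\}>x_i$ forces $\min\{a\in\suf_i(y):a>y_i\}=x_i$, giving $\varphi(y)_i=x_i-1$ as well; type~2 is symmetric. Granting this, round $i$ does not disturb any ascent of $\varphi$ at a position $\le i-1$, so the induction hypothesis yields $\Asc(w)\cap[i-1]=\Asc(\varphi(w^{(i)}))\cap[i-1]$, the ``$[i-1]$-part'' of the target.

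The second fact is that a swap freezes the word-ascent one step to its right: if $\swap_j$ is applied to a bad pair at position $j$ of some $u\in\RG(n)$, then $j+1\in\Asc(u)\iff j+1\in\Asc(\swap_j(u))$, equivalently $u_{j+1}<u_{j+2}\iff u_j<u_{j+2}$ (note $u_{j+2}$ is fixed by the swap). This is a short comparison from the bad-pair inequalities: for type~1 one uses $u_j>u_{j+1}$ together with $\min\{a\in\suf_{j+1}(u):a>u_{j+1}\}>u_j$, and for type~2 one uses $u_j<u_{j+1}$, the fact that $j+1\in\RR(u)$ (so $u_{j+1}\ne u_{j+2}$), and $\min\{a\in\suf_j(u):a>u_j\}=u_{j+1}$. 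I then apply this with $j=i-1$ to derive the auxiliary equivalence $i\in\Asc(w)\iff i\in\Asc(w^{(i-1)})$: positions $i,i+1$ are untouched by rounds $1,\dots,i-2$, and round $i-1$ leaves position $i+1$ fixed, so if round $i-1$ made no swap then $w^{(i-1)}$ agrees with $w$ at $i$ and $i+1$, while if it swapped a bad pair at $i-1$ the second fact (applied to $u=w^{(i-2)}$) transports the ascent at $i$ from $w^{(i-2)}$, hence from $w$, to $w^{(i-1)}$.

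It remains to match the two sides at position $i$, which I do by splitting on round $i$. If round $i$ makes no swap, then $y=x$ and position $i$ is not a bad pair, so Lemma~\ref{notkeepasc} gives $i\in\Asc(x)\iff i\in\Asc(\varphi(x))$; since $\varphi(y)=\varphi(x)$, the auxiliary equivalence turns this into $i\in\Asc(w)\iff i\in\Asc(\varphi(w^{(i)}))$. If round $i$ swaps a type~1 bad pair, then in $y$ that pair is type~2, so Lemma~\ref{notkeepasc} forces $i\notin\Asc(\varphi(y))$, while the type~1 condition gives $i\notin\Asc(x)$ and the auxiliary equivalence then gives $i\notin\Asc(w)$; the type~2 case is the mirror image. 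In every case the membership of $i$ agrees on the two sides, which together with the $[i-1]$-part closes the induction. I expect the main obstacle to be the $j=i$ computation in the first fact: checking that $\varphi$ is literally unchanged at the left endpoint of a swapped bad pair requires pushing the case~II/III definition of $\varphi$ against the precise $\min$-inequalities defining the two types, and it is exactly here that the careful design of the bad pairs (rather than arbitrary descents) is used.
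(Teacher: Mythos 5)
Your proof is correct, and its skeleton is the paper's: induction over the rounds of the sweep algorithm, with Lemma~\ref{notkeepasc} applied positionwise and Lemma~\ref{stat-swap} supplying the flip of bad-pair type. The crux computation is also shared: your check that $\varphi(x)_i=\varphi(y)_i=x_i-1$ at the left endpoint of a swapped type-1 pair (case~II in $x$ versus case~III in $y$, where the condition $\min\{a\in\suf_{i+1}(x):a>x_{i+1}\}>x_i$ pins the minimum over $\suf_i(y)$ down to exactly $x_i$) is, up to an index shift, the paper's observation that $u_{j+1}=w^{(j)}_{j+1}-1=v_{j+1}$; your ``first fact'' merely strengthens to all positions other than $i+1$ what the paper asserts with a ``clearly'' for the prefix, and I verified it holds (letters, prefix multisets, and suffix sets at positions off the swap are untouched, so case membership and $\varphi$-values persist). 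Where you genuinely diverge is in deciding whether position $i$ lies in $\Asc(w)$. The paper tracks the letters $w_{j+1},w_{j+2}$ through all earlier rounds and runs a two-case analysis on the previous round, arguing that if that round swapped then its pair must be of type~1, and the interplay of the two min-conditions forces $w_{j+1}\ge w_{j+2}$. You instead isolate a self-contained ``frozen ascent'' lemma --- a bad-pair swap at position $j$ preserves the ascent status at $j+1$, i.e.\ $u_{j+1}<u_{j+2}\Leftrightarrow u_j<u_{j+2}$ --- and apply it once, at $j=i-1$, to get $i\in\Asc(w)\Leftrightarrow i\in\Asc(w^{(i-1)})$ uniformly in both types; the type of the round-$i$ pair then settles membership on both sides simultaneously. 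I checked both directions of that lemma: type~1 needs the min-condition for the forward implication, and type~2 needs that $j+1$ is case~II, hence $j+1\in\RR(u)$, to exclude $u_{j+1}=u_{j+2}$, which you correctly invoke. Your route buys modularity --- the paper's two-round interplay is replaced by a statement about a single swap, it covers both types at once, and it dispenses with the paper's base-case observation that $(w_1,w_2)$ is never bad --- at the mild cost of establishing the auxiliary equivalence between $\Asc(w)$ and $\Asc(w^{(i-1)})$ at position $i$ as a separate step.
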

\begin{proof}
We use induction on $i$. For $i=1$, note that $(w_1,w_2)$ cannot be a bad pair since 1 is always case I, then $w^{(1)}=w$ and the identity follows from Lemma~\ref{notkeepasc}. Now suppose the identity has been verified for $i=1,2,\ldots,j$. The case of $(w^{(j)}_{j+1},w^{(j)}_{j+2})$ not being a bad pair is again taken care of by Lemma~\ref{notkeepasc}. So we assume that $(w^{(j)}_{j+1},w^{(j)}_{j+2})$ is a bad pair, say of type 1 (the proof of type 2 is quite similar), in $w^{(j)}$, then $w^{(j)}_{j+1}>w^{(j)}_{j+2}$, $j+1\notin\Asc(w^{(j)})$. Clearly
\begin{align}\label{j-1}
\Asc(w)\cap[j-1]=\Asc(\varphi(w^{(j)}))\cap[j-1]=\Asc(\varphi(w^{(j+1)}))\cap[j-1].
\end{align}
For each word involved, we now need to take a closer look at the $j$-th, $j+1$-th and $j+2$-th letters. For brevity, let $u=\varphi(w^{(j)})$ and $v=\varphi(w^{(j+1)})$. Note that $w^{(j)}_j=w^{(j+1)}_j$, $u_j=v_j$ and $u_{j+1}=w^{(j)}_{j+1}-1=v_{j+1}$. This means that $u_j<u_{j+1}$ if and only if $v_j<v_{j+1}$, hence together with \eqref{j-1} we have
\begin{align}\label{j}
\Asc(w)\cap[j]=\Asc(\varphi(w^{(j)}))\cap[j]=\Asc(\varphi(w^{(j+1)}))\cap[j].
\end{align}
Finally, since $(w^{(j)}_{j+1},w^{(j)}_{j+2})$ is a bad pair of type 1, we see $j+1\notin\Asc(w^{(j)})$ and $j+1\in\Asc(u)$. By Lemma~\ref{stat-swap}, $(w^{(j+1)}_{j+1},w^{(j+1)}_{j+2})$ is a bad pair of type 2, hence $j+1\in\Asc(w^{(j+1)})$ and $j+1\notin\Asc(v)$. Moreover, we have
\begin{align}\label{j+1}
w_{j+2}=w^{(1)}_{j+2}=\cdots=w^{(j)}_{j+2},\;w_{j+1}=w^{(1)}_{j+1}=\cdots=w^{(j-1)}_{j+1}.
\end{align}
If $(w^{(j-1)}_j,w^{(j-1)}_{j+1})$ is not a bad pair, then $w^{(j-1)}=w^{(j)}$, in particular $w^{(j-1)}_{j+1}=w^{(j)}_{j+1}$. Together with \eqref{j+1} we have $j+1\notin \Asc(w)$. If $(w^{(j-1)}_j,w^{(j-1)}_{j+1})$ is a bad pair, then it must be of type 1, since $j+1$ is case II in $w^{(j)}$ and $w^{(j-1)}_j=w^{(j)}_{j+1}$. The condition of type 1 bad pair forces $w_{j+1}=w^{(j-1)}_{j+1}\ge w^{(j-1)}_{j+2}=w_{j+2}$, so once again we have $j+1\notin\Asc(w)$. In summary, we have $j+1\notin\Asc(w)$ and $j+1\notin\Asc(v)$. Combining this with \eqref{j} we arrive at
$$\Asc(w)\cap[j+1]=\Asc(\varphi(w^{(j+1)}))\cap[j+1].$$
The proof is now completed by induction.
\end{proof}

We find ourselves in the position to prove Theorem \ref{strong-main-bijection-r}, which implies Theorem \ref{main-bijection-r}.
\begin{proof}[Proof of Theorem~\ref{strong-main-bijection-r}]
Let $\zeta=\varphi  \circ  \gamma$. Notice that both $\varphi$ and $\gamma$ are bijections on $\RG(n)$. Thus, $\zeta$ is a bijection on
$\RG(n)$.

By Lemmas~\ref{vrs-rs-pair} and \ref{stat-swap}, we see that $\gamma$ preserves $(\LL,\bwrs)$, and $\varphi$ maps $(\LL, \bwrs)$ to $(\LL, \brs)$. It remains to show that $\Asc(w)=\Asc(\zeta(w))$. Since $\Asc(w)\subseteq [n-1]$ for any word $w$ of length $n$, we see $\Asc(w)=\Asc(\zeta(w))$ is simply the $i=n-1$ case of Lemma~\ref{lemma:Asc}.

\end{proof}
\section{\texorpdfstring{A refined relation between $\BAST$ and $\STAT$}{}}\label{sec:stat-bast}
Among the new Mahonian statistics introduced by Babson and Steingr\'imsson \cite{BS}, the following one
\begin{align}\label{def:stat}
\STAT &=\underline{21}+\underline{13}2+\underline{21}3+\underline{32}1
\end{align}
has attracted much attention (see for example \cite{FZ,Bur,KV,CL,FHV}). The equidistribution results we have seen so far in this paper are between $\BAST=\STAT^{\rc}$ and $\MAJ$ (see Corollaries~\ref{main-Sn} and \ref{main-bijectionper}), over $\SS_n(1\underline{32})$. The natural quest to establish comparable results directly between $\STAT$ and $\MAJ$ led us to explore further the relation between $\STAT$ and $\BAST$, and find the following theorem, which does not follow from applying reverse complement. Given a permutation $\pi=\pi_1\pi_2\cdots\pi_n$, we define two statistics, the first letter $\F(\pi):=\pi_1$, and the ending letter $\L(\pi):=\pi_n$.
\begin{theorem}\label{thm:stat-bast}
Statistics $(\F, \L, \des, \Id, \STAT, \BAST)$ and
$(\F, \L, \des, \Id, \BAST, \STAT)$ have the same joint distribution on $\SS_n$ for all $n \geq 1$.
\end{theorem}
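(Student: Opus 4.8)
The plan is to construct an explicit involution $\psi$ on $\SS_n$ that fixes each of $\F,\L,\des,\Id$ and interchanges $\STAT$ with $\BAST$. Once such a $\psi$ is in hand the theorem is immediate: $\psi$ maps the fiber of $(\F,\L,\des,\Id)$ over any prescribed value $(f,l,d,S)$ to itself, and within this fiber it sends a permutation with $(\STAT,\BAST)=(s,b)$ to one with $(\STAT,\BAST)=(b,s)$, so pushing the distribution of $(\F,\L,\des,\Id,\STAT,\BAST)$ forward along $\psi$ produces exactly that of $(\F,\L,\des,\Id,\BAST,\STAT)$. Moreover I only need to verify the single identity $\STAT(\psi(\pi))=\BAST(\pi)$: since $\psi$ is an involution, applying this identity to $\psi(\pi)$ in place of $\pi$ yields $\STAT(\pi)=\BAST(\psi(\pi))$ for free, so both halves of the exchange follow from one check.

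To discover $\psi$, I would first record the per-pair contributions of the two statistics. Writing $\STAT=\underline{21}+\underline{13}2+\underline{21}3+\underline{32}1$ as in \eqref{def:stat} and $\BAST=\underline{21}+2\underline{13}+1\underline{32}+3\underline{21}$, one sees that each adjacent pair $(\pi_i,\pi_{i+1})$ contributes to $\STAT$ a count of suitable entries lying to the right of position $i+1$, and to $\BAST$ the analogous count of entries lying to the left of position $i$; for instance at an ascending pair $\pi_i<\pi_{i+1}$ the two contributions differ by $\#\{j>i+1:\pi_i<\pi_j<\pi_{i+1}\}-\#\{h<i:\pi_i<\pi_h<\pi_{i+1}\}$, entirely in the spirit of the local formula $2\underline{31}(p)-2\underline{13}(p)=n-p_n-\des(p)$ of \eqref{231-213}. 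Thus $\STAT$ ``looks right'' of each vincular pair while $\BAST$ ``looks left''. The map $\rc$ trades these two viewpoints and so swaps $\STAT$ with $\BAST$, but it sends $\F\mapsto n+1-\L$, $\L\mapsto n+1-\F$ and scrambles $\Id$; the task is therefore to engineer a reversal-type move achieving the same left/right exchange while leaving the endpoints and the inverse descent set untouched. The decisive structural constraint is that $\Id=\Des(\pi^{-1})$ records, for every $i$, whether $i+1$ precedes $i$, equivalently it fixes the maximal runs of consecutive values that occur in increasing positional order. I would design $\psi$ to rearrange positions compatibly with these runs, converting right-counts into left-counts pair by pair without altering the relative order of any two consecutive values nor the entries sitting in positions $1$ and $n$.

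After proposing the rule for $\psi$, the verification splits into three parts: that $\psi$ is a well-defined involution with $\psi\circ\psi=\mathrm{id}$; that it preserves $\F,\L,\des$ and the set-valued statistic $\Id$; and that $\STAT\circ\psi=\BAST$. The first two should reduce to checking that the defining move never touches $\pi_1$ or $\pi_n$, never changes the number of descents, and never changes the relative order of two consecutive values (which is precisely what controls $\Id$). The third is where the real labor lies: one must match the right-looking contributions of $\STAT(\psi(\pi))$ against the left-looking contributions of $\BAST(\pi)$, pair by pair, invoking the local difference identity above together with the constraints forced by the fixed inverse descent set.

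I expect the main obstacle to be the \emph{simultaneous} preservation of $\Id$ and of the endpoints $\F,\L$ while still forcing the STAT/BAST exchange. The elementary moves that are natural for trading a right-count for a left-count, such as adjacent transpositions or the reversal of a run, generically disturb $\Des(\pi^{-1})$, whereas moves phrased directly on values (which are the natural way to protect $\Id$) move entries around and so endanger $\F$ and $\L$. Threading this needle with a single move, and then proving that the resulting map is genuinely an involution rather than merely a bijection, is the crux; the per-pair identity descending from \eqref{231-213} and \eqref{def:stat} will be the principal tool for the closing bookkeeping.
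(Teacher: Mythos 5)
Your reduction is sound as far as it goes: the involutivity trick (only $\STAT(\psi(\pi))=\BAST(\pi)$ needs checking) is exactly what the paper uses, and your reading of $\STAT$ as right-looking and $\BAST$ as left-looking pair contributions, swapped by $\rc$, is accurate. But the proposal contains no construction: $\psi$ is never defined, and everything after the heuristics is deferred to ``after proposing the rule.'' You correctly identify the obstruction --- elementary position moves wreck $\Id(\pi)=\Des(\pi^{-1})$, while value moves endanger $\F$ and $\L$ --- but naming the obstruction is not overcoming it, and the closing sentence concedes that the crux is unresolved. As written, this is a research plan, not a proof.

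The idea you are missing is that the paper does \emph{not} build the move from scratch; it bootstraps from the already-known involution $\phi$ of Theorem~\ref{thm:FHV}, which swaps $\MAJ$ and $\STAT$ while preserving $(\F,\des,\Id)$ on every rearrangement class, and converts $\BAST$ into $\MAJ$ via \eqref{231-213}, $\MAJ(p)-\BAST(p)=n-p_n-\des(p)$. Concretely, for $\pi_1>\pi_n$ one sets $\psi(\pi)=\gamma^-\bigl(\phi(\pre_{n-1}(\gamma^+(\pi)))\cdot\pi_n\bigr)$, where $\gamma^+$ adds $n$ to every letter smaller than $\pi_n$. This normalization makes $\pi_n$ the minimum letter, so that $\phi$ acts only on the prefix, appending $\pi_n$ creates exactly one controlled descent, and $\des$, $\STAT$ and $\Id$ behave predictably under $\gamma^{\pm}$ (Lemmas~\ref{psi:des}, \ref{gamma:stat}, \ref{psi:Id}); the case $\pi_1<\pi_n$ is then dispatched by conjugating with reversal using $\BAST(\pi)+\STAT(r(\pi))=\binom{n}{2}$. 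Note also that even the involutivity you rely on is not free: in the paper it rests on the closure claim $\phi(\pre_{n-1}(q))\cdot q_n\in\gamma^+(\SS_n)$, which is what lets $\gamma^-\circ\gamma^+$ cancel. Without some analogue of this two-step reduction --- last-letter normalization, then an off-the-shelf $\MAJ\leftrightarrow\STAT$ involution --- your single-move design has no evident way through the needle you describe, so the gap is genuine.
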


To prove Theorem~\ref{thm:stat-bast}, we construct an involution $\psi$, which builds on an involution $\phi$ introduced in \cite{FHV} to give a proof of the following theorem. Let $R(w)$ denote the rearrangement class of a given word $w$. For instance, if $w=1314$, then $$R(w)=\{1134, 1143, 1314, 1341, 1413, 1431, 3114, 3141, 3411, 4113, 4131, 4311\}.$$
\begin{theorem}[Theorem 1.3 in \cite{FHV}]
\label{thm:FHV}
There exists an involution $\phi=\phi_{R(w)}$ for each rearrangement class $R(w)$, such that $$(\F, \des, \Id, \MAJ, \STAT)v=(\F, \des, \Id, \STAT, \MAJ)\phi(v),$$ for any $v\in R(w)$.
\end{theorem}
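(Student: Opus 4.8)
The plan is to construct $\phi$ recursively, building each word of $R(w)$ one letter at a time and tracking how $\MAJ$ and $\STAT$ respond to a single insertion. The starting observation is that both statistics share the adjacent-descent term $\underline{21}$ (which equals $\des$), so that
$$\MAJ-\STAT=(1\underline{32}+2\underline{31}+3\underline{21})-(\underline{13}2+\underline{21}3+\underline{32}1)$$
is exactly the quantity that $\phi$ must negate while leaving $\des$ and $\Id$ intact. Since $\MAJ$ is determined by the descent set $\Des$ whereas $\STAT$ is not, the involution should preserve the inverse descent set $\Id$ and the cardinality $\des=|\Des|$ but redistribute $\Des$ itself; it is therefore natural to organize the recursion so that $\Id$ is manifestly frozen while the descent set is free to move.

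First I would record the classical maj-insertion lemma: removing the largest letter $M$ from a word and reinserting it into the $n$ available gaps produces maj-values forming a full interval $\{m,m+1,\dots,m+n-1\}$, one per gap. I would then prove the analogous statement for $\STAT$, namely that the $n$ reinsertions of $M$ sweep out an interval $\{s,\dots,s+n-1\}$ as well, and, crucially, that the gap realizing maj-increment $j$ is the \emph{order-reverse} of the gap realizing STAT-increment $j$. With $\mu$ and $\sigma$ denoting the two increment functions on gaps, the map $g\mapsto g'=\mu^{-1}(\sigma(g))$ then swaps the two increments, and it is an involution precisely because $\sigma\circ\mu^{-1}$ is the reversal. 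The involution $\phi$ is defined by: strip $M$, recursively apply $\phi$ to the shorter word (swapping its $\MAJ$ and $\STAT$ by induction), and reinsert $M$ into the dual gap $g'$.

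The main obstacle is reconciling this increment-matching with the simultaneous preservation of $\des$ and $\Id$: inserting the largest \emph{letter} changes $\des$ by $0$ or $1$ depending on the gap and generically alters $\Id$, so a blunt largest-letter recursion leaves the correct $(\F,\des,\Id)$-fiber. I would resolve this by recursing instead on the largest \emph{value} and passing to the inverse descent structure (via standardization), where $\Id(v)=\Des(v^{-1})$ becomes a genuine descent set that the insertion of a new top value extends in a controlled, predetermined way; in this coordinate system, freezing $\Id$ amounts to fixing where the new value is appended, and the residual freedom is exactly the redistribution of $\Des(v)$ carrying the $\MAJ$/$\STAT$ swap. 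Establishing the precise dual-increment lemma for $\STAT$ in this inverse picture, and verifying that the gap-duality is an order-reversing involution so that $\phi\circ\phi=\mathrm{id}$, is where the real work lies; preservation of $\F$ is automatic once the recursion never disturbs the first letter. As a consistency check and a fallback existence argument, I would separately confirm that the generating function $\sum_{v\in R(w)}q^{\MAJ(v)}t^{\STAT(v)}$, refined by $\F$, $\des$ and $\Id$, is symmetric in $q$ and $t$, since such symmetry already forces some involution of the required type to exist.
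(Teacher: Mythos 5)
The paper never proves this statement: it is imported wholesale as Theorem~1.3 of the cited work of Fu, Hua and Vajnovszki, and is used here as a black box (inside the construction of $\psi$ in Section~6 and of $\eta$ in Theorem~\ref{main-URG-st}). So there is no internal proof to compare yours against; your sketch must stand on its own, and it does not.

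The central claim of your recursion is false. Take $v=12$ and insert the new maximum $3$: the three gaps (front, middle, final) give $\MAJ$-increments $(1,2,0)$ and $\STAT$-increments $(1,2,0)$, so the two increment functions $\mu$ and $\sigma$ coincide and $\sigma\circ\mu^{-1}$ is the identity, not the order reversal; for $v=21$ one finds $\mu=(2,1,0)$ and $\sigma=(2,0,1)$, again not the reversal. Hence the asserted gap-duality has no basis, and the involutivity of $g\mapsto\mu^{-1}(\sigma(g))$ is unproven. Worse, matching equal increments across all gaps is structurally incompatible with preserving $\des$: inserting a new maximum leaves $\des$ unchanged exactly at the final gap and at descent gaps, and raises it by one elsewhere, while at the final gap the $\MAJ$-increment is $0$ but the $\STAT$-increment is $\des(v)$, because every descent $v_i>v_{i+1}$ becomes an occurrence of $\underline{21}3$ with the appended maximum. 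So whenever $\des(v)>0$, your map sends the final gap to a $\des$-increasing gap. Any viable insertion lemma must match increments separately within the $\des$-preserving and $\des$-increasing families of gaps (in the small examples above the increment multisets do agree classwise, which hints at the correct statement), and must in addition fix the front gap (to preserve $\F$) and control $\Id$ --- none of which your sketch establishes.

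Your proposed repair --- recursing on the largest value ``in the inverse picture via standardization'' --- is a restatement of the difficulty rather than a solution: the theorem concerns words with repeated letters, where inverses do not exist, $\Id$ must itself be defined through standardization, and inserting a letter tied with the current maximum produces coincident words with non-distinct increments (inserting a second $2$ into $12$ yields $122$ from two different gaps), so even the interval lemma you start from needs reformulation before the recursion can begin. Finally, the fallback argument is circular: the symmetry in $q$ and $t$ of the generating function of $(\MAJ,\STAT)$ refined by $(\F,\des,\Id)$ is not a consistency check but is precisely equivalent to the existence of the desired involution --- it is the theorem itself, restated.
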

The following operators $\gamma^{+}$ and $\gamma^{-}$ are crucial in our construction of $\psi$.
\begin{Def}
Given a word $w=w_1w_2\cdots w_n$ of length $n$, we define
\begin{align*}
\gamma^{+}(w) &=u_1u_2\cdots u_n, \text{ where } u_j=\begin{cases}
w_j+n & \text{if }w_j<w_n,\\
w_j & \text{otherwise}.
\end{cases}\\
\gamma^{-}(w) &=u_1u_2\cdots u_n, \text{ where } u_j=\begin{cases}
w_j-n & \text{if }w_j>n,\\
w_j & \text{otherwise}.
\end{cases}
\end{align*}
\end{Def}
Clearly we have $\gamma^-\circ\gamma^+(\pi)=\pi$, for any $\pi\in\SS_n$. We also need \emph{the $i$-th prefix of $w=w_1w_2\cdots w_n$}, denoted as $\pre_i(w):=w_1w_2\cdots w_i$, for $1\le i\le n$. Now we can define $\psi$.
\begin{Def}
For the single permutation of length $1$, we let $\psi(1)=1$. For $n\ge 2$, given a permutation $\pi=\pi_1\pi_2\cdots\pi_n\in\SS_n$, if $\pi_1>\pi_n$, then we let
$$\psi(\pi)=\gamma^-(\phi(\pre_{n-1}(\gamma^+(\pi)))\cdot \pi_n),$$
where $w\cdot u$ denotes the concatenation of two words $w$ and $u$. If we have $\pi_1<\pi_n$, then we let
$$\psi(\pi)=r(\psi(r(\pi))),$$
where $r$ is the reversal.
\end{Def}
The following observation on $\psi$ should be clear from its definition and the properties of $\phi$ revealed by Theorem~\ref{thm:FHV} above. Namely, $\phi$ is an involution on each rearrangement class of words, which preserves the first letter statistic $\F$.

\begin{lemma}\label{psi:F-L}
For $n\ge 1$, the map $\psi$ is an involution on $\SS_n$, such that for any $\pi\in\SS_n$,
$$\F(\pi)=\F(\psi(\pi)),\; \L(\pi)=\L(\psi(\pi)).$$
\end{lemma}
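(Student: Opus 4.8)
The plan is to prove Lemma~\ref{psi:F-L} by induction on $n$, following the two-case recursive structure of the definition of $\psi$. The base case $n=1$ is immediate since $\psi(1)=1$. For the inductive step, I would first dispense with the case $\pi_1<\pi_n$, where $\psi(\pi)=r(\psi(r(\pi)))$. Here the key observation is that reversal swaps the roles of $\F$ and $\L$: we have $\F(r(\sigma))=\L(\sigma)$ and $\L(r(\sigma))=\F(\sigma)$ for any permutation $\sigma$. Assuming the lemma holds for $\pi'=r(\pi)$ (which satisfies $\pi'_1>\pi'_n$ exactly when $\pi_1<\pi_n$, placing it in the first case), the desired identities for the first and last letters, as well as the involution property, will follow by chasing reversals through the formula $\psi(\pi)=r(\psi(r(\pi)))$.

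The substantive case is $\pi_1>\pi_n$, where $\psi(\pi)=\gamma^-(\phi(\pre_{n-1}(\gamma^+(\pi)))\cdot\pi_n)$. First I would verify the preservation of $\L$. The last letter of the concatenation $\phi(\pre_{n-1}(\gamma^+(\pi)))\cdot\pi_n$ is literally $\pi_n$, and since $\gamma^-$ acts letterwise without changing $\pi_n$ (as $\pi_n\le n$), we get $\L(\psi(\pi))=\pi_n=\L(\pi)$. For preservation of $\F$, the crucial input is that $\phi$ preserves the first-letter statistic $\F$ on each rearrangement class, as recorded after Theorem~\ref{thm:FHV}. Thus the first letter of $\phi(\pre_{n-1}(\gamma^+(\pi)))$ equals the first letter of $\pre_{n-1}(\gamma^+(\pi))$, which is $(\gamma^+(\pi))_1$. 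I would then track how $\gamma^+$ modifies $\pi_1$ and how $\gamma^-$ subsequently restores it, using that $\gamma^-\circ\gamma^+=\mathrm{id}$ on $\SS_n$, to conclude $\F(\psi(\pi))=\pi_1=\F(\pi)$.

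The more delicate part is showing $\psi$ is an involution in the case $\pi_1>\pi_n$. I would compute $\psi(\psi(\pi))$ and check it returns $\pi$. The main point to verify is that $\psi(\pi)$ again satisfies the first-case hypothesis, i.e.\ that its first letter exceeds its last letter, so that applying $\psi$ a second time uses the same branch of the definition. Since $\F$ and $\L$ are both preserved by the first application, $\psi(\pi)$ has first letter $\pi_1$ and last letter $\pi_n$ with $\pi_1>\pi_n$, so indeed we stay in the first case. Then the involutivity reduces to the interplay of three facts: that $\phi$ is an involution on the relevant rearrangement class, that $\gamma^-\circ\gamma^+=\mathrm{id}$, and that the prefix/concatenation bookkeeping is consistent. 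The main obstacle I anticipate is precisely this bookkeeping: one must confirm that $\gamma^+$ applied to $\psi(\pi)$ reproduces (on its length-$(n-1)$ prefix) exactly the word to which $\phi$ was applied in the first step, so that the second application of $\phi$ inverts the first. This requires carefully checking that the rearrangement class is preserved and that the letters shifted by $\gamma^+$ and $\gamma^-$ match up under the threshold comparisons against $\pi_n$ and $n$; I expect this to be a routine but attention-demanding verification rather than a conceptual difficulty.
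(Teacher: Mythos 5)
Your proposal is correct and takes essentially the same approach as the paper, which records Lemma~\ref{psi:F-L} with only a one-line justification from exactly the ingredients you assemble: $\phi$ is an involution on each rearrangement class preserving $\F$, $\gamma^-\circ\gamma^+=\mathrm{id}$ on $\SS_n$, and the bookkeeping claim you flag (that $\phi(\pre_{n-1}(q))\cdot q_n$ lies in $\gamma^+(\SS_n)$ whenever $q\in\gamma^+(\SS_n)$, stated in the paper within the proof of Lemma~\ref{psi:des}). One cosmetic remark: no induction on $n$ is actually needed, since the case $\pi_1<\pi_n$ reduces via reversal to the case $\pi_1>\pi_n$ at the same length $n$.
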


To show the equidistribution of the remaining statistics, we need three more lemmas.

\begin{lemma}\label{psi:des}
For any $\pi\in\SS_n$ with $\pi_1>\pi_n$, we have
\begin{align}\label{gamma:des}
\des(\pi)=\des(\gamma^{+}(\pi)), \text{ and}
\end{align}
\begin{align}\label{gamma:MAJ}
\MAJ(\gamma^+(\pi))=\MAJ(\pi)+\pi_n-1.
\end{align}
Consequently, for any $\pi\in\SS_n$, we have $\des(\pi)=\des(\psi(\pi))$.
\end{lemma}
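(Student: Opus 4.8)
The plan is to prove the two displayed identities \eqref{gamma:des} and \eqref{gamma:MAJ} by a direct analysis of how $\gamma^+$ acts on adjacent pairs, and then to deduce the descent-invariance of $\psi$ from \eqref{gamma:des} together with the properties of $\phi$.

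Fix $\pi$ with $\pi_1>\pi_n$ and set $h_j=\chi(\pi_j\ge \pi_n)$, calling $\pi_j$ \emph{high} when $h_j=1$ and \emph{low} otherwise. Since $\gamma^+$ adds the constant $n$ to exactly the low letters, the descent status of a pair $(\pi_i,\pi_{i+1})$ is unchanged whenever $h_i=h_{i+1}$, while it flips at each threshold crossing: a low-to-high step ($h_i=0$, $h_{i+1}=1$) turns an ascent into a descent and a high-to-low step does the reverse. Because $\pi_1$ is high (as $\pi_1>\pi_n$) and $\pi_n$ is high, the binary word $h_1\cdots h_n$ starts and ends with $1$, so it has equally many low-to-high and high-to-low crossings; this gives \eqref{gamma:des}. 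For \eqref{gamma:MAJ} I would note that the net change in $\MAJ$ is exactly $\sum_{i=1}^{n-1} i\,(h_{i+1}-h_i)$, since the descent set changes only at the crossings and each contributes its position with the sign of $h_{i+1}-h_i$. An Abel summation rewrites this as $n h_n-h_1-\sum_{j=2}^{n}h_j$; substituting $h_1=h_n=1$ and $\sum_{j=2}^n h_j=(n-\pi_n+1)-1$ (the number of high letters minus one) collapses it to $\pi_n-1$, which is \eqref{gamma:MAJ}.

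For the final assertion I first handle $\pi$ with $\pi_1>\pi_n$. Write $\sigma=\gamma^+(\pi)$, so that $\sigma_n=\pi_n$ and $\des(\sigma)=\des(\pi)$ by \eqref{gamma:des}. Put $\tau=\pre_{n-1}(\sigma)$; every letter of $\tau$ is strictly larger than $\pi_n$, so appending $\pi_n$ forces a final descent and $\des(\sigma)=\des(\tau)+1$. By Theorem~\ref{thm:FHV}, $\phi$ preserves $\des$ and sends $\tau$ to a rearrangement of itself, so the word $\rho:=\phi(\tau)\cdot \pi_n$ likewise satisfies $\des(\rho)=\des(\phi(\tau))+1=\des(\tau)+1=\des(\pi)$. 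Finally $\psi(\pi)=\gamma^-(\rho)$, and one checks directly that $\gamma^+(\psi(\pi))=\rho$: the letters of $\rho$ exceeding $n$ are precisely the $\gamma^+$-images of the low letters of $\pi$, so $\gamma^-$ returns them to $[1,\pi_n-1]$ and a re-application of $\gamma^+$ (whose threshold is the unchanged last letter $\pi_n$) restores $\rho$. Since $\psi(\pi)_1=\pi_1>\pi_n=\psi(\pi)_n$ by Lemma~\ref{psi:F-L}, identity \eqref{gamma:des} applies to $\psi(\pi)$ and gives $\des(\rho)=\des(\psi(\pi))$; hence $\des(\psi(\pi))=\des(\pi)$. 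The remaining case $\pi_1<\pi_n$ follows from $\psi(\pi)=r(\psi(r(\pi)))$: here $r(\pi)_1>r(\pi)_n$, so the case just treated yields $\des(\psi(r(\pi)))=\des(r(\pi))$, and the elementary identity $\des(w)+\des(r(w))=n-1$ for permutations converts this into $\des(\psi(\pi))=\des(\pi)$.

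I expect the bookkeeping around $\gamma^-$ to be the only delicate point: one must verify precisely that the letters of $\rho$ greater than $n$ are exactly the boosted low letters of $\pi$, so that $\gamma^-(\rho)$ is a genuine permutation of $[n]$ and the clean relation $\gamma^+(\psi(\pi))=\rho$ holds on the nose, letting me recycle \eqref{gamma:des} rather than reprove descent-invariance for $\gamma^-$. By contrast, the crossing argument for \eqref{gamma:des}, the Abel summation for \eqref{gamma:MAJ}, and the reversal case are all routine.
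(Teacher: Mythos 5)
Your proof is correct and follows essentially the same route as the paper: your indicator sequence $h_j=\chi(\pi_j\ge\pi_n)$ with its threshold-crossing count and Abel summation is just a reformulation of the paper's decomposition of $\pre_{n-1}(\pi)$ into alternating large and small blocks (crossings correspond to block junctions, and the total MAJ shift $\pi_n-1$ is the number of low letters either way), and your deduction of $\des(\psi(\pi))=\des(\pi)$ via $\gamma^+(\psi(\pi))=\rho$, Theorem~\ref{thm:FHV}, the forced final descent after appending the minimal letter $\pi_n$, and the reversal identity $\des(w)+\des(r(w))=n-1$ mirrors the paper's chain of equalities exactly. The only place you go beyond the paper is in explicitly verifying that the letters of $\rho$ exceeding $n$ are precisely $\{n+1,\dots,n+\pi_n-1\}$, so that $\gamma^+(\gamma^-(\rho))=\rho$; the paper states the equivalent fact (that $\phi(\pre_{n-1}(q))\cdot q_n\in\gamma^+(\SS_n)$ for $q\in\gamma^+(\SS_n)$) as a claim that is ``crucial and not hard to check,'' so this is a welcome filling-in rather than a different argument.
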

\begin{proof}
First we assume that $\pi_1>\pi_n$. To show that $\des(\pi)=\des(\gamma^{+}(\pi))$, we view $\pre_{n-1}(\pi)$ as the concatenation of alternating large blocks $B$'s and small blocks $b$'s, such that letters in $B$'s are all larger than $\pi_n$, while letters in $b$'s are all smaller than $\pi_n$. Since $\pi_1>\pi_n$, the first block must be a large block, we then have the following two cases.
\begin{itemize}
	\item[I.] $\pre_{n-1}(\pi)=B_1b_1B_2b_2\cdots B_kb_k$. Then $\gamma^+(\pi)=B_1\tilde{b}_1B_2\tilde{b}_2\cdots B_k\tilde{b}_k\pi_n$, where each letter in $\tilde{b}$'s has been increased by $n$.
	\item[II.] $\pre_{n-1}(\pi)=B_1b_1B_2b_2\cdots B_k$. Then $\gamma^+(\pi)=B_1\tilde{b}_1B_2\tilde{b}_2\cdots B_k\pi_n$.
\end{itemize}
The descents in $\pi$ come in two types, namely those with both letters coming from the same block, called \emph{in-block descents}, and the remaining ones called \emph{inter-block descents}. In both cases I and II, the number and the positions of the in-block descents are preserved under the operation $\gamma^{+}$. For inter-block descents, suppose $\pi$ is in case I. Then the ending letter of $B_1$ and the starting letter of $b_1$ form a descent of $\pi$. The same holds true for $B_2$ and $b_2,\cdots$, for $B_k$ and $b_k$. The total contribution to $\des(\pi)$ is $k$. On the other hand, we see the ending letter of $\tilde{b}_1$ and the starting letter of $B_2$ form a descent of $\gamma^{+}(\pi)$. The same holds true for $\tilde{b}_2$ and $B_3,\ldots$, for $\tilde{b}_k$ and $\pi_n$. Therefore the total contribution to $\des(\gamma^+(\pi))$ is also $k$, but to $\MAJ(\gamma^+(\pi))$ is increased by $|\tilde{b_1}|+|\tilde{b_2}|+\cdots+|\tilde{b_k}|=\pi_n-1$. A similar analysis applies for case II. So we see indeed $\des(\pi)=\des(\gamma^+(\pi))$ and \eqref{gamma:MAJ} holds true.

Still assuming $\pi_1>\pi_n$, now we show that $\des(\pi)=\des(\psi(\pi))$. The following claim is crucial and not hard to check.
\begin{align*}
\text{If $q\in\gamma^+(\SS_n)$, then } \phi(\pre_{n-1}(q))\cdot q_n\in\gamma^+(\SS_n).
\end{align*}
Let $p=\gamma^+(\pi)$, note that $p_n=\pi_n$ by definition. By the above claim, there exists some $\sigma\in\SS_n$, such that $\phi(\pre_{n-1}(p))\cdot p_n=\gamma^+(\sigma)$, hence $\psi(\pi)=\gamma^-(\gamma^+(\sigma))=\sigma$. Therefore by Lemma~\ref{psi:F-L} we have $\sigma_1=\pi_1>\pi_n=\sigma_n$, now we see
\begin{align*}
\des(\psi(\pi)) &= \des(\sigma)=\des(\gamma^+(\sigma))=\des(\phi(\pre_{n-1}(p))\cdot p_n)\\
&=\des(\phi(\pre_{n-1}(p)))+1=\des(\pre_{n-1}(p))+1\\
&=\des(p)=\des(\gamma^+(\pi))=\des(\pi).
\end{align*}

Finally, for those $\pi\in\SS_n$ with $\pi_1<\pi_n$, simply note that $\des(r(\sigma))=n-1-\des(\sigma)$ for any $\sigma\in\SS_n$.
\end{proof}

\begin{lemma}\label{gamma:stat}
For any $\pi\in\SS_n$ with $\pi_1>\pi_n$, we have $\STAT(\pi)=\STAT(\gamma^+(\pi))$.
\end{lemma}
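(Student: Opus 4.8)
The plan is to decompose $\STAT$ into local contributions indexed by the adjacent pairs $(\pi_i,\pi_{i+1})$, $1\le i\le n-1$, and then to compare these contributions before and after applying $\gamma^{+}$. For a fixed $i$, each later letter $\pi_j$ with $j>i+1$ either does or does not create a $\STAT$-occurrence with the underlined pair $(\pi_i,\pi_{i+1})$: if $\pi_i>\pi_{i+1}$ it contributes one (through $\underline{21}3$ or $\underline{32}1$) unless $\pi_j$ lies strictly between $\pi_{i+1}$ and $\pi_i$, while if $\pi_i<\pi_{i+1}$ it contributes one (through $\underline{13}2$) exactly when $\pi_j$ lies strictly between $\pi_i$ and $\pi_{i+1}$. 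Adding the contribution of $\underline{21}$ at each descent, I would set
\[
c_i(\pi)=\begin{cases}
1+\#\{j>i+1:\ \pi_j>\pi_i\ \text{or}\ \pi_j<\pi_{i+1}\}, & \pi_i>\pi_{i+1},\\[3pt]
\#\{j>i+1:\ \pi_i<\pi_j<\pi_{i+1}\}, & \pi_i<\pi_{i+1},
\end{cases}
\]
so that $\STAT(\pi)=\sum_{i=1}^{n-1}c_i(\pi)$, and likewise for $\gamma^{+}(\pi)$ (whose entries are distinct, so $\STAT$ applies via order-isomorphism).

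Since $\gamma^{+}$ alters only the values and not the positions, I can compare $c_i(\pi)$ with $c_i(\gamma^{+}(\pi))$ term by term. Writing $m=\pi_n$, I would call a letter \emph{large} if its value is $\ge m$ and \emph{small} otherwise, and classify each pair $(\pi_i,\pi_{i+1})$ by the types of its two entries. For a pair whose entries are both large or both small, $\gamma^{+}$ preserves their relative order, and it also preserves the set of later letters lying strictly inside the open interval they span (a later letter from the other group lies outside this interval both before and after the shift, while a later letter from the same group keeps its position relative to the interval); hence such a $c_i$ is unchanged. The interesting cases are the \emph{mixed} pairs, where $\gamma^{+}$ flips an ascent into a descent or vice versa: a large-then-small pair (a descent of $\pi$) becomes an ascent, and a short count using the fact that $\gamma^{+}$ sends the small letters to the very top shows its contribution drops by exactly $1$; symmetrically a small-then-large pair (an ascent of $\pi$) becomes a descent and its contribution rises by exactly $1$.

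It then remains to balance these $\pm1$ changes. The net change in $\STAT$ equals the number of small-then-large adjacent pairs minus the number of large-then-small ones. Here I would invoke the same block structure used in the proof of Lemma~\ref{psi:des}: because $\pi_1>\pi_n=m$, the large/small profile of $\pi_1\cdots\pi_n$ both begins and ends with a large letter, so its large-to-small and small-to-large transitions alternate and are therefore equal in number. Hence the net change vanishes and $\STAT(\pi)=\STAT(\gamma^{+}(\pi))$.

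The main obstacle, and the only place needing real care, is the bookkeeping for the two mixed types: one must check that betweenness with respect to the \emph{flipped} pair in $\gamma^{+}(\pi)$ translates exactly into the conditions ``$\pi_j>\pi_i$'' and ``$\pi_j<\pi_{i+1}$'' on the original later letters, and that the endpoint letter $\pi_n=m$ counts as large so that the transition-balancing argument also governs the final pair $(\pi_{n-1},\pi_n)$. Once the $\pm1$ is pinned down, the equality follows from the transition-parity observation exactly as above.
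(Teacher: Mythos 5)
Your proposal is correct and takes essentially the same approach as the paper: both anchor each occurrence at the underlined adjacent pair $(\pi_i,\pi_{i+1})$, classify the remaining letters as above or below $\pi_n$, and match contributions case by case, with the mixed (large--small) pairs being the only ones where occurrences migrate between $\underline{13}2$, $\underline{21}3$ and $\underline{32}1$. The only cosmetic difference is that you fold the $\underline{21}$ term into your local counts $c_i$ and cancel the resulting $\pm 1$'s at mixed pairs by the alternation of large/small runs, whereas the paper first disposes of $\underline{21}$ by citing the already-proved identity $\des(\pi)=\des(\gamma^{+}(\pi))$ (whose block proof is exactly your alternation argument) and then verifies triple-wise preservation of the three-pattern sum.
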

\begin{proof}
In view of the definition \eqref{def:stat} and equation \eqref{gamma:des}, it suffices to show that
$$\underline{13}2(\pi)+\underline{21}3(\pi)+\underline{32}1(\pi)=\underline{13}2(\gamma^+(\pi))+\underline{21}3(\gamma^+(\pi))+\underline{32}1(\gamma^+(\pi)).$$

Let $p=p_1p_2\cdots p_n=\gamma^+(\pi)$. If we examine the contributions to all three statistics above according to the underlined pair, i.e., we fix some $1<i< n$, and let $j=i+1,i+2,\ldots,n$, then sum up the contributions from the triple $\pi_{i-1}\pi_i\pi_j$ (resp. $p_{i-1}p_ip_j$) to each of $\underline{13}2,\underline{21}3$ and $\underline{32}1$. We actually observe a stronger result that pairwisely the contributions are equal. We elaborate on one case below, and leave the details of the remaining cases to the interested readers.

As in the proof of Lemma~\ref{psi:des}, we decompose $\pi$ as alternating large blocks and small blocks. Whether $\pi$ is in case I or case II does not matter here. Now suppose $\pi_{i-1}$ is the last letter in $B_m$ and $\pi_i$ is the first letter in $b_m$, then $\pi_{i-1}>\pi_i$, we discuss the value of $\pi_j$ by the following four cases.
\begin{enumerate}[i]
	\item $\pi_j>\pi_{i-1}$ so $\pi_j$ must be contained in a large block. We see $\pi_{i-1}\pi_i\pi_j$ contributes $1$ to $\underline{21}3(\pi)$, while $p_{i-1}p_ip_j=\pi_{i-1}(\pi_i+n)\pi_j$ adds $1$ to $\underline{13}2(p)$.
	\item $\pi_i<\pi_j<\pi_{i-1}$ with $\pi_j$ contained in a large block or $\pi_j=\pi_n$. Then $\pi_{i-1}\pi_i\pi_j$ contributes $0$ to the sum $\underline{13}2(\pi)+\underline{21}3(\pi)+\underline{32}1(\pi)$, while $p_{i-1}p_ip_j=\pi_{i-1}(\pi_i+n)\pi_j$ adds $0$ to the sum $\underline{13}2(p)+\underline{21}3(p)+\underline{32}1(p)$.
	\item $\pi_i<\pi_j<\pi_{i-1}$ with $\pi_j$ contained in a small block. Then $\pi_{i-1}\pi_i\pi_j$ contributes $0$ to the sum $\underline{13}2(\pi)+\underline{21}3(\pi)+\underline{32}1(\pi)$, while $p_{i-1}p_ip_j=\pi_{i-1}(\pi_i+n)(\pi_j+n)$ adds $0$ to the sum $\underline{13}2(p)+\underline{21}3(p)+\underline{32}1(p)$.
	\item $\pi_j<\pi_i$ so $\pi_j$ must be contained in a small block. Then $\pi_{i-1}\pi_i\pi_j$ contributes $1$ to $\underline{32}1(\pi)$, while $p_{i-1}p_ip_j=\pi_{i-1}(\pi_i+n)(\pi_j+n)$ adds $1$ to $\underline{13}2(p)$.
\end{enumerate}

We sum up the contributions from all pairs to arrive at the desired identity and the proof is now completed.
\end{proof}

\begin{lemma}\label{psi:Id}
For any $\pi\in\SS_n$, we have $\Id(\pi)=\Id(\psi(\pi))$.
\end{lemma}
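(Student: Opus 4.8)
The plan is to trace $\Id$ through the recursive definition of $\psi$, using three elementary facts about inverse descents. Throughout I read $\Id$ as follows: $i\in\Id(p)$ if and only if the value $i+1$ occurs to the left of the value $i$ in $p$. In particular $\Id$ is invariant under any order-preserving relabeling of the alphabet (equivalently, under standardization). Two consequences will be recorded first. (a) If $u$ is a word with distinct letters and $c$ is strictly smaller than every letter of $u$, then $\Id(u\cdot c)=\{1\}\cup(\Id(u)+1)$, where $+1$ shifts each element of the set up by one; this is immediate from the reading above, since $c$ becomes the globally minimal value, placed last. (b) For any $\tau\in\SS_n$ one has $\Id(r(\tau))=[n-1]\setminus\Id(\tau)$, because reversing the one-line word complements the values of $\tau^{-1}$, turning each inverse descent into an inverse ascent and vice versa.

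The heart of the argument is the case $\pi_1>\pi_n$; write $m=\pi_n$. Here $\gamma^+$ acts on values as the cyclic relabeling $\rho(v)=((v-m)\bmod n)+1$, so the standardization of $\gamma^+(\pi)$ equals $\rho\circ\pi$ and $\Id(\gamma^+(\pi))=\Des\big((\rho\circ\pi)^{-1}\big)$ is the descent set of the cyclic rotation of $\pi^{-1}$ that begins at position $m$. The key observation is that $\pi_n=m$ forces $\pi^{-1}_m=n$, so this rotation begins with the maximal value $n$: the adjacency of $\pi^{-1}$ severed by the rotation is $(\pi^{-1}_{m-1},n)$, an ascent, hence no inverse descent is lost, while the single new ``wrap'' adjacency occupies the fixed word-position $n-m+1$. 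Consequently the elements of $\Id(\gamma^+(\pi))$ split into a block encoding $\Id(\pi)\cap[m,n-1]$, the distinguished position $n-m+1$ carrying an irrelevant wrap bit, and a block encoding $\Id(\pi)\cap[1,m-2]$ (with $m-1\notin\Id(\pi)$ automatically). Thus $\Id(\pi)$ is a function of $\Id(\gamma^+(\pi))$ and $m$ alone; in particular, for $\pi,\sigma\in\SS_n$ with $\pi_n=\sigma_n$, the equality $\Id(\gamma^+(\pi))=\Id(\gamma^+(\sigma))$ forces $\Id(\pi)=\Id(\sigma)$.

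With this in hand the case $\pi_1>\pi_n$ closes quickly. Let $\sigma=\psi(\pi)$, so $\gamma^+(\sigma)=\phi(\pre_{n-1}(\gamma^+(\pi)))\cdot m$ and, by Lemma~\ref{psi:F-L}, $\sigma_n=\pi_n=m$. Since $m$ is the smallest letter of $\gamma^+(\pi)$, fact (a) gives $\Id(\phi(\pre_{n-1}(\gamma^+(\pi)))\cdot m)=\{1\}\cup(\Id(\phi(\pre_{n-1}(\gamma^+(\pi))))+1)$, and because $\phi$ preserves $\Id$ (Theorem~\ref{thm:FHV}) this equals $\{1\}\cup(\Id(\pre_{n-1}(\gamma^+(\pi)))+1)=\Id(\gamma^+(\pi))$. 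Hence $\Id(\gamma^+(\sigma))=\Id(\gamma^+(\pi))$, and the recoverability established above, together with $\sigma_n=\pi_n$, yields $\Id(\psi(\pi))=\Id(\sigma)=\Id(\pi)$. Finally, for $\pi_1<\pi_n$ we have $\psi(\pi)=r(\psi(r(\pi)))$ with $r(\pi)_1>r(\pi)_n$, so applying fact (b) twice and the previous case to $r(\pi)$ gives $\Id(\psi(\pi))=[n-1]\setminus\Id(\psi(r(\pi)))=[n-1]\setminus\Id(r(\pi))=\Id(\pi)$. The $n=1$ case is trivial.

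I expect the main obstacle to be the value-rotation analysis of the second paragraph: one must verify cleanly that, thanks to $\pi^{-1}_m=n$, the rotation destroys only an ascent and deposits its lone new adjacency in a fixed slot, so that $\Id(\pi)$ can be decoded from $\Id(\gamma^+(\pi))$ and $m$ irrespective of the wrap bit. Everything else is bookkeeping with the two elementary facts (a) and (b) about inverse descents.
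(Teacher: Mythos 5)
Your proof is correct and takes essentially the same route as the paper's: both hinge on the chain $\Id(\gamma^+(\psi(\pi)))=\Id(\phi(\pre_{n-1}(\gamma^+(\pi)))\cdot\pi_n)=\Id(\gamma^+(\pi))$ supplied by Theorem~\ref{thm:FHV}, treat the boundary values $\pi_n$ and $\pi_n-1$ specially, and settle the case $\pi_1<\pi_n$ via $\Id(r(\sigma))=[n-1]\setminus\Id(\sigma)$. Your only departure is organizational: where the paper verifies value-by-value that $m\in\Id(\pi)\Leftrightarrow m\in\Id(\psi(\pi))$ for $\pi_n<m<n$ and $m\in\Id(\pi)\Leftrightarrow m+n\in\Id(\pre_{n-1}(\gamma^+(\pi)))$ for $m<\pi_n-1$, you package the same bookkeeping as the (correct) observation that $\Id(\gamma^+(\pi))$ is the descent set of the cyclic rotation of $\pi^{-1}$ beginning at the letter $n$, from which $\Id(\pi)$ is decodable given $\pi_n$ up to an ignorable wrap bit.
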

\begin{proof}
We first consider the case when $\pi_1>\pi_n$. In particular, this means $\pi_n<n$, so $\pi_{n}+1$ must appear to the left of $\pi_n$ in both $\pi$ and $\psi(\pi)$. Therefore we have $\pi_n\in\Id(\pi)$ and $\pi_n\in\Id(\psi(\pi))$. And if $\pi_n>1$, similarly $\pi_n-1\not\in\Id(\pi)$ and $\pi_n-1\not\in\Id(\psi(\pi))$. Now for any $\pi_n<m<n$, we see
$$m\in\Id(\pi) \Leftrightarrow m\in\Id(\pre_{n-1}(\gamma^+(\pi))) \Leftrightarrow m\in\Id(\phi(\pre_{n-1}(\gamma^+(\pi)))\cdot\pi_n)\Leftrightarrow m\in\Id(\psi(\pi)).$$
Similarly for any $1\le m<\pi_n-1$, we have
$$m\in\Id(\pi)\Leftrightarrow m+n\in\Id(\pre_{n-1}(\gamma^+(\pi)))\Leftrightarrow m+n\in\Id(\phi(\pre_{n-1}(\gamma^+(\pi)))\cdot\pi_n)\Leftrightarrow m\in\Id(\psi(\pi)).$$
This completes the proof of the $\pi_1>\pi_n$ case. For the case with $\pi_1<\pi_n$, again it suffices to notice that $\Id(r(\sigma))=[n-1]-\Id(\sigma)$.
\end{proof}

Now we are in a position to prove Theorem~\ref{thm:stat-bast}.

\begin{proof}[Proof of Theorem~\ref{thm:stat-bast}]
In view of Lemmas~\ref{psi:F-L}, \ref{psi:des} and \ref{psi:Id}, all it remains is to show that for any $\pi\in\SS_n$, we have $\BAST(\psi(\pi))=\STAT(\pi)$ and $\STAT(\psi(\pi))=\BAST(\pi)$.

We first consider the case with $\pi_1>\pi_n$, and assume that $\phi(\pre_{n-1}(\gamma^{+}(\pi)))\cdot \pi_n=\gamma^+(\sigma)$ for some $\sigma\in\SS_n$. We have
\begin{align*}
\BAST(\psi(\pi))&=\BAST(\gamma^-(\phi(\pre_{n-1}(\gamma^{+}(\pi)))\cdot \pi_n))=\BAST(\gamma^-(\gamma^+(\sigma)))=\BAST(\sigma)\\
&=\MAJ(\sigma)-n+\sigma_n+\des(\sigma) \qquad [\text{by Eq.~\eqref{231-213}}]\\
&=\MAJ(\gamma^+(\sigma))-n+1+\des(\sigma)\qquad [\text{by Eq.~\eqref{gamma:MAJ}}]\\
&=\MAJ(\phi(\pre_{n-1}(\gamma^{+}(\pi)))\cdot \pi_n)-(n-1)+\des(\gamma^+(\sigma))\qquad [\text{by Eq.~\eqref{gamma:des}}]\\
&=\MAJ(\phi(\pre_{n-1}(\gamma^{+}(\pi))))+\des(\phi(\pre_{n-1}(\gamma^{+}(\pi))))+1\\
& \qquad [\text{since $\pi_n$ is the smallest in $\phi(\pre_{n-1}(\gamma^{+}(\pi)))\cdot \pi_n$, must cause a descent.}]\\
&=\STAT(\pre_{n-1}(\gamma^+(\pi)))+\des(\pre_{n-1}(\gamma^+(\pi)))+1\qquad [\text{by Theorem~\ref{thm:FHV}}]\\
&=\STAT(\gamma^+(\pi))\\
&=\STAT(\pi). \qquad [\text{by Lemma~\ref{gamma:stat}}]
\end{align*}
The penultimate equal sign is because $\pi_n$ is the smallest in $\gamma^{+}(\pi)$, thus appending it must cause one more descent and contribute to $\underline{32}1$ for each descent pair in $\pre_{n-1}(\gamma^+(\pi))$.

Recall that $\psi$ is an involution and $\psi(\pi)_1=\pi_1>\pi_n=\psi(\pi)_n$, so we also have
$$\STAT(\psi(\pi))=\BAST(\psi(\psi(\pi)))=\BAST(\pi).$$

The next observation follows directly from the definitions of $\STAT$ amd $\BAST$.
\begin{align*}
\text{For any $\pi\in\SS_n$, }\BAST(\pi)+\STAT(r(\pi))=\BAST(r(\pi))+\STAT(\pi)=\binom{n}{2}.
\end{align*}

Lastly, for the case with $\pi_1<\pi_n$, we have
\begin{align*}
\BAST(\psi(\pi))&=\BAST(r\circ\psi\circ r(\pi))=\binom{n}{2}-\STAT(\psi(r(\pi)))\\
&=\binom{n}{2}-\BAST(r(\pi))=\STAT(\pi).
\end{align*}
Now $\psi$ being an involution again leads to $\STAT(\psi(\pi))=\BAST(\pi)$, which finishes the proof.
\end{proof}

\section{Statistics on ordered set partitions and ordered multiset partitions}\label{sec:stat on OP}
\subsection{Permutations and ordered set partitions}\label{subsec:OP}
We first recall the $q$-Eulerian number
$$
A_q(n,k)=\sum_{p\in\SS_n^k}q^{\MAJ(p)}.
$$
The following relation between $q$-Stirling numbers of the second kind and $q$-Eulerian numbers was first derived by Zeng and Zhang \cite{ZZ}, and is the key component in our proof of Theorem~\ref{main-URG}.
\begin{proposition}
For all $n$ and $k$ with $0\le k \le n$ we have
\begin{align}\label{id:Zeng-Zhang}
[k]_q!\cdot S_q(n,k)=\sum_i q^{k(k-i)}\cdot \qbin{n-i}{k-i}{q}\cdot A_q(n,i-1),
\end{align}
where $$\qbin{n}{k}{q} := \begin{cases}\dfrac{[n]_q!}{[k]_q![n-k]_q!} & \text{ for } n\geq k\geq 0,\\[15pt] 0 & \text{ otherwise, }\end{cases}$$ are the $q$-binomial coefficients.
\end{proposition}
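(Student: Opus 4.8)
The plan is to recognize both sides of the identity as the coordinates of the single polynomial $[m]_q^n$ with respect to two different bases, so that the proposition becomes nothing more than the change-of-basis relation between them, governed by the $q$-Vandermonde convolution. Concretely, I would expand $[m]_q^n$ once in the ``$q$-Stirling basis'' $\{\qbin{m}{k}{q}\}_k$ and once in the ``$q$-Eulerian (Worpitzky) basis'' $\{\qbin{m+n-1-j}{n}{q}\}_j$, then re-expand the second basis in terms of the first and match coefficients.

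First I would establish the $q$-Stirling expansion
\[
[m]_q^n=\sum_{k=0}^n [k]_q!\,S_q(n,k)\,\qbin{m}{k}{q}
\]
by induction on $n$. The base case $n=0$ is immediate. For the inductive step I would use the identity $[m]_q\qbin{m}{k}{q}=[k]_q\qbin{m}{k}{q}+q^k[k+1]_q\qbin{m}{k+1}{q}$, which follows from $[m]_q=[k]_q+q^k[m-k]_q$ together with $[m-k]_q\qbin{m}{k}{q}=[k+1]_q\qbin{m}{k+1}{q}$. Multiplying the induction hypothesis for $n-1$ by $[m]_q$ and reindexing, the coefficient of $\qbin{m}{k}{q}$ becomes $[k]_q!\bigl([k]_qS_q(n-1,k)+q^{k-1}S_q(n-1,k-1)\bigr)$, which collapses to $[k]_q!\,S_q(n,k)$ by the defining recursion \eqref{def-sq1}.

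Second I would invoke the $q$-Worpitzky identity of Carlitz,
\[
[m]_q^n=\sum_{j\ge0}A_q(n,j)\,\qbin{m+n-1-j}{n}{q},
\]
which may be cited or proved by the parallel induction using the $q$-Eulerian recursion $A_q(n,j)=[j+1]_qA_q(n-1,j)+q^j[n-j]_qA_q(n-1,j-1)$ and $q$-Pascal. I would then re-expand each term into the $q$-Stirling basis by the $q$-Vandermonde convolution in the form $\qbin{m+b}{n}{q}=\sum_k q^{k(b-n+k)}\qbin{m}{k}{q}\qbin{b}{n-k}{q}$ with $b=n-1-j$. Here the exponent becomes $k(k-1-j)$ and $\qbin{n-1-j}{n-k}{q}=\qbin{n-1-j}{k-1-j}{q}$, giving
\[
[m]_q^n=\sum_k\Bigl(\sum_j q^{k(k-1-j)}\qbin{n-1-j}{k-1-j}{q}A_q(n,j)\Bigr)\qbin{m}{k}{q}.
\]

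Finally I would compare this with the $q$-Stirling expansion. Since $\qbin{m}{k}{q}$ vanishes for $m<k$ and equals $1$ at $m=k$, the matrix $\bigl(\qbin{m}{k}{q}\bigr)_{0\le m,k\le n}$ is lower triangular with unit diagonal, so the family $\{\qbin{m}{k}{q}\}_k$ is linearly independent as functions of $m$; matching the coefficients of $\qbin{m}{k}{q}$ yields $[k]_q!\,S_q(n,k)=\sum_j q^{k(k-1-j)}\qbin{n-1-j}{k-1-j}{q}A_q(n,j)$, which is exactly the asserted identity after the substitution $i=j+1$. The main obstacle is pinning down the exact $q$-powers: the argument succeeds precisely because the $q$-Vandermonde exponent $k(b-n+k)$ specializes to $k(k-i)$ once $b=n-1-j$ and $i=j+1$, so the delicate point is getting both the Worpitzky identity and the Vandermonde convolution in their correctly $q$-weighted forms, after which the remainder is routine $q$-binomial bookkeeping.
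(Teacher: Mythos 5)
Your proposal is correct, and every delicate point checks out: the inductive proof of the $q$-Stirling expansion $[m]_q^n=\sum_k [k]_q!\,S_q(n,k)\qbin{m}{k}{q}$ via $[m]_q\qbin{m}{k}{q}=[k]_q\qbin{m}{k}{q}+q^k[k+1]_q\qbin{m}{k+1}{q}$ does collapse to the recursion \eqref{def-sq1}; the $q$-Vandermonde form $\qbin{a+b}{n}{q}=\sum_k q^{k(b-n+k)}\qbin{a}{k}{q}\qbin{b}{n-k}{q}$ is the correctly weighted one, and with $b=n-1-j$, $i=j+1$ the exponent and the symmetry $\qbin{n-1-j}{n-k}{q}=\qbin{n-i}{k-i}{q}$ reproduce exactly the weights in \eqref{id:Zeng-Zhang}; and the unitriangularity of $\bigl(\qbin{m}{k}{q}\bigr)$ at $m=0,1,\ldots,n$ legitimizes matching coefficients. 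Note, however, that the paper itself contains no proof of this proposition: it is quoted from Zeng and Zhang \cite{ZZ}, whose derivation proceeds through $q$-analogues of Newton's series. Your argument is thus a genuinely self-contained alternative, and in fact it is the classical ``double Worpitzky'' route: expand $[m]_q^n$ once in the $q$-Stirling basis and once via Carlitz's $q$-Worpitzky identity, then convert bases by $q$-Vandermonde. What this buys is elementarity (only the two recursions and $q$-binomial identities are used), at the cost of one borrowed ingredient you rightly flag: the fact that the combinatorially defined $A_q(n,j)=\sum_{p\in\SS_n^j}q^{\MAJ(p)}$ satisfies Carlitz's recursion $A_q(n,j)=[j+1]_qA_q(n-1,j)+q^j[n-j]_qA_q(n-1,j-1)$ (equivalently, that it coincides with Carlitz's recursively defined $q$-Eulerian numbers) is itself a nontrivial classical theorem of Carlitz, so it should be cited explicitly or proved by the standard insertion argument; with that citation in place, the remainder is, as you say, routine bookkeeping.
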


\begin{proof}[Proof of Theorem~\ref{main-URG}]
The plan is to show that the right-hand side of \eqref{id:Zeng-Zhang} is exactly the generating function of $\bmajBAST$ over $\URG(n,k)$, then Theorem~\ref{main-URG} follows immediately. More precisely, for a fixed $i,~1\le i\le k$, we aim to show that
$$\sum_{\widebar{p}\in\widebar{\SS}_n^{k-i,i-1}}q^{\bmajBAST(\theta(\widebar{p}))}=q^{k(k-i)}\cdot \qbin{n-i}{k-i}{q}\cdot A_q(n,i-1).$$
Then we are done by noting that $\bigsqcup_{i=1}^k\widebar{\SS}_n^{k-i,i-1}=\theta^{-1}(\URG(n,k))$.

For a given $\widebar{p}\in\widebar{\SS}_n^{k-i,i-1}$, let $\theta(\widebar{p})=w$, and suppose the active bars of $\widebar{p}$ are respectively the $a_1$-th, $a_2$-th, $\ldots~$, $a_{k-i}$-th bar among all bars, counting from left to right. We describe below the relations between various statistics, with respect to either $\widebar{p}$ or $w$. All of them should be readily checked.
\begin{itemize}
	\item The $j$-th active bar contributes $k-a_j$ to $\bMAJ(w)$.
	\item If we denote the number of empty slots to the left of the $j$-th active bar by $e_j$, then the number of letters to the left of the $j$-th active bar is $a_j+e_j$.
	\item Each fixed bar contritutes $1$ to $\des(\widebar{p})=\bndes(w)$, hence $\des(p)=\bndes(w)=i-1$.
	\item The number of letters to the left of each fixed bar adds up to $\MAJ(p)$.
	\item The number of letters to the left of each bar (either fixed or active), adds up to $\MIL(w)$.
	\item $p_n=\pl(w)$.
\end{itemize}

Now to finish the proof, it suffices to interprete each factor appropriately. We see that
\begin{align*}
q^{k(k-i)} &=q^{\sum_{j=1}^{k-i}(a_j+k-a_j)}=q^{\bMAJ(w)}\cdot q^{\sum_{j=1}^{k-i}a_j},\\
\qbin{n-i}{k-i}{q} &= \sum q^{\sum_{j=1}^{k-i}e_j},\\
A_q(n,i-1) &=\sum_{p\in\SS_n^{i-1}}q^{\BAST(p)}=\sum_{p\in\SS_n^{i-1}}q^{\MAJ(p)+p_n+\des(p)-n},
\end{align*}
where the sum in the second identity runs over all barred permutations with the same base permutation $p\in\SS_n^{i-1}$, and $k-i$ active bars inserted into the $n-i$ available empty slots, and we have applied \eqref{des-maj=des-bast} and \eqref{231-213} to derive the third one. We multiply them together to see that
$$q^{k(k-i)}\cdot \qbin{n-i}{k-i}{q}\cdot A_q(n,i-1)=\sum_{\widebar{p}\in\widebar{\SS}_n^{k-i,i-1}}q^{(\bMAJ+\MIL+\pl+\bndes)\:w-n}=\sum_{\widebar{p}\in\widebar{\SS}_n^{k-i,i-1}}q^{\bmajBAST(\theta(\widebar{p}))}.$$
The proof is now completed.
\end{proof}

Thanks to the involution $\psi$ constructed in the last section, we actually have the following stronger version.
\begin{theorem}\label{main-URG-st}
Statistics $(\Asc, \bmajMIL)$ and
$(\Asc, \bmajBAST)$ have the same joint distribution on $\URG(n,k)$ for all $n \ge k\ge 1$.
\end{theorem}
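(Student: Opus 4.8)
The plan is to transport the problem to barred permutations via the bijection $\theta$ of Lemma~\ref{lem-rgf2per}, and to isolate the contribution of the base permutation from that of the bars. Recall from the proof of Theorem~\ref{main-URG} that for $\widebar{p}\in\widebar{\SS}_n^{k-i,i-1}$ with base permutation $p\in\SS_n^{i-1}$ and $\theta(\widebar{p})=w$, decomposing $\MIL(w)$ and $\bMAJ(w)$ in terms of the active-bar positions $a_j$, the empty-slot counts $e_j$, and $\MAJ(p)$ yields
\begin{align*}
\bmajMIL(w)=k(k-i)+\textstyle\sum_j e_j+\MAJ(p),\qquad \bmajBAST(w)=k(k-i)+\textstyle\sum_j e_j+\BAST(p),
\end{align*}
where the second identity comes from the first together with \eqref{def-bmajbast} and \eqref{231-213}, which give $\bmajBAST(w)-\bmajMIL(w)=p_n+\des(p)-n=\BAST(p)-\MAJ(p)$. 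The essential point is that the ``bar part'' $k(k-i)+\sum_j e_j$ is \emph{identical} for the two statistics. Moreover, by Lemma~\ref{lem-rgf2per} we have $\Asc(w)=\Id(p)$ for every barred permutation, so the set-valued statistic $\Asc$ sees only the base permutation, not the bars.

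First I would fix a subset $S\subseteq[n-1]$ and an index $i$, and sum over all bar placements for a fixed base $p$ with $\des(p)=i-1$. Since active bars occupy $k-i$ of the $n-i$ ascent gaps of $p$, summing the common weight $q^{k(k-i)+\sum_j e_j}$ over all such placements produces the factor $q^{k(k-i)}\qbin{n-i}{k-i}{q}$, which depends only on $i$ and not on the particular $p$. Using $\Asc(w)=\Id(p)$ throughout, I obtain for each $S$
\begin{align*}
\sum_{\substack{w\in\URG(n,k)\\ \Asc(w)=S}}q^{\bmajMIL(w)}&=\sum_{i=1}^{k}q^{k(k-i)}\qbin{n-i}{k-i}{q}\sum_{\substack{p\in\SS_n^{i-1}\\ \Id(p)=S}}q^{\MAJ(p)},\\
\sum_{\substack{w\in\URG(n,k)\\ \Asc(w)=S}}q^{\bmajBAST(w)}&=\sum_{i=1}^{k}q^{k(k-i)}\qbin{n-i}{k-i}{q}\sum_{\substack{p\in\SS_n^{i-1}\\ \Id(p)=S}}q^{\BAST(p)}.
\end{align*}
Thus the theorem reduces to the purely permutational equidistribution $(\des,\Id,\MAJ)\sim(\des,\Id,\BAST)$ on $\SS_n$, which upon restricting to $\des=i-1$ and $\Id=S$ equates the two inner sums.

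To establish $(\des,\Id,\MAJ)\sim(\des,\Id,\BAST)$ on $\SS_n$, I would compose the two involutions already at hand. Viewing $\SS_n$ as the single rearrangement class of $12\cdots n$, Theorem~\ref{thm:FHV} furnishes the involution $\phi$ with $\MAJ(\phi(\sigma))=\STAT(\sigma)$ and $\STAT(\phi(\sigma))=\MAJ(\sigma)$, preserving both $\des$ and $\Id$. Our own involution $\psi$ of Theorem~\ref{thm:stat-bast} satisfies $\BAST(\psi(\sigma))=\STAT(\sigma)$ and preserves $\des$ (Lemma~\ref{psi:des}) and $\Id$ (Lemma~\ref{psi:Id}). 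Hence the composite $g:=\psi\circ\phi$ is a bijection of $\SS_n$ with $\BAST(g(\sigma))=\STAT(\phi(\sigma))=\MAJ(\sigma)$ that preserves $\des$ and $\Id$, giving $(\des,\Id,\MAJ)\,\sigma=(\des,\Id,\BAST)\,g(\sigma)$ as required; restricting $g$ to $\{p:\des(p)=i-1,\ \Id(p)=S\}$ equates the two inner sums term by term.

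The routine part is the bar/base bookkeeping, which largely repeats the proof of Theorem~\ref{main-URG}; the one point to check with care is that the bar contribution $k(k-i)+\sum_j e_j$ really is common to $\bmajMIL$ and $\bmajBAST$ (so that $\Asc=\Id$ can be carried along untouched), and that the $q$-binomial factor is independent of the chosen base $p$. The genuine content, and the main obstacle, is the permutation-level statement $(\des,\Id,\MAJ)\sim(\des,\Id,\BAST)$: it does not follow from reverse-complement symmetry and instead requires threading $\STAT$ as an intermediary, namely combining the external input $\phi$ of \cite{FHV} with the involution $\psi$ built in Section~\ref{sec:stat-bast}. Confirming that $\phi$ indeed preserves $\Id$, so that $\psi\circ\phi$ respects $\Id$, is the crucial compatibility to verify.
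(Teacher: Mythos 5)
Your proposal is correct and matches the paper's proof in all essentials: the paper likewise transports the problem through $\theta$, uses the same decompositions $\bmajMIL(w)=k(k-i)+\sum_{j}e_j+\MAJ(p)$ and $\bmajBAST(u)=k(k-i)+\sum_{j}e_j+\BAST(q)$ together with $\Asc(w)=\Id(p)$, and applies the same composite $\psi\circ\phi^{-1}=\psi\circ\phi$ to the base permutation. The only difference is presentational: the paper packages the argument as an explicit bijection $\eta$ on $\URG(n,k)$ (re-inserting the bars with the same empty-slot counts $e_j$), whereas you sum over bar placements to factor out $q^{k(k-i)}\qbin{n-i}{k-i}{q}$ --- the same idea in generating-function form.
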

\begin{proof}
It suffices to give a bijection $\eta$ on $\URG(n,k)$ which preserves $\Asc$ and sends $\bmajMIL$ to $\bmajBAST$.
Given $w \in \URG(n,k)$, we construct $u=\eta(w)$ as follows.
Let $\widebar{p}=\theta^{-1}(w)$ with
$\widebar{p}\in\widebar{\SS}_n^{k-i,i-1}$ and $e_j$ being the  number of empty slots to the left of the $j$-th active bar of $p$.
Assume that $q=\psi\circ \phi^{-1}(p)$ and let $\widebar{q}$ be the barred permutation obtained
from $q$ by the following two steps.
\begin{itemize}
  \item Step $1$. Insert $i-1$ fixed bars at the $i-1$ descents of $q$.
  \item Step $2$. Insert $k-i$ active bars at the accents of $q$,
                  with $e_j$ empty slots to the left of the $j$-th active bar of $q$ for $1 \leq j \leq k-i$. 
\end{itemize}
Finally, let $u=\eta(w)=\theta(\widebar{q})$.

By the properties of the involutions $\phi$ and $\psi$, we have  $\widebar{q}\in\widebar{\SS}_n^{k-i,i-1}$ and  $\Id(p)=\Id(q)$.
Then,  it is not hard to check that $\eta$ is a bijection on $\URG(n,k)$ and $\Asc(w)=\Asc(u)$.

From the proof of Theorem \ref{main-URG}, we may see that
\begin{align*}
  \bmajMIL(w) & =k(k-i)+ \sum_{j=1}^{k-i}e_j+\MAJ(p),\\
  \bmajBAST(u)& =k(k-i)+ \sum_{j=1}^{k-i}e_j+\BAST(q).
\end{align*}
Notice that $\MAJ(p)=\BAST(q)$, which leads to $\bmajMIL(w)=\bmajBAST(u)$, as desired.
\end{proof}
\subsection{Words and ordered multiset partitions}\label{subsec:multiset}
When we view permutations as the rearrangement class of $[n]$, then naturally the next thing to consider is to allow repetition of letters. This leads us to the rearrangement class $R(w)$ of a given word $w$, and the notion of ordered multiset partitions (see \cite{Rh}). 

A sequence of nonnegative integers, say $\beta=(\beta_1,\beta_2,\ldots)$, which satisfies $\beta_1+\beta_2+\cdots=n$, is called \emph{a weak composition of $n$}, and we denote this as $\beta\models_0 n$. For any weak composition $\beta=(\beta_1,\beta_2,\ldots)$, an \emph{ordered multiset partition of weight $\beta$ with $k$ blocks} is a sequence $\mu=B_1|\cdots|B_k$ of nonempty {\bf sets} such that $\{1^{\beta_1},2^{\beta_2},\cdots\}$ is the multiset union $B_1\cup \cdots \cup B_k$. The total number of letters $\sum_{i=1}^k|B_i|$ is called the \emph{size} of $\mu$, the sets $B_i$ are called the \emph{blocks} of $\mu$. Note that we do not allow repetition of letters within blocks in an ordered multiset partition, see \cite{HRW} and \cite{Rh} for the motivations behind this definition. Given integers $n\ge k$, a weak composition $\beta\models_0 n$, we use the following notation (see \cite{Rh}):
\begin{align*}
\OP_{\beta,k} :=\{\text{all ordered multiset partitions of weight $\beta$ with $k$ blocks}\}.
\end{align*}

Since all the statistics we consider here are concerned with the relative greatness in numerical values between letters, without loss of generality, we can assume $\beta$ is a composition, i.e., all components of $\beta$ are positive integers. For a given ordered multiset partition of weight $\beta=(\beta_1,\ldots,\beta_s)$, say $\mu=B_1|B_2|\cdots|B_k$, we form a word $\iota(\mu)=w:=u_1\cdots u_s$ by concatenating the longest subwords $u_i=j_1\cdots j_{\beta_i}$ with $j_1<\cdots<j_{\beta_i}$, such that $i\in B_{j_1},\ldots,B_{j_{\beta_i}}$. For example, let $\mu=23|12|1\in\OP_{(2,2,1),3}$, then $\iota(\mu)=23121\in\URG(5,3)$. This map $\iota$ is clearly seen to be a bijection between $\OP_{\beta,k}$, and the following subset of $\URG(n,k)$:
\begin{align*}
\URG(\beta,k):=\{w\in\URG(n,k) : [n]\subset \Asc(w)\cup\{\beta_1,\beta_1+\beta_2,\ldots,\beta_1+\cdots+\beta_{s-1}\}\}.
\end{align*}
Now we can extend all the statistics defined on $\URG(n,k)$ to the ordered multiset partitions $\OP_{\beta,k}$. In particular, we let
\begin{align*}
\Asc(\mu):=\Asc(\iota(\mu)),\; \bmajMIL(\mu):=\bmajMIL(\iota(\mu)),\; \bmajBAST(\mu):=\bmajBAST(\iota(\mu)),
\end{align*}
and the following result follows immediately.
\begin{theorem}\label{main-betaURG}
Statistics $(\Asc,\bmajMIL)$ and $(\Asc,\bmajBAST)$ have the same joint distribution on $\OP_{\beta,k}$ for any $\beta\models_0 n$ and $n\ge k\ge 1$.
\end{theorem}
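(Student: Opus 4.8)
The plan is to deduce this directly from the stronger Theorem~\ref{main-URG-st} by restricting its $\Asc$-preserving bijection to the appropriate subset. Through the bijection $\iota$ and the definitions $\Asc(\mu):=\Asc(\iota(\mu))$, $\bmajMIL(\mu):=\bmajMIL(\iota(\mu))$ and $\bmajBAST(\mu):=\bmajBAST(\iota(\mu))$, the assertion is equivalent to the equidistribution of $(\Asc,\bmajMIL)$ and $(\Asc,\bmajBAST)$ over the subset $\URG(\beta,k)\subseteq\URG(n,k)$. So I would first transport the whole problem across $\iota$ and work on $\URG(\beta,k)$.

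The crucial observation is that membership in $\URG(\beta,k)$ depends only on the statistic $\Asc$ (together with $\bk(w)=k$, which is shared by all of $\URG(n,k)$). Indeed, by the displayed characterization of $\URG(\beta,k)$, a word $w\in\URG(n,k)$ lies in $\URG(\beta,k)$ precisely when every non-boundary position is an ascent of $w$, that is, when $w$ factors into $s$ strictly increasing runs of lengths $\beta_1,\ldots,\beta_s$; equivalently, $\URG(\beta,k)=\{w\in\URG(n,k):S_\beta\subseteq\Asc(w)\}$ for the fixed position set $S_\beta$ of non-boundary indices. Hence $\URG(\beta,k)$ is a union of fibers of the map $w\mapsto\Asc(w)$. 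Next I would invoke the bijection $\eta$ constructed in the proof of Theorem~\ref{main-URG-st}, which satisfies $\Asc(w)=\Asc(\eta(w))$ and $\bmajMIL(w)=\bmajBAST(\eta(w))$ on $\URG(n,k)$. Since $\eta$ preserves $\Asc$, we have $S_\beta\subseteq\Asc(w)\Leftrightarrow S_\beta\subseteq\Asc(\eta(w))$, so $\eta$ maps $\URG(\beta,k)$ into itself; being a bijection of the finite set $\URG(n,k)$ that respects this $\Asc$-condition, its restriction is a bijection $\URG(\beta,k)\to\URG(\beta,k)$. Transporting back through $\iota$ yields a bijection on $\OP_{\beta,k}$ preserving $\Asc$ and carrying $\bmajMIL$ to $\bmajBAST$, which is exactly what is claimed.

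The only point demanding care — and essentially the sole content beyond Theorem~\ref{main-URG-st} — is verifying that the defining condition of $\URG(\beta,k)$ is genuinely a condition on $\Asc$ alone, so that an $\Asc$-preserving bijection automatically respects it. I expect this to be routine rather than a real obstacle: it amounts to rereading the characterization of $\URG(\beta,k)$ and noting that prescribing the run lengths $\beta_1,\ldots,\beta_s$ is the same as prescribing that a fixed set of positions be ascents. Once this is in hand, no new combinatorial construction is needed and the restriction of $\eta$ does all the work, which is why the result was advertised as following immediately.
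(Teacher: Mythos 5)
Your proposal is correct and matches the paper's own proof: the paper likewise observes that the bijection $\eta$ from the proof of Theorem~\ref{main-URG-st} preserves $\Asc$, hence restricts to $\URG(\beta,k)$, and then conjugates by $\iota$ to obtain the bijection $\hat{\eta}=\iota^{-1}\circ\eta\circ\iota$ on $\OP_{\beta,k}$. Your extra remark that membership in $\URG(\beta,k)$ is purely an $\Asc$-condition (a fixed set $S_\beta$ of positions must be ascents) is exactly the implicit justification the paper uses.
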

\begin{proof}
From the proof of Theorem~\ref{main-URG-st}, the bijection $\eta: \URG(n,k)\rightarrow\URG(n,k)$ preserves the set-valued statistic $\Asc$. Consequently, for any $w\in\URG(\beta,k)$, $\eta(w)$ remains in $\URG(\beta,k)$. Now we see $\hat{\eta}:=\iota^{-1}\circ\eta\circ\iota$ is a well-defined bijection from $\OP_{\beta,k}$ to itself, and for any $\mu\in\OP_{\beta,k}$, we have
\begin{align*}
\Asc(\hat{\eta}(\mu))&=\Asc(\eta\circ\iota(\mu))=\Asc(\iota(\mu))=\Asc(\mu),\\
\bmajBAST(\hat{\eta}(\mu))&=\bmajBAST(\eta\circ\iota(\mu))=\bmajMIL(\iota(\mu))=\bmajMIL(\mu).
\end{align*}
\end{proof}

In order to make connection with $\Val_{n,k}(x;q,t)$ given in the Introduction, we actually need the ``smaller'' statistics, i.e., those whose distributions on $\URG(n,k)$ generate $[k]_q!\cdot\widetilde{S}_q(n,k)$ instead of $[k]_q!\cdot S_q(n,k)$. Let us first recall that $\INV$ and $\MAJ$ are equidistributed over $\SS_n$, a classical result due to MacMahon \cite{Ma}. Wilson \cite{Wils} extended both $\INV$ and $\MAJ$ to statistics $\inv$ and $\maj$ on ordered multiset partitions and obtained the following analogous equidistribution result.
\begin{theorem}[Theorem~4.0.1 in \cite{Wils}]\label{inv-maj on OMP}
For all $n\ge k\ge 1$ and any composition $\beta\models n$,
\begin{align*}
\sum_{\mu\in\OP_{\beta,k}}q^{\inv(\mu)}=\sum_{\mu\in\OP_{\beta,k}}q^{\maj(\mu)}.
\end{align*}
\end{theorem}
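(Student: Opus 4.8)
The plan is to prove the identity by reducing it to, and then lifting, MacMahon's classical equidistribution of $\inv$ and $\maj$ over a rearrangement class, which is exactly the special case $\beta=(1^n)$, $k=n$ of the statement. I would first pin down the precise definitions of $\inv$ and $\maj$ on $\OP_{\beta,k}$ in terms of the sorted block reading word together with the block-boundary descent set, so that adding or deleting the largest letter-value can be tracked cleanly.

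The main route I would take is a generating-function recursion. Induct on the number of distinct letters, i.e.\ on the length of $\beta$: given $\mu\in\OP_{\beta,k}$ with $\beta=(\beta_1,\dots,\beta_s)$, delete every copy of the largest value $s$. Because no block may contain a repeated letter and $s$ is maximal, its $\beta_s$ copies occupy $\beta_s$ distinct blocks, each as that block's maximum; deleting them yields an ordered multiset partition of weight $(\beta_1,\dots,\beta_{s-1})$ with either $k$ blocks (if no block equalled the singleton $\{s\}$) or fewer (when some did). This produces a two-parameter recursion in $n$ and $k$, reminiscent in shape of the two-term recursion \eqref{def-sq2}, and I would show that $\sum_{\mu}q^{\inv(\mu)}$ and $\sum_{\mu}q^{\maj(\mu)}$ satisfy identical recursions with the same trivial base case $s=1$. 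Reinserting the $\beta_s$ copies of $s$ into a chosen $\beta_s$-subset of the blocks contributes a $q$-Gaussian-binomial factor, and the crux is verifying that this factor, and the accompanying power of $q$, are the same whether one bookkeeps $\inv$ or $\maj$.

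Alternatively I would construct an explicit bijection $\Phi\colon\OP_{\beta,k}\to\OP_{\beta,k}$ with $\maj(\mu)=\inv(\Phi(\mu))$, modelled on Foata's second fundamental transformation applied to the block reading word, taking care that the output is again a legal ordered multiset partition: repetition-free blocks, weight $\beta$ preserved, and exactly $k$ blocks.

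The hard part in either route is the bookkeeping forced by the set structure of the blocks. In the recursion, inserting a new largest letter into a block alters both the within-block contribution and the block-boundary descents, and one must check these changes aggregate to the same $q$-weight on the $\inv$ and on the $\maj$ side; in the bijective route, Foata's map is designed for words in a rearrangement class and does not a priori respect the partition of that word into repetition-free blocks, so it must be modified—for instance by routing the recoding through the block minima or a canonical sorted reading word—to remain inside $\OP_{\beta,k}$. This is precisely the obstacle Wilson \cite{Wils} overcomes, and it is where the genuine content beyond MacMahon's theorem lies.
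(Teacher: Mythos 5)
A preliminary point of comparison: the paper does not prove this statement at all --- it is imported as Theorem~4.0.1 of Wilson \cite{Wils}, and the paper's only contribution here is to combine it with Proposition~\ref{maj-bmajMIL} in the proof of Theorem~\ref{main-OMP}. So the correct benchmark is Wilson's proof, and your outline is in fact close in spirit to it: the deletion/insertion of all copies of the largest letter, with the observation that those copies occupy distinct blocks as block maxima (some possibly as singleton blocks $\{s\}$, which changes $k$), and Gaussian-binomial factors recording the choice of blocks upon reinsertion, is essentially the right skeleton.

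The difficulty is that your proposal defers exactly the step that constitutes the theorem. Both of your routes end by naming, rather than performing, the decisive verification: in the recursion route you write that ``the crux is verifying'' that the power of $q$ and the $q$-binomial factor agree for $\inv$ and $\maj$, and in the bijective route you concede that Foata's transformation must be ``modified'' to respect repetition-free blocks, without specifying how. Neither is routine. On the $\maj$ side (with the definition via the word $\sigma(\mu)$ and the weights $w_i$), inserting copies of $s$ has two interacting effects that must be disentangled: augmenting an existing block by $s$ places $s$ at the head of that block's decreasing run and creates a new descent weighted by the current $w$-value there, whereas creating a singleton block $\{s\}$ increments $k$, makes $s$ minimal in its block, and thereby shifts every subsequent $w_i$. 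On the $\inv$ side the same insertion changes the count of pairs involving later block minima in a different-looking way. One must show that, after summing over all admissible choices of blocks, the two weighted counts yield the same polynomial in $q$ --- this is where the $q$-binomial bookkeeping actually happens, and without it the assertion that ``both sides satisfy identical recursions'' is just a restatement of the theorem to be proved. As written, then, your text is a correct plan whose mathematical content is left blank; completing it requires carrying out the insertion-weight computation separately for each statistic and exhibiting the common recursion with common base case, which is essentially what Wilson does in \cite{Wils}.
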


We recall the definition of $\maj$ from \cite{Wils}; for $\inv$ see Remark~\ref{rmk:inv}. Suppose $\mu=B_1|B_2|\cdots$ is an ordered multiset partition, let $\sigma=\sigma(\mu)$ be the word obtained by writing letters in each block in decreasing order and deleting the bars. Next we recursively form another word $w$ by setting $w_0=0$ and $w_i=w_{i-1}+\chi(\text{$\sigma_i$ is minimal in its block in $\mu$})$ for each $i>0$. Then we set
\begin{align*}
\maj(\mu)=\sum_{i: \sigma_i>\sigma_{i+1}}w_i.
\end{align*}
We take $\mu=124|35|12$ for example. First we get $\sigma(\mu)=4215321$ and $w=00011223$, so $\maj(\mu)=w_1+w_2+w_4+w_5+w_6=0+0+1+2+2=5$.

We need to make one more observation that links $\maj$ with $\bmajMIL$, before we can prove Theorem~\ref{main-OMP}.
\begin{proposition}\label{maj-bmajMIL}
For all $n\ge k\ge 1$, any composition $\beta\models n$ and any $\mu\in\OP_{\beta,k}$, we have $$\maj(\mu)+\binom{k}{2}=\bmajMIL(\mu).$$
\end{proposition}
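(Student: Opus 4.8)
The plan is to expand both $\maj(\mu)$ and $\bmajMIL(\mu)$ into explicit sums indexed by the blocks $B_1,\dots,B_k$ of $\mu=B_1|\cdots|B_k$ and then compare them term by term; the difference will collapse to $\binom{k}{2}$ after a clean cancellation of the contributions coming from descents between consecutive blocks.

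First I would handle the right-hand side. Since $\bmajMIL(\mu)=\bMAJ(\iota(\mu))+\MIL(\iota(\mu))$ and every letter of $\iota(\mu)$ equal to $j$ records one incidence $v\in B_j$, there are exactly $|B_j|$ letters equal to $j$, whence
\[
\MIL(\iota(\mu))=\sum_{j=1}^{k}(j-1)\,|B_j|.
\]
The crux is the identity $\bMAJ(\iota(\mu))=\sum_{i=1}^{k-1} i\cdot\chi(\min B_i>\max B_{i+1})$. To establish it I would analyze the partial order $\succ$ on the letters of $w=\iota(\mu)$: the leftmost occurrence of the letter $i$ lies in the block list $u_v$ of the value $v=\min B_i$, while the rightmost occurrence of $i+1$ lies in $u_{v'}$ with $v'=\max B_{i+1}$. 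Because positions in $w$ are ordered first by value and then, within a fixed value, by increasing block index, a short case check (with the tie $\min B_i=\max B_{i+1}$ resolved by $i<i+1$, which forces the leftmost $i$ to precede the rightmost $i+1$) gives $i\succ i+1\iff\min B_i>\max B_{i+1}$. This $\succ$-computation is the step I expect to require the most care, since one must chase the tie-breaking hidden in the definitions of $\LL$, $\RR$, and $\succ$.

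Next I would compute the left-hand side directly from the definition of $\maj$. Writing each block in decreasing order to form $\sigma=\sigma(\mu)$, the minimum of a block is its last letter, so the counter satisfies $w_i=\ell-1$ at the interior positions of block $B_\ell$ and $w_{p_\ell}=\ell$ at its final position, where $p_\ell=|B_1|+\cdots+|B_\ell|$. The descents of $\sigma$ are of two kinds: the $|B_\ell|-1$ interior descents inside each decreasing block (each carrying counter value $\ell-1$), and the boundary descents between consecutive blocks, which occur exactly when $\min B_\ell>\max B_{\ell+1}$ (carrying counter value $\ell$). Summing the counter over these yields
\[
\maj(\mu)=\sum_{\ell=1}^{k}(\ell-1)(|B_\ell|-1)+\sum_{\ell=1}^{k-1}\ell\cdot\chi(\min B_\ell>\max B_{\ell+1}).
\]

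Finally I would subtract. The two boundary-descent sums are literally identical and cancel, so
\[
\bmajMIL(\mu)-\maj(\mu)=\sum_{\ell=1}^{k}(\ell-1)\big[\,|B_\ell|-(|B_\ell|-1)\,\big]=\sum_{\ell=1}^{k}(\ell-1)=\binom{k}{2},
\]
which is exactly the assertion. Apart from the $\succ$-computation, everything is routine bookkeeping, and I would verify the formulas against the running example $\mu=124|35|12$ as a sanity check before writing up the full argument.
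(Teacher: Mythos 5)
Your proof is correct and in substance the same as the paper's: both verify the identity by matching the descent contributions of $\sigma(\mu)$ against those to $\bMAJ+\MIL$, the crux in each case being that a boundary descent between blocks $B_\ell$ and $B_{\ell+1}$ occurs precisely when $\min B_\ell>\max B_{\ell+1}$, i.e.\ precisely when $\ell\succ \ell+1$ in $\iota(\mu)$, while interior descents and the remaining letters account for the $\sum_j(j-1)|B_j|$ versus $\sum_\ell(\ell-1)(|B_\ell|-1)$ discrepancy of $\binom{k}{2}$. The only difference is organizational --- the paper matches contributions position by position in three cases, whereas you aggregate into closed block-indexed formulas for $\MIL$, $\bMAJ$ and $\maj$ and subtract --- and your version has the modest merit of proving explicitly the characterization $i\succ i+1\iff\min B_i>\max B_{i+1}$, including the tie case $\min B_i=\max B_{i+1}$ settled by the increasing block order within each segment $u_v$ of $\iota(\mu)$, a point the paper's case analysis uses only tacitly.
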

\begin{proof}
Suppose $\sigma(\mu)=\sigma_1\cdots\sigma_n$, then for $i=1,\ldots,n$, we examine the contributions from the $i$-th position to both sides of the equation. We discuss by three cases.
\begin{itemize}
	\item $\sigma_i>\sigma_{i+1}$ and they both belong to the $j$-th block of $\mu$. Then the contribution to $\maj(\mu)$ is $w_i=j-1$ and the contribution to $\MIL$ is also $j-1$.
	\item $\sigma_i>\sigma_{i+1}$ and they belong to the $j$-th and the $(j+1)$-th blocks of $\mu$, respectively. Then we see all letters in the $j$-th block are larger than all letters in the $(j+1)$-th block, since in $\sigma$, letters in the same block are ordered decreasingly. Therefore the contribution to $\maj(\mu)$ is $w_i=j$ and to $\binom{k}{2}$ is $j-1$. For the right hand side, the contribution to $\bMAJ(\mu)$ is $j$ and to $\MIL$ is $j-1$.
	\item $\sigma_i\le\sigma_{i+1}$. Then they must belong to different blocks, say $\sigma_i$ is in the $j$-th block. This contributes $0$ to $\maj(\mu)$ and $j-1$ to $\binom{k}{2}$. For the right hand side, it contributes $0$ to $\bMAJ$ and $j-1$ to $\MIL$.
\end{itemize}
In all three cases above, we see the contributions to both sides of the equation are the same. The proof is completed by summing up all the contributions.
\end{proof}

\begin{proof}[Proof of Theorem~\ref{main-OMP}]
In view of Remark~\ref{rmk:inv}, we only need to prove \eqref{id2:big Val-q-0}. The second equality in \eqref{id2:big Val-q-0} follows directly from applying Theorem~\ref{main-betaURG}. For the first one, it suffices to combine Proposition~\ref{maj-bmajMIL} and Theorem~\ref{inv-maj on OMP}.
\end{proof}

\section{Final remarks}\label{sec:conclusion}
Note that $\STAT(\pi)=\BAST(\pi^{\rc})$, and $\des(\pi)=\des(\pi^{\rc})$ for any permutation $\pi$. Applying this to the first identity in Corollary~\ref{main-Sn}, we obtain the equidistribution directly between $\MAJ$ and $\STAT$, albeit on different subsets of $\SS_n$:
\begin{align}\label{id:maj-stat}
\sum\limits_{\sigma \in \SS_n^{k-1}(1\underline{32})}
    q^{\MAJ(\sigma)}
=
\sum\limits_{\sigma \in \SS_n^{k-1}(\underline{21}3)}
    q^{\STAT(\sigma)}.
\end{align}
The natural desire to refine \eqref{id:maj-stat} with further statistics, in a sense to match the result of, say Corollary~\ref{main-bijectionper}, propelled us to eventually discover Theorem~\ref{thm:stat-bast}. Although the involution $\psi$ does preserve more statistics, it unfortunately is not closed on the set $\SS_n(1\underline{32})$. For instance, $4312\in\SS_4(1\underline{32})$ but $\psi(4312)=4132\not\in\SS_4(1\underline{32})$. As a matter of fact, we do not have a simple way to characterize the image set $\psi(\SS_n(1\underline{32}))$, due to the somewhat involved construction of $\psi$.

Another thing one might be wondering, after comparing Corollary~\ref{main-bijectionper} with Theorem~\ref{thm:FHV}, is that if the stronger property, namely the pairs $(\MAJ,\BAST)$ and $(\BAST,\MAJ)$ are equidistributed on $\SS_n(1\underline{32})$ would hold. This is not ture, as witnessed by Table~\ref{MAJ-BAST} below.

To continue the theme of this paper, one might want to explore equidistributions of similar nature between $\MAJ$, $\BAST$ and $\STAT$ over other pattern-avoiding subsets of $\SS_n$. An ideal candidate would be $\SS_n(\tau)$, $\tau\in\SS_3$, which is well-known to be enumerated by the Catalan numbers. Amini \cite{Am} launched a systematic investigation in this direction. We have also accumulated some initial results to this end and they will be discussed in our future work \cite{CF}.

\begin{table}[htbp]
\centering \caption{The joint distribution of four statistics on $\SS_4(1\underline{32})$}
\begin{tabular}{|c||c|c|c|c|c|c|c|}
\hline
$\SS_4(1\underline{32})$ & $\Db$ & $\Id$ & $\MAJ$ & $\BAST$\\
\hline
1234 & \{1\} & $\emptyset$ & 0 & 0\\
\hline
2134 & \{1,2\} & \{1\} & 1 & 2\\
\hline
2314 & \{1,2\} & \{1\} & 2 & 3\\
\hline
2341 & \{1,2\} & \{1\} & 3 & 1\\
\hline
2413 & \{1,2\} & \{1,3\} & 2 & 2\\
\hline
3124 & \{1,3\} & \{2\} & 1 & 2\\
\hline
3412 & \{1,3\} & \{2\} & 2 & 1\\
\hline
3214 & \{1,2,3\} & \{1,2\} & 3 & 5\\
\hline
3241 & \{1,2,3\} & \{1,2\} & 4 & 3\\
\hline
3421 & \{1,2,3\} & \{1,2\} & 5 & 4\\
\hline
4123 & \{1,4\} & \{3\} & 1 & 1\\
\hline
4213 & \{1,2,4\} & \{1,3\} & 3 & 4\\
\hline
4231 & \{1,2,4\} & \{1,3\} & 4 & 3\\
\hline
4312 & \{1,3,4\} & \{2,3\} & 3 & 3\\
\hline
4321 & \{1,2,3,4\} & \{1,2,3\} & 6 & 6\\
\hline
\end{tabular}
\label{MAJ-BAST}
\end{table}

\section*{Acknowledgement}
The first author was supported by Natural Science Foundation Project
of Tianjin Municipal Education Committee (No.~2017KJ243,~No.~2018KJ193)
and the National Natural Science Foundation of China (No.~11701420).
The second author was supported by the Fundamental Research Funds for the Central Universities (No.~2018CDXYST0024) and the National Natural Science Foundation of China (No.~11501061).

\end{document}